\documentclass[a4paper, 10pt, notitlepage]{article}

\usepackage{amsthm, amsmath, amssymb, latexsym}
\usepackage{mathrsfs,cite}
\usepackage[multiple]{footmisc}
\usepackage{graphicx}

\theoremstyle{plain}
\newtheorem{theorem}{Theorem}[section]	
\newtheorem{lemma}[theorem]{Lemma}	
\newtheorem{corollary}[theorem]{Corollary}

\theoremstyle{definition}
\newtheorem{definition}[theorem]{Definition}
\newtheorem{example}[theorem]{Example}

\theoremstyle{remark}
\newtheorem{remark}[theorem]{Remark}

\usepackage{chngcntr}
\counterwithin{theorem}{section}	

\usepackage[a4paper]{geometry}	

\geometry{left=2cm}
\geometry{right=2cm}
\geometry{top=2cm}
\geometry{bottom=30mm}

\numberwithin{equation}{section}

\DeclareMathOperator{\xWsp}{\textrm{W}}
\DeclareMathOperator{\xL}{\textrm{L}}
\DeclareMathOperator{\xR}{\mathbb{R}}
\DeclareMathOperator{\xC}{\textrm{C}}

\DeclareMathOperator{\co}{co}
\DeclareMathOperator{\cl}{cl}
\DeclareMathOperator{\cone}{cone}

\DeclareMathOperator*{\esssup}{ess\,sup}
\DeclareMathOperator{\dist}{dist}
\DeclareMathOperator{\diverg}{div}

\author{M.V. Dolgopolik\footnote{Institute for Problems in Mechanical Engineering, Russian Academy of Sciences,
Saint Petersburg, Russia}}
\title{Constrained Nonsmooth Problems of the Calculus of Variations}

\begin{document}

\maketitle

\begin{abstract}
The paper is devoted to an analysis of optimality conditions for nonsmooth multidimensional problems of the calculus of
variations with various types of constraints, such as additional constraints at the boundary and isoperimetric
constraints. To derive optimality conditions, we study generalised concepts of differentiability of nonsmooth functions
called codifferentiability and quasidifferentiability. Under some natural and easily verifiable assumptions we prove
that a nonsmooth integral functional defined on the Sobolev space is continuously codifferentiable and compute its
codifferential and quasidifferential. Then we apply general optimality conditions for nonsmooth optimisation problems in
Banach spaces to obtain optimality conditions for nonsmooth problems of the calculus of variations. Through a series of
simple examples we demonstrate that our optimality conditions are sometimes better than existing ones in terms of
various subdifferentials, in the sense that our optimality conditions can detect the non-optimality of a given point,
when subdifferential-based optimality conditions fail to disqualify this point as non-optimal.
\end{abstract}

\section*{Introduction}

Nonsmooth problems of the calculus of variations arise in various applications, such as optimisation of hydrothermal
systems \cite{BayonGrau2006,BayonGrau2009,BayonGrau2014} and nonsmooth modelling in mechanics and engineering (see
monograph \cite{QuasidiffMechanics}). Their theoretical study was started by Rockafellar in the convex
case in \cite{Rockafellar70,Rockafellar70_2,Rockafellar71}, where some existence and duality results, as well as
optimality conditions in terms of subdifferentials, were obtained. In these optimality conditions the classical
Euler-Lagrange equation and transversality condition for the problem of Bolza
\begin{equation} \label{eq:IntroProblem}
  \min \: l(x(0), x(T)) + \int_0^T L(t, x(t), \dot{x}(t)) \, dt
\end{equation}
were replaced by the following inclusions:
$$
  (\dot{p}(t), p(t)) \in \partial L(t, x(t), \dot{x}(t)) \quad \text{for a.e. } t \in (0, T), \quad
  (p(0), - p(T)) \in \partial l(x(0), x(T),
$$
where ``$\partial$'' stands for subdifferential in the sense of convex analysis \cite{IoffeTihomirov,EkelandTemam}.
Note that if the function $L$ is differentiable, then $p(t) = L'_{\dot{x}}(t, x(t), \dot{x}(t))$ and the first
inclusion is reduced to the Euler-Lagrange equation.

Further research was devoted to relaxing the convexity assumptions made by Rockafellar and replacing the
subdifferential in the sense of convex analysis by some other subdifferential defined for nonconvex functions.
Important steps in this direction were made by Clarke \cite{Clarke75,Clarke76,Clarke80,Clarke}, who studied problems
with locally Lipschitz continuous functions $l$ and $L$ and replaced the subdifferentials in the sense of convex
analysis with what now is known as the Clarke subdifferential. Apart from optimality conditions in the form of the
Euler-Lagrange inclusion, Clarke also obtained optimality conditions in the Hamiltonian form. Clarke's results were
sharpened and extended to more general variational problems by Loewen and Rockafellar
\cite{LoewenRockafellar91,LoewenRockafellar94}, while equivalence between Euler-Lagrange and Hamiltonian forms of
optimality conditions for nonsmooth variational problems was studied in
\cite{Clarke93,Rockafellar93,Ioffe97,BessisLedyaevVinter2001}. Nonlocal optimality conditions for nonconvex problems of
the calculus of variations in terms of subdifferentials in the sense of convex analysis and their connections to the
existence of minimisers were studied by Marcelli et al.
\cite{MarchelliOutkin,Marcelli2002,CupiniMarchelli,Marcelli2008}.

First optimality conditions for nonsmooth variational problems involving \textit{nonconvex} subdifferentials were
obtained by Mordukhovich \cite{Mordukhovich_book,Mordukhovich95} (see also \cite{Mordukhovich_I,Mordukhovich_II}).
Later, optimality conditions for a nonsmooth problem of Bolza in terms of limiting proximal and limiting Fr\'{e}chet
subdifferentials were studied by Loewen and Rockafellar \cite{LoewenRockafellar96,LoewenRockafellar97,Loewen}, Ioffe
and Rockafellar \cite{IoffeRockafellar}, Vinter and Zheng \cite{VinterZheng97, Vinter}, Bellaassali \cite{Bellaassali},
and Jourani \cite{Jourani2009}.

A different approach to an analysis of optimality conditions for nonsmooth problems of the calculus of variations based
on the use of \textit{codifferentials} was developed by the author in \cite{Dolgopolik_CalcVar}. Codifferentials of
nonsmooth functions were introduced by Demyanov \cite{Demyanov_InCollection_1988,Demyanov1988,Demyanov1989} in the late
1980s. A general theory of codifferentiable functions, closely related to the theory of Demyanov-Rubinov-Polyakova
quasidifferentials \cite{DemyanovDixon,QuasidiffMechanics,Quasidifferentiability_book}, was developed in the finite
dimensional case in \cite{DemRub_book}. Its infinite dimensional generalisations were studied in
\cite{Zaffaroni98,Zaffaroni,Dolgopolik_CodiffCalc,Dolgopolik_AbstrConvApprox,Dolgopolik_CodiffDescent}. 
In \cite{Dolgopolik_CalcVar} it was shown that optimality conditions for problem \eqref{eq:IntroProblem} in terms of
codifferentials are sometimes better than subdifferential-based optimality conditions. Let us also mention a completely
different approach to the derivation of optimality conditions and numerical solution of nonsmooth problems of the
calculus of variations based on the Chebyshev pseudospectral method \cite{NooriKamyad2014}.

It should be noted that in most of the aforementioned papers nonsmooth problems of the calculus of variations were not
studied by themselves, but in the context of variational problems for differential inclusions. It seems that since 
the mid-90s nonsmooth problems of the calculus of variations became just an auxiliary tool for the derivation of
optimality conditions for nonsmooth optimal control problems and nonsmooth variational problems involving differential
inclusions (cf.~\cite{Jourani2009,Ioffe2016,Ioffe2019}). As a result, relatively little attention has been paid to
nonsmooth multidimensional problems of the calculus of variations, as well as problems with additional constraints, such
as nonsmooth isoperimetric problems and problems with additional constraints at the boundary. Nonsmooth multidimensional
problems of the calculus of variations were first studied by Clarke \cite{Clarke77_Multiple} for locally Lipschitz
continuous integrands. Improved versions of the Clarke's first result were later published in monographs
\cite{Clarke,Clarke2013}, while Bousquet \cite{Bousquet} showed that one can significantly relax the growth conditions
on the integrand imposed in the Clarke's work. Bonfanti and Cellina \cite{BonfantiCellina} obtained optimality
conditions for the problem
\[
  \min \: \int_{\Omega} L(x, v(x), \nabla v(x)) \, dx \quad 
  \text{subject to} \quad v \in v_0 + W_0^{1, 1}(\Omega)
\]
in the case when the integrand $L(x, v, \xi)$ is differentiable in $v$ and convex in $\xi$. Optimality conditions for
nonsmooth multidimensional problems of the calculus of variations in terms of codifferentials were obtained in the
author's paper \cite{Dolgopolik_CalcVar}, while optimality conditions for such problems in terms of the so-called
K-subdifferential were obtained in \cite{OrlovTsygankova}. Finally, nonsmooth variational problems with additional
constraints have been explicitly studied only by Clarke \cite{Clarke,Clarke2013} and Bellaassali \cite{Bellaassali}. 

The main goal of this paper is to present a general theory of necessary optimality conditions for nonsmooth
multidimensional problems of the calculus of variations with various types of additional constraints, such as problems
with constraints at the boundary and problems with isoperimetric constraints. To this end, we significantly improve our
earlier results from \cite{Dolgopolik_CalcVar} and prove the codifferentiability of a nonsmooth integral functional
defined on the Sobolev space under natural and easily verifiable assumptions on the integrand. In comparison with our
previous paper \cite{Dolgopolik_CalcVar}, we get rid of the obscure and hard to verify assumption on the uniform
codifferentiability of the integrand with respect to the Sobolev space and do not impose any assumptions on the domain
of integration, thus extending the results of \cite{Dolgopolik_CalcVar} to the case of unbounded domains and domains
with irregular boundary (see Section~\ref{sec:CodiffIntegralFunc} for more details). Furthermore, under natural
assumptions we prove the continuity of a codifferential of the integral functional in the general case (in
\cite{Dolgopolik_CalcVar} the continuity was proved only in the case $p = + \infty$). Continuity is an important
property for an analysis of discretisation of variational problems, approximation methods, and convergence of numerical
methods. In particular, in the general case the continuity of codifferential is necessary for the global convergence of
optimisation methods based on codifferentials \cite{Dolgopolik_CodiffDescent}.

With the use of the general result on the codifferentiability of an integral functional obtained in this paper and
necessary optimality conditions for nonsmooth mathematical programming problems in Banach spaces in terms of
quasidfferentials from \cite{Dolgopolik_QuasidiffProg,Dolgopolik_MetricReg} we derive optimality conditions for
unconstrained nonsmooth problems of the calculus of variations, as well as problems with additional constraints at the
boundary and isoperimetric constraints. Each of these optimality conditions is illustrated by a simple example, in
which existing optimality conditions in terms of various subdifferentials are satisfied at a non-optimal point, while
our optimality conditions are able to detect the non-optimality of this point. Thus, the optimality conditions obtained
in this paper are in some cases better than existing subdifferential-based optimality conditions.

The paper is organised as follows. The codifferentiability of an integral functional defined on the Sobolev space is
studied in Section~\ref{sec:CodiffIntegralFunc}. Section~\ref{sec:CalcVar} is devoted to derivation of necessary
optimality conditions for constrained nonsmooth problems of the calculus of variations in terms of codifferentials. 
This section also contains several examples illustrating advantages of optimality conditions in terms of codifferentials
in comparison with subdifferential-based optimality conditions. Finally, Section~\ref{sec:Preliminaries} contains
some auxiliary definitions from nonsmooth analysis that are necessary for understanding the paper (apart from
Examples~\ref{example:UnconstrainedProblem}, \ref{example:BoundaryConstrainedProblem}, and
\ref{example:IsoperimetricProblem}, whose understanding requires some familiarity with the Clarke subdifferential
\cite{Clarke}, the limiting proximal subdifferential \cite{Vinter,ClarkeLedyaev}, and the limiting Fr\'{e}chet
subdifferential \cite{Mordukhovich_I,Mordukhovich_II}).

\section{Codifferentiable and Quasidifferentiable Functions}
\label{sec:Preliminaries}

In what follows, let $X$ be a real Banach space. Its topological dual space is denoted by $X^*$, while the
canonical duality pairing between $X$ and $X^*$ is denoted by $\langle \cdot, \cdot \rangle$, i.e.
$\langle x^*, x \rangle = x^*(x)$ for all $x^* \in X^*$ and $x \in X$. The standard topology on $\xR$ is denoted
by $\tau_{\xR}$ and the weak$^*$ topology on $X^*$ is denoted by $w^*$ or $\sigma(X^*, X)$.

We equip the Cartesian product $\xR \times X$ with the norm $\| (a, x) \| = \sqrt{|a|^2 + \| x \|^2}$ for all
$(a, x) \in \xR \times X$. It is easily seen that the topological dual space $(\xR \times X)^*$ endowed
with the weak${}^*$ topology is isomorphic (in the category of topological vector spaces) to the space 
$\xR \times X^*$ endowed with the product topology $\tau_{\xR} \times w^*$. Utilising this fact (or
arguing directly) one can check that a subset of the topological vector space 
$(\xR \times X^*, \tau_{\xR} \times w^*)$ is compact if and only if it is closed in the topology 
$\tau_{\xR} \times w^*$ and bounded with respect to the norm 
$\| (a, x^*) \| = \sqrt{|a|^2 + \| x^* \|^2}$, $(a, x^*) \in \xR \times X^*$
(see, e.g. \cite[Thrm.~2.1]{Dolgopolik_CodiffCalc}).

\begin{definition} \label{def:CodiffFunc}
Let $U \subseteq X$ be an open set. A function $f \colon U \to \xR$ is called \textit{codifferentiable} at a
point $x \in U$, if there exists a pair $D f(x) = [\underline{d} f(x), \overline{d} f(x)]$ of convex sets 
$\underline{d} f(x), \overline{d} f(x) \subset \xR \times X^*$ that are compact in the product topology 
$\tau_{\xR} \times w^*$ and satisfy the equalities $\Phi_f(x, 0) = \Psi_f(x, 0) = 0$ and
\begin{equation} \label{eq:CodiffDef}
  \lim_{\alpha \to +0} \frac{1}{\alpha } \Big| f(x + \alpha \Delta x) - f(x)
  - \Phi_f(x, \alpha \Delta x) - \Psi_f(x, \alpha \Delta x) \Big| = 0
  \quad \forall \Delta x \in X,
\end{equation}
where
\begin{equation} \label{eq:ConvexConcaveApprox}
  \Phi_f(x, \Delta x) = \max_{(a, x^*) \in \underline{d} f(x)} (a + \langle x^*, \Delta x \rangle), \quad
  \Psi_f(x, \Delta x) = \min_{(b, y^*) \in \overline{d} f(x)} (b + \langle y^*, \Delta x \rangle)
  \quad \forall \Delta x \in X.
\end{equation}
The pair $D f(x) = [\underline{d} f(x), \overline{d} f(x)]$ is called a \textit{codifferential} of $f$ at $x$,
the set $\underline{d} f(x)$ is called a \textit{hypodifferential} of $f$ at $x$, while the
set $\overline{d} f(x)$ is referred to as a \textit{hyperdifferential} of $f$ at $x$.
\end{definition}

\begin{remark}
Note that the equalities $\Phi_f(x, 0) = \Psi_f(x, 0) = 0$ simply mean that $a \le 0$ for all 
$(a, x^*) \in \underline{d} f(x)$ and $\max_{(a, x^*) \in \underline{d} f(x)} a = 0$, while
$b \ge 0$ for all $(b, y^*) \in \overline{d} f(x)$ and $\min_{(b, y^*) \in \overline{d} f(x)} b = 0$. Note also that
the maximum in the definition of $\Phi_f$ and the minimum in the definition of $\Psi_f$ are attained due to the fact
that the sets $\underline{d} f(x)$ and $\overline{d} f(x)$ are compact in the product topology $\tau_{\xR} \times w^*$.
\end{remark}

Let us comment on the definition of codifferentiability. Observe that the function $\Phi_f(x, \cdot)$ from this
definition is convex, while the function $\Psi_f(x, \cdot)$ is concave. Thus, in the definition of codifferentiable
function  one approximates the increment of a nonsmooth function $f$ with the use of the DC (difference-of-convex)
function $\Phi_f(x, \cdot) + \Psi_f(x, \cdot)$ (see~\eqref{eq:CodiffDef}). One can check that $f$ is codifferentiable at
a point $x$ if and only if its increment $f(x + \alpha \Delta x) - f(x)$ can be approximated in this way by
\textit{some} continuous DC function (see~\cite[Example~3.10]{Dolgopolik_AbstrConvApprox}). Hence, in particular, any
continuous DC function $f \colon X \to \xR$ is codifferentiable at every point $x \in X$. Let us note that the benefit
of using codifferentiable functions in comparison with DC functions consists in the existence of a well-developed
codifferential calculus, which allows one to easily compute codifferentials of many nonsmooth functions appearing in
applications (see, e.g. monograph \cite{DemRub_book}). In contrast, while it is usually fairly easy to prove
theoretically that a given function is DC, in some cases it might be very problematic to find an explicit DC
representation of a DC function.

Observe that codifferential is not uniquely defined. For instance, it is easily seen that if $D f(x)$ is a 
codifferential of $f$ at $x$, then for any convex compact subset $C$ of the space 
$(\xR \times X^*, \tau_{\xR} \times w^*)$ the pair $[\underline{d} f(x) + C, \overline{d} f(x) - C]$ is a
codifferential of $f$ at $x$ as well.

Recall that for any two nonempty subsets $A$ and $B$ of a metric space $(M, d)$ \textit{the Hausdorff metric} 
$d_H(A, B)$ is defined by
\begin{equation} \label{eq:HausdorffDist}
  d_H(A, B) = \max\{ \sup_{x \in A} \inf_{y \in B} d(x, y), \sup_{y \in B} \inf_{x \in A} d(x, y) \}.
\end{equation}
A multifunction $F$ between metric spaces $(M_1, d_1)$ and $(M_2, d_2)$ is called \textit{Hausdorff continuous} (or
continuous with respect to the Hausdorff metric) at a point $x \in M_1$, if for any $\varepsilon > 0$ one can find
$\delta > 0$ such that for all $y \in M_1$ with $d_1(y, x) < \delta$ one has $d_H(F(y), F(x)) < \varepsilon$.
Let, as above, $U \subseteq X$ be an open set.

\begin{definition}
A function $f \colon U \to \xR$ is said to be \textit{continuously codifferentiable} at a point $x \in U$, if $f$
is codifferentiable at every point in a neighbourhood $V \subseteq U$ of $x$ and there exists a codifferential mapping 
$D f(\cdot)$ defined in $V$ and such that the corresponding set-valued mappings $\underline{d} f(\cdot)$ and
$\overline{d} f(\cdot)$ are Hausdorff continuous at $x$ (in this case we say that $D f(\cdot)$ is Hausdorff continuous
at $x$). Finally, $f$ is called continuously codifferentiable on a set $A \subset U$, if it is codifferentiable at every
point of this set and there exists a codifferential mapping $D f(\cdot)$ defined on $A$ and such that the corresponding
mappings $\underline{d} f(\cdot)$ and $\overline{d} f(\cdot)$ are Hausdorff continuous on $A$.
\end{definition}

The class of continuously codifferentiable functions is closed under addition, multiplication, pointwise maximum and
minimum of finite families of functions, and composition with smooth functions 
(see \cite{DemRub_book,Dolgopolik_CodiffCalc,Dolgopolik_AbstrConvApprox,Dolgopolik_CodiffDescent} for more details). 

\begin{remark}
Let us note that in the nonsmooth case the standard necessary optimality condition $\nabla f(x) = 0$ takes the form of
set-theoretic inclusion $0 \in \partial f(x)$, involving some subdifferential of the objective function $f$. For
codifferentiable functions, necessary optimality conditions are formulated as set-theoretic inclusions involving hypo-
and hyperdifferentials:
$$
  0 \in \underline{d} f(x) + (0, y^*) \quad \forall (0, y^*) \in \overline{d} f(x).
$$
As a result, in the nonsmooth case optimality conditions no longer play the role of an equation for finding minimisers
and are often used only for verifying whether a given point is optimal and constructing optimisation methods. As we
will see in the following sections, the situation is precisely the same in the case of nonsmooth problems of the
calculus of variations. For nonsmooth variational problems, the Euler-Lagrange equation, which in the smooth case can be
used to find potential extremals, is replaced by a certain inclusion. This inclusion no longer allows one to directly
find extremals and can usually be used only to check whether a given point is potentially optimal and to study
numerical procedures for finding potential extremals.
\end{remark}

The class of codifferentiable nonsmooth functions is closely related to the class of quasidifferentiable functions.
Recall that a function $f \colon U \to \xR$ is called \textit{quasidifferentiable} at a point $x \in U$, if $f$
is directionally differentiable at $x$, i.e. for any $v \in X$ there exists the finite limit
$$
  f'(x, v) = \lim_{\alpha \to + 0} \frac{f(x + \alpha v) - f(x)}{\alpha},
$$
and the function $f'(x, \cdot)$ can be represented as the difference of continuous sublinear functions or, equivalently,
if there exists a pair $\mathscr{D} f(x) = [\underline{\partial} f(x), \overline{\partial} f(x)]$ of convex weak${}^*$
compact sets $\underline{\partial} f(x), \overline{\partial} f(x) \subset X^*$ such that
$$
  f'(x, v) = \max_{x^* \in \underline{\partial} f(x)} \langle x^*, v \rangle
  + \min_{y^* \in \overline{\partial} f(x)} \langle y^*, v \rangle \quad \forall v \in X.
$$
The set $\underline{\partial} f(x)$ is called a \textit{subdifferential} of $f$ at $x$, while the set
$\overline{\partial} f(x)$ is referred to as a \textit{superdifferential} of $f$ at $x$. Note that, just like
codifferential, quasidifferential is not uniquely defined. The interesting problem of finding a minimal (in some
sense) quasidifferential of a given nonsmooth function was studied in 
\cite{Handschug,Scholtes92,PallaschkeUrbanski93,GrzybowskiUrbanski97,Gao98,GrzybowskiUrbanski2006,Grzybowski2008,
Grzhybowski2010,Pallaschke2012}.

\begin{remark}
In what follows, we denote a codifferential of a nonsmooth function $f$ at a point $x$ by $D f(x)$, while 
a quasidifferential of this function is denoted by $\mathscr{D} f(x)$.
\end{remark}

Finally, one says that a function $f \colon U \to \xR$ is \textit{Hadamard quasidifferentiable at} $x$, if 
$f$ is quasidifferentiable at $x$ and Hadamard directionally differentiable at this point, that is,
\begin{equation} \label{eq:HadamardDer_Def}
  f'(x, v) = \lim_{[\alpha, v'] \to [+0, v]} \frac{f(x + \alpha v') - f(x)}{\alpha}
\end{equation}
(see \cite{Giannessi} for a discussion of the notation under this limit). In other words, for any $\varepsilon > 0$ and
$v \in X$ one can find $\delta > 0$ such that $| (f(x + \alpha v') - f(x)) / \alpha - f'(x, v) | < \varepsilon$
for all $0 < \alpha < \delta$ and $v' \in B(v, \delta) = \{ v' \in X \mid \| v' - v \| \le \delta \}$. We will need the
following result stating that every continuously codifferentiable function is, in fact, Hadamard quasidifferentiable and
indicating how one can compute a quasidifferential of this function. A relationship between continuously
codifferentiable functions and quasidifferentiable functions having outer semicontinuous subdifferential and
superdifferential mappings $\underline{\partial} f(\cdot)$ and $\overline{\partial} f(\cdot)$ was analysed in the finite
dimensional case by Kuntz \cite{Kuntz}.

\begin{lemma} \label{lem:ContCodiffQuasidiffUniform}
Let a function $f \colon U \to \xR$ be continuously codifferentiable at a point $x \in U$. Then $f$ is Hadamard
quasidifferentiable at this point and for any codifferential $D f(x)$ of $f$ at $x$ the pair 
$\mathscr{D} f(x) = [\underline{\partial} f(x), \overline{\partial} f(x)]$ defined by
\begin{equation} \label{eq:QuasidiffOfCodiffFunc}
  \underline{\partial} f(x) = \big\{ x^* \in X^* \bigm| (0, x^*) \in \underline{d} f(x) \big\}, \quad
  \overline{\partial} f(x) = \big\{ y^* \in X^* \bigm| (0, y^*) \in \overline{d} f(x) \big\}
\end{equation}
is a quasidifferential of $f$ at $x$.
\end{lemma}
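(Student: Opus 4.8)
The plan is to prove two things: that $f$ is directionally differentiable at $x$ with $f'(x,\cdot)$ equal to the claimed difference of sublinear functions, and that the difference quotients converge in the stronger Hadamard sense \eqref{eq:HadamardDer_Def}. I would obtain the first from codifferentiability at $x$ alone, and the second by invoking local Lipschitz continuity (Corollary~\ref{crl:CodiffFuncLocLip}) to upgrade ordinary directional differentiability to Hadamard directional differentiability. Fix an arbitrary codifferential $Df(x)=[\underline{d}f(x),\overline{d}f(x)]$ and a direction $v\in X$. Dividing the defining relation \eqref{eq:CodiffDef} by $\alpha>0$ and using that the bracketed error is $o(\alpha)$, the computation of
\[
  \lim_{\alpha\to+0}\frac{f(x+\alpha v)-f(x)}{\alpha}
\]
reduces to the two separate limits of $\Phi_f(x,\alpha v)/\alpha$ and $\Psi_f(x,\alpha v)/\alpha$ as $\alpha\to+0$, which I treat individually.

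The core of the argument is the limit of $\Phi_f(x,\alpha v)/\alpha=\max_{(a,x^*)\in\underline{d}f(x)}(a/\alpha+\langle x^*,v\rangle)$. Set $c:=\max_{x^*\in\underline{\partial}f(x)}\langle x^*,v\rangle$. The lower bound $\Phi_f(x,\alpha v)/\alpha\ge c$ holds for every $\alpha>0$, since by the remark following Definition~\ref{def:CodiffFunc} the value $a=0$ is attained on $\underline{d}f(x)$, and the slice at $a=0$ is admissible in the maximum and contributes exactly $\langle x^*,v\rangle$. The main obstacle is the matching upper bound, i.e. showing that the points with $a<0$ become irrelevant in the limit. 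Given $\varepsilon>0$, I would consider the $w^*$-open slab $O=\{(a,x^*):\langle x^*,v\rangle<c+\varepsilon\}$; it contains the slice $\{0\}\times\underline{\partial}f(x)$ by definition of $c$, so $\underline{d}f(x)\setminus O$ is a compact subset on which $a<0$, whence $a\le-\eta_0$ there for some $\eta_0>0$ by continuity of the coordinate $a$ in $\tau_{\xR}\times w^*$ and compactness (the case $\underline{d}f(x)\subseteq O$ being trivial). Since $\underline{d}f(x)$ is norm-bounded, one has $|\langle x^*,v\rangle|\le M\|v\|$ on it; hence on $\underline{d}f(x)\setminus O$ the quantity $a/\alpha+\langle x^*,v\rangle\le-\eta_0/\alpha+M\|v\|$ drops below $c$ once $\alpha$ is small, while on $O\cap\underline{d}f(x)$ it is $\le\langle x^*,v\rangle<c+\varepsilon$ because $a\le0$. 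This gives $\Phi_f(x,\alpha v)/\alpha<c+\varepsilon$ for all small $\alpha$, and since $\varepsilon$ is arbitrary, $\lim_{\alpha\to+0}\Phi_f(x,\alpha v)/\alpha=c$. I stress that this step is done with open sets and continuity rather than convergent subsequences, precisely because the $w^*$ topology need not be sequential. A symmetric argument (now using $b\ge0$ and the attainment of $\min b=0$) yields $\lim_{\alpha\to+0}\Psi_f(x,\alpha v)/\alpha=\min_{y^*\in\overline{\partial}f(x)}\langle y^*,v\rangle$.

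Combining the three limits establishes directional differentiability and the quasidifferential formula. To complete the proof I would verify that $\underline{\partial}f(x)$ and $\overline{\partial}f(x)$ are the projections onto $X^*$ of the slices of $\underline{d}f(x)$ and $\overline{d}f(x)$ at $a=0$ and $b=0$; these slices are closed subsets of compact sets, hence compact, are nonempty by attainment of $\max a=0$ and $\min b=0$, and are convex as intersections of convex sets with a hyperplane, so their projections are convex $w^*$-compact nonempty subsets of $X^*$ — exactly the data of a quasidifferential. Finally, continuous codifferentiability gives, via Corollary~\ref{crl:CodiffFuncLocLip}, a Lipschitz constant $L$ and a neighbourhood of $x$ on which $f$ is Lipschitz; then for $v'$ near $v$ and small $\alpha$ the estimate $\big|(f(x+\alpha v')-f(x))/\alpha-(f(x+\alpha v)-f(x))/\alpha\big|\le L\|v'-v\|$ shows the difference quotients are uniformly in $\alpha$ close, so the directional derivative just computed is in fact the Hadamard directional derivative. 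This yields Hadamard quasidifferentiability with the stated quasidifferential.
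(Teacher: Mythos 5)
Your proposal is correct, and its overall skeleton matches the paper's: reduce the difference quotient to that of the DC approximation $\Phi_f(x,\cdot)+\Psi_f(x,\cdot)$, compute the directional derivative of each piece at the origin, and then upgrade to Hadamard differentiability via the local Lipschitz continuity from Corollary~\ref{crl:CodiffFuncLocLip}. Where you genuinely diverge is the central step of evaluating $\lim_{\alpha\to+0}\Phi_f(x,\alpha v)/\alpha$. The paper treats $\Phi_f(x,\cdot)$ as a convex function bounded on bounded sets and imports standard convex analysis: continuity, subdifferentiability, the identification $\partial\Phi_f(x,\cdot)(0)=\{x^*\mid(0,x^*)\in\underline{d}f(x)\}$ from \cite[Thrm.~4.2.3]{IoffeTihomirov}, and the max-formula for the directional derivative of a continuous convex function. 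You instead give a direct, self-contained compactness argument: the lower bound $\Phi_f(x,\alpha v)/\alpha\ge c$ from the nonempty slice at $a=0$, and the matching upper bound by covering the slice with the $w^*$-open slab $O=\{(a,x^*):\langle x^*,v\rangle<c+\varepsilon\}$, extracting a uniform bound $a\le-\eta_0<0$ on the compact complement, and letting $a/\alpha\to-\infty$ kill those points. Your explicit care to argue with open sets and continuity rather than sequences is exactly right here, since $(X^*,w^*)$ need not be sequential. What the paper's route buys is brevity and a packaged identification of the subdifferential; what yours buys is independence from the cited convex-analysis results and a transparent view of why the $a<0$ points drop out in the limit. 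Your additional verification that the slices at $a=0$ and $b=0$ are nonempty, convex and $w^*$-compact (so that \eqref{eq:QuasidiffOfCodiffFunc} really is a quasidifferential) is a point the paper leaves implicit in its citation, and it is worth stating. The Hadamard upgrade you describe is the same two-term estimate the paper uses; to make it airtight you should, as the paper does, first fix the ball $B(x,r)$ of Lipschitz continuity and choose $\delta$ small enough that $x+\alpha v'$ and $x+\alpha v$ both lie in it for $\alpha\in(0,\delta)$ and $v'\in B(v,\delta)$, treating $v=0$ separately if desired.
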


\begin{proof}
Let $D f(x)$ be any codifferential of $f$ at $x$ and the functions $\Phi_f$ and $\Psi_f$ be defined as in
\eqref{eq:ConvexConcaveApprox}. Denote $g(\cdot) = \Phi_f(x, \cdot) + \Psi_f(x, \cdot)$. Recall that by the definition
of codifferential one has $g(0) = 0$ (see Def.~\ref{def:CodiffFunc}).

Suppose at first that $g$ is directionally differentiable at zero and fix any $v \in X$ and $\varepsilon > 0$. Then
there exists $\delta_1 > 0$ such that $| g(\alpha v) / \alpha - g'(0, v) | < \varepsilon / 2$ for 
all $\alpha \in (0, \delta_1)$. By the definition of codifferential there exists $\delta_2 > 0$ such that for all
$\alpha \in (0, \delta_2)$ one has $|f(x + \alpha v) - f(x) - g(\alpha v)| < \varepsilon \alpha / 2$. Therefore, for
any $0 < \alpha < \min\{ \delta_1, \delta_2 \}$ one has
$$
  \left| \frac{f(x + \alpha v) - f(x)}{\alpha} - g'(0, v) \right|
  \le \frac{1}{\alpha }\big| f(x + \alpha v) - f(x) - g(\alpha v) \big| +
  \left| \frac{g(\alpha v)}{\alpha} - g'(0, v) \right| < \varepsilon,
$$
i.e. $f$ is directionally differentiable at $x$ and $f'(x, \cdot) = g'(0, \cdot)$.

Let us now show that both functions $\Phi_f(x, \cdot)$ and $\Psi_f(x, \cdot)$ are directionally differentiable at zero
and compute their directional derivatives. Then one obtains that $f$ is directionally differentiable at $x$ and
$f'(x, v) = \Phi_f'(x, \cdot)(0, v) + \Psi_f'(x, \cdot)(0, v)$ for all $v \in X$.

As was pointed out above, the sets $\underline{d} f(x)$ and $\overline{d} f(x)$ are
norm-bounded due to the fact that they are compact in the product topology $\tau_{\xR} \times w^*$. Thus, there
exists $K > 0$ such that $|a| + \| x^* \| \le K$ for any $(a, x^*) \in \underline{d} f(x) \cup \overline{d} f(x)$.
Hence
$$
  |\Phi_f(x, \Delta x)| \le K + K \| \Delta x \|, \quad 
  |\Psi_f(x, \Delta x)| \le K + K \| \Delta x \| \quad \forall \Delta x \in X,
$$
i.e. the functions $\Phi_f(x, \cdot)$ and $\Psi_f(x, \cdot)$ are bounded on bounded sets. Therefore, the convex function
$\Phi_f(x, \cdot)$ is continuous by \cite[Prp.~I.2.5]{EkelandTemam}, subdifferentiable on $X$ by
\cite[Prp.~I.5.2]{EkelandTemam}, everywhere directionally differentiable by \cite[Prp.~4.1.4]{IoffeTihomirov}, its
subdifferential at zero has the form
$$
  \partial \Phi_f(x, \cdot)(0) = \{ x^* \in X^* \mid (0, x^*) \in \underline{d} f(x) \}
$$
by \cite[Thrm.~4.2.3]{IoffeTihomirov} (recall that by definition $\Phi_f(x, 0) = 0$; see Def.~\ref{def:CodiffFunc}), and
for all $v \in X$ one has $\Phi'(x, \cdot)(0, v) = \max_{x^* \in \partial \Phi_f(x, \cdot)(0)} \langle x^*, v \rangle$ 
by \cite[Prp.~4.1.1]{IoffeTihomirov}. Similarly, the concave function $\Psi_f(x, \cdot)$ is everywhere directionally
differentiable and
$$
  \Psi'_f(x, \cdot)(0, v) = \min_{y^* \in \partial \Psi_f(x, \cdot)(0)} \langle y^*, v \rangle 
  \quad \forall v \in X,
$$
where $\partial \Psi_f(x, \cdot)(0) = \{ y^* \in X^* \mid (0, y^*) \in \overline{d} f(x) \}$. Thus, one can conclude
that  $f$ is quasidifferentiable at $x$ and the pair \eqref{eq:QuasidiffOfCodiffFunc} is a quasidifferential of $f$ 
at $x$.

Let us finally show that $f$ is Hadamard directionally differentiable at $x$. Indeed, by
\cite[Crlr.~2]{Dolgopolik_CodiffDescent} the function $f$ is Lipschitz continuous near $x$, i.e. there exist $r > 0$
and $L > 0$ such that $|f(x_1) - f(x_2)| \le L \| x_1 - x_2 \|$ for all $x_1, x_2 \in B(x, r)$.

Fix any $\varepsilon > 0$ and $v \in X$. Note that $f'(x, 0) = 0$ and $x + \alpha v' \in B(x, r)$ for any
$\alpha \in (0, \sqrt{r})$ and $v' \in X$ with $\| v' \| \le \sqrt{r}$. Therefore for 
$\delta = \min\{ \sqrt{r}, \varepsilon / 2 L \}$ one has
$$
  \left| \frac{f(x + \alpha v') - f(x)}{\alpha} - f'(x, 0) \right| \le L \| v' \| < \varepsilon
  \quad \forall \alpha \in (0, \delta), \: v' \in B(0, \delta),
$$
i.e. \eqref{eq:HadamardDer_Def} with $v = 0$ holds true. Thus, one can suppose that $v \ne 0$.

By the definition of directional derivative there exists $\delta_0 > 0$ such that
$$
  \left| \frac{f(x + \alpha v) - f(x)}{\alpha} - f'(x, v) \right| < \frac{\varepsilon}{2}
  \quad \forall \alpha \in (0, \delta_0).
$$
Observe that if $0 < \alpha < r / (2 \| v \|)$ and for some $v' \in X$ one has $\| v' - v \| < \| v \|$, then
$x + \alpha v \in B(x, r / 2)$ and $\| \alpha v' - \alpha v \| = \alpha \| v' - v \| < r / 2$, i.e.
$x + \alpha v' \in B(x, r)$. Therefore, put $\delta = \min\{ \delta_0, \varepsilon / 2 L, r / (2\| v \|), \| v \| \}$.
Then for any $\alpha \in (0, \delta)$ and  $v' \in B(v, \delta)$ one has $x + \alpha v' \in B(x, r)$ and
$$
  \left| \frac{f(x + \alpha v') - f(x)}{\alpha} - f'(x, v) \right|
  \le \left| \frac{f(x + \alpha v) - f(x)}{\alpha} - f'(x, v) \right|
  + \frac{1}{\alpha} \big| f(x + \alpha v') - f(x + \alpha v) \big| \le \frac{\varepsilon}{2} + L \| v' - v \| 
  < \varepsilon.
$$
Thus, $f$ is Hadamard quasidifferentiable at $x$.
\end{proof}

\begin{remark} \label{rmrk:Codifferential_Quasidifferential}
From the proof of the lemma above it follows that if the function $f$ is codifferentiable, but \textit{not}
continuously codifferentiable at $x$, then $f$ is still quasidifferentiable at $x$ and for any codifferential $D f(x)$
of $f$ at $x$ the pair \eqref{eq:QuasidiffOfCodiffFunc} is a quasidifferential of $f$ at $x$.
\end{remark}

\section{Codifferentiability of Integral Functionals}
\label{sec:CodiffIntegralFunc}

In this section we present simple sufficient conditions for the codifferentiability of integral functional
$$
  \mathcal{I}(u) = \int_{\Omega} f(x, u(x), \nabla u(x)) \, dx, 
  \quad u = (u_1, \ldots, u_m) \in \textrm{W}^{1, p}(\Omega; \xR^m)
$$
and compute its codifferential and quasidifferential. Here $\Omega \subseteq \xR^d$ is an open set (not
necessarily bounded), $f \colon \Omega \times \xR^m \times \xR^{m \times d} \to \xR$, 
$f = f(x, u, \xi)$, is a given function, $\xWsp^{1, p}(\Omega; \xR^m)$ is the Cartesian product of $m$ copies of the
Sobolev space $\xWsp^{1, p}(\Omega)$ with $1 \le p \le +\infty$. The space $\xWsp^{1, p}(\Omega; \xR^m)$ is endowed with
the norm $\| u \|_{1, p} = \big( \| u \|_p^p + \| \nabla u \|_p^p \big)^{1 / p}$ in the case $1 \le p < + \infty$ and
$\| u \|_{1, \infty} = \max\{ \| u \|_{\infty}, \| \nabla u \|_{\infty} \}$, where $\| \cdot \|_p$ is the standard norm
in $\xL^{p}(\Omega; \xR^k)$ for any $k \in \mathbb{N}$, i.e.
$\| u \|_p = (\int_{\Omega} |u(x)|^p \, dx)^{\frac{1}{p}}$
in the case $1 \le p < + \infty$, and $\| u \|_{\infty} = \esssup_{x \in \Omega} |u(x)|$ (here $| \cdot |$ is the
Euclidean norm). Denote by $p'$ the conjugate exponent of $p$, i.e. $1/p + 1/p' = 1$.

Below we assume that for a.e. $x \in \Omega$ the function $(u, \xi) \mapsto f(x, u, \xi)$ is codifferentiable, i.e. for
a.e. $x \in \Omega$ and for all $(u, \xi) \in \xR^m \times \xR^{m \times d}$ there exist compact convex
sets $\underline{d}_{u, \xi} f(x, u, \xi), \overline{d}_{u, \xi} f(x, u, \xi) \subset 
\xR \times \xR^m \times \xR^{m \times d}$ such that for any 
$(\Delta u, \Delta \xi) \in \xR^m \times \xR^{m \times d}$ one has
$$
  \lim_{\alpha \to + 0} \frac{1}{\alpha} \Big| f(x, u + \alpha \Delta u, \xi + \alpha \Delta \xi) - f(x, u, \xi)
  - \Phi_f(x, u, \xi; \alpha \Delta u, \alpha \Delta \xi) 
  - \Psi_f(x, u, \xi; \alpha \Delta u, \alpha \Delta \xi) \Big| = 0
$$
and $\Phi_f(x, u, \xi; 0, 0) = \Psi_f(x, u, \xi; 0, 0) = 0$, where
\begin{align} \label{eq:CodiffIntegrand1}
  \Phi_f(x, u, \xi; \Delta u, \Delta \xi) =
  \max_{(a, v_1, v_2) \in \underline{d}_{u, \xi} f(x, u, \xi)}
  \big( a + \langle v_1, \Delta u \rangle + \langle v_2, \Delta \xi \rangle \big), \\
  \Psi_f(x, u, \xi; \alpha \Delta u, \alpha \Delta \xi) = 
  \min_{(b, w_1, w_2) \in \overline{d}_{u, \xi} f(x, u, \xi)} 
  \big( b + \langle w_1, \Delta u \rangle + \langle w_2, \Delta \xi \rangle \big), \label{eq:CodiffIntegrand2}
\end{align}
and $\langle \cdot, \cdot \rangle$ is the inner product in $\xR^k$. We denote a codifferential of this function at a
point $(x, u, \xi)$ by  
$D_{u, \xi} f(x, u, \xi) = [\underline{d}_{u, \xi} f(x, u, \xi), \overline{d}_{u, \xi} f(x, u, \xi)]$. Finally, recall
that a multifunction $F \colon \Omega \times Y \rightrightarrows Z$, where $Y$ and $Z$ are metric spaces, is called
\textit{a Carath\'{e}odory map}, if for every $y \in Y$ the map $F(\cdot, y)$ is measurable and for every $x \in \Omega$
the map $F(x, \cdot)$ is continuous (see~\cite[Def.~8.2.7]{AubinFrankowska}). As is well-known, in the case when 
$Z = \xR^k$ and $F$ is compact-valued, the map $F(x, \cdot)$ is continuous iff it is Hausdorff continuous.

The following definition describes natural assumptions on the integrand $f = f(x, u, \xi)$ ensuring the
codifferentiability of the functional $\mathcal{I}$.

\begin{definition} \label{def:CodiffCond}
We say that $f$ satisfies \textit{the codifferentiability conditions} of order $p$, if

(1) $f$ is a Carath\'{e}odory function satisfying \textit{the growth condition} of order $p$, i.e. there exist an a.e.
nonnegative function $\beta \in \xL^{1}(\Omega)$ and $C > 0$ such that
$$
  |f(x, u, \xi)| \le \beta(x) + C \big( |u|^p + |\xi|^p \big) 
$$
for a.e. $x \in \Omega$ and for all $(u, \xi) \in \xR^m \times \xR^{m \times d}$ in the case $1 \le p < + \infty$, and
for any $N > 0$ there exists an a.e. nonnegative function $\beta_N \in \xL^1(\Omega)$ such that 
$|f(x, u, \xi)| \le \beta_N(x)$ for a.e. $x \in \Omega$ and all $(u, \xi) \in \xR^m \times \xR^{m \times d}$ with
$\max\{ |u|, |\xi| \} \le N$ in the case $p = +\infty$;

(2) for a.e. $x \in \Omega$ the function $(u, \xi) \mapsto f(x, u, \xi)$ is 
codifferentiable on $\xR^m \times \xR^{m \times d}$ and its codifferential mapping $D_{u, \xi} f(\cdot)$
is a Carath\'{e}odory map (i.e. both $\underline{d}_{u, \xi} f(\cdot)$ and $\overline{d}_{u, \xi} f(\cdot)$ are
Carath\'{e}odory maps) satisfying \textit{the growth condition} of order $p$, i.e. there exist
$C > 0$ and a.e. nonnegative functions $\beta \in \xL^1(\Omega)$ and $\gamma \in \xL^{p'}(\Omega)$ such that
for a.e. $x \in \Omega$ and for all $(u, \xi) \in \xR^m \times \xR^{m \times d}$, 
$(a, v_1, v_2) \in \underline{d}_{u, \xi} f(x, u, \xi)$, and $(b, w_1, w_2) \in \overline{d}_{u, \xi} f(x, u, \xi)$ one
has
$$
  \max\big\{ |a|, |b| \big\} \le \beta(x) + C\big( |u|^p + |\xi|^p \big), \quad
  \max\big\{ |v_1|, |v_2|, |w_1|, |w_2| \big\} \le \gamma(x) + C\big( |u|^{p - 1} + |\xi|^{p - 1} \big)
$$
in the case $1 \le p < + \infty$, and for any $N > 0$ there exists an a.e. nonnegative function 
$\beta_N \in \xL^1(\Omega)$ such that $\max\big\{ |a|, |v_1|, |v_2|, |b|, |w_1|, |w_2| \big\} \le \beta_N(x)$
for a.e. $x \in \Omega$ and for all $(a, v_1, v_2) \in \underline{d}_{u, \xi} f(x, u, \xi)$, 
$(b, w_1, w_2) \in \overline{d}_{u, \xi} f(x, u, \xi)$, and $(u, \xi) \in \xR^m \times \xR^{m \times d}$
with $\max\{ |u|, |\xi| \} \le N$ in the case $p = + \infty$.
\end{definition}

\begin{remark}
In the case $1 \le p < + \infty$ the growth conditions from the previous definition can be weakened with the use of the
Sobolev imbedding theorem (cf.~\cite[Sect.~3.4.2]{Dacorogna}). In particular, if $d = 1$, then it is sufficient to
suppose that for any $N > 0$ there exists $C_N > 0$ and a.e. nonnegative functions $\beta_N \in \xL^1(\Omega)$ and
$\gamma_N \in \xL^{p'}(\Omega)$ such that for a.e. $x \in \Omega$, for all $(u, \xi) \in \xR^m \times \xR^m$
with $|u| \le N$ and for all 
$(a, v_1, v_2) \in \underline{d}_{u, \xi} f(x, u, \xi) \cup \overline{d}_{u, \xi} f(x, u, \xi)$ 
one has $|a| \le \beta_N(x) + C_N |\xi|^p$ and $\max\{ |v_1|, |v_2| \} \le \gamma_N(x) + C_N |\xi|^{p - 1}$.
\end{remark}

Our aim is to prove that the codifferentiability conditions from the definition above guarantee that the functional
$\mathcal{I}$ is codifferentiable on $\xWsp^{1, p}(\Omega; \xR^m)$. Due to some technical
difficulties, in the case $p = 1$ we need to assume that the set $\Omega$ has \textit{the segment property}
\cite[p.~53--54]{Adams}, i.e. that for every $x$ from the boundary of $\Omega$ there exist a neighbourhood 
$U_x \subset \xR^d$ of $x$ and a nonzero vector $y_x \in \xR^d$ such that for any 
$z \in \cl \Omega \cap U_x$ one has $z + t y_x \in \Omega$ for all $t \in (0, 1)$. The segment property ensures that the
set $\Omega$ has a $(d - 1)$-dimensional boundary and cannot simultaneously lie on both sides of any given part of its
boundary (i.e. there are no cuts). Furthermore, it ensures that the space of continuously differentiable functions is
dense in $\xWsp^{1, p}(\Omega)$ (see, e.g. \cite[Thrm.~3.18]{Adams}).

\begin{theorem} \label{thm:CodiffIntegralFunctional}
Let $f$ satisfy the codifferentiability conditions of order $p \in [1, + \infty]$ and let either $1 < p \le + \infty$
or the set $\Omega$ be bounded and have the segment property. Then the functional $\mathcal{I}$ is correctly defined on 
$\xWsp^{1, p}(\Omega; \xR^m)$, codifferentiable at every $u \in \xWsp^{1, p}(\Omega; \xR^m)$, and the pair 
$D \mathcal{I}(u) = [\underline{d} \mathcal{I}(u), \overline{d} \mathcal{I}(u)]$ with
\begin{multline} \label{eq:HypodiffIntegralFunc}
  \underline{d} \mathcal{I}(u) = \Big\{ (A, x^*) \in \xR \times (\xWsp^{1, p}(\Omega; \xR^m))^* \Bigm|
  A = \int_{\Omega} a(x) \, dx, \quad
  \langle x^*, h \rangle = 
  \int_{\Omega} \big( \langle v_1(x), h(x) \rangle + \langle v_2(x), \nabla h(x) \rangle \big) \, dx \\
  \quad \forall h \in \xWsp^{1, p}(\Omega; \xR^m), \quad
  (a(\cdot), v_1(\cdot), v_2(\cdot)) \text{ is a measurable selection of the map } 
  \underline{d}_{u, \xi} f(\cdot, u(\cdot), \nabla u(\cdot)) \Big\}
\end{multline}
and
\begin{multline*}
  \overline{d} \mathcal{I}(u) = \Big\{ (B, y^*) \in \xR \times (\xWsp^{1, p}(\Omega; \xR^m))^* \Bigm|
  B = \int_{\Omega} b(x) \, dx, \quad
  \langle y^*, h \rangle = 
  \int_{\Omega} \big( \langle w_1(x), h(x) \rangle + \langle w_2(x), \nabla h(x) \rangle \big) \, dx \\
  \quad \forall h \in \xWsp^{1, p}(\Omega; \xR^m), \quad
  (b(\cdot), w_1(\cdot), w_2(\cdot)) \text{ is a measurable selection of the map } 
  \overline{d}_{u, \xi} f(\cdot, u(\cdot), \nabla u(\cdot)) \Big\}.
\end{multline*}
is a codifferential of $\mathcal{I}$ at $u$. Furthermore, the multifunctions $\underline{d} \mathcal{I}(\cdot)$ and
$\overline{d} \mathcal{I}(\cdot)$ are Hausdorff continuous, i.e. the functional $\mathcal{I}$ is continuously
codifferentiable on $\xWsp^{1, p}(\Omega; \xR^m)$, provided either $1 < p \le + \infty$ or the set-valued maps
$\underline{d}_{u, \xi} f(\cdot)$ and $\overline{d}_{u, \xi} f(\cdot)$ have the form
\begin{equation} \label{eq:IntegrandCodiffL1}
  \underline{d}_{u, \xi} f(x, u, \xi) = \co\big\{ (f_i(x, u, \xi), v_{1i}, v_{2i}) \mid i \in I \big\}, \quad
  \overline{d}_{u, \xi} f(x, u, \xi) = \co\big\{ (g_j(x, u, \xi), w_{1j}, w_{2j}) \mid j \in J \big\}
\end{equation}
for some vectors $v_{1i}, w_{1j} \in \xR^m$, $v_{2i}, w_{2j} \in \xR^{m \times d}$ and Carath\'{e}odory
functions $f_i, g_j \colon \Omega \times \xR^m \times \xR^{m \times d} \to \xR$, 
where $i \in I = \{ 1, \ldots, \ell \}$ and $j \in J = \{ 1, \ldots, r \}$.
\end{theorem}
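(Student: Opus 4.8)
The plan is to realise each of $\underline{d}\mathcal{I}(u)$ and $\overline{d}\mathcal{I}(u)$ as the image, under one fixed continuous linear map, of the set of integrable selections of the pointwise codifferential, and then to transfer the three defining properties of a codifferential (normalisation, convex weak${}^*$ compactness, approximation) from the integrand to the functional. First I would check well-definedness. The growth condition~(1) of Definition~\ref{def:CodiffCond} dominates $x \mapsto f(x, u(x), \nabla u(x))$ by a fixed $\xL^1$ function, so $\mathcal{I}(u)$ is finite, and measurability follows from the Carath\'eodory property. For the candidate sets, the multifunction $M(x) := \underline{d}_{u,\xi} f(x, u(x), \nabla u(x))$ is measurable and convex compact-valued, hence by the Kuratowski--Ryll-Nardzewski theorem it has measurable selections $(a, v_1, v_2)$; the growth condition~(2) together with the identity $(p-1)p' = p$ shows $a \in \xL^1$ and $v_1, v_2 \in \xL^{p'}$, so $A = \int_\Omega a \, dx$ is finite and, by H\"older's inequality, $\langle x^*, h \rangle = \int_\Omega (\langle v_1, h \rangle + \langle v_2, \nabla h \rangle) \, dx$ defines a bounded functional on $\xWsp^{1,p}(\Omega; \xR^m)$. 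Thus $\underline{d}\mathcal{I}(u) = T(\mathcal{S})$, where $\mathcal{S} \subset \xL^1 \times \xL^{p'} \times \xL^{p'}$ is the (convex, since $M$ is convex-valued) set of integrable selections and $T(a, v_1, v_2) = (\int_\Omega a \, dx, x^*)$; convexity of $\underline{d}\mathcal{I}(u)$ and the normalisation $\max_{(A, x^*)} A = \int_\Omega \max_{M(x)} a \, dx = 0$ are then immediate.

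Next I would establish convex weak${}^*$ compactness. Since a subset of $(\xR \times X^*, \tau_{\xR} \times w^*)$ is compact iff it is norm-bounded and closed (as recalled before Definition~\ref{def:CodiffFunc}) and $T$ is linear, it suffices to show $\mathcal{S}$ is compact in a suitable topology and $T$ continuous into $\tau_{\xR} \times w^*$. The set $\mathcal{S}$ is bounded, convex, and closed, and its components are relatively compact: the $\xL^1$-valued $a$-components are dominated by one fixed $\xL^1$ function, hence uniformly integrable and weakly compact by the Dunford--Pettis theorem; for $1 < p' < \infty$ the bounded $v_i$-components are weakly compact by reflexivity; for $p' = 1$ (i.e. $p = +\infty$) one again invokes Dunford--Pettis, and for $p' = \infty$ (i.e. $p = 1$) weak${}^*$ compactness of bounded balls of $\xL^\infty$. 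The map $T$ is continuous into $\tau_{\xR} \times w^*$ because, for each fixed $h$, the functional $(a, v_1, v_2) \mapsto \int_\Omega a \, dx + \int_\Omega (\langle v_1, h \rangle + \langle v_2, \nabla h \rangle) \, dx$ is continuous in the respective weak or weak${}^*$ topology, $h$ and $\nabla h$ lying in the relevant predual $\xL^p$. Hence $T(\mathcal{S})$ is convex and $\tau_{\xR} \times w^*$-compact; the argument for $\overline{d}\mathcal{I}(u)$ is identical.

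The approximation property \eqref{eq:CodiffDef} rests on the identity $\Phi_{\mathcal{I}}(u, h) = \int_\Omega \Phi_f(x, u(x), \nabla u(x); h(x), \nabla h(x)) \, dx$ and its analogue for $\Psi_{\mathcal{I}}$, which is precisely the statement that the support function of the Aumann integral $\int_\Omega M(x) \, dx$ equals the integral of the support functions $\sigma_{M(x)}$, i.e. the classical interchange of supremisation and integration over a decomposable family of selections; finiteness is guaranteed by the growth conditions. Granting this, the residual in \eqref{eq:CodiffDef} becomes $\alpha^{-1} \int_\Omega [ f(x, u + \alpha h, \nabla u + \alpha \nabla h) - f(x, u, \nabla u) - \Phi_f(x, u, \nabla u; \alpha h, \alpha \nabla h) - \Psi_f(x, u, \nabla u; \alpha h, \alpha \nabla h) ] \, dx$, whose integrand tends to $0$ for a.e.\ $x$ by the pointwise codifferentiability of $f$. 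To pass to the limit I would dominate this integrand, uniformly for $\alpha \in (0, 1]$, by a fixed $\xL^1$ function: the difference quotient of $f$ is controlled via the mean value theorem (Lemma~\ref{lem:MeanValuThrm}) applied to $(u, \xi) \mapsto f(x, u, \xi)$ on the relevant segment, bounding it by $(\gamma(x) + C(\ldots))(|h| + |\nabla h|) \in \xL^1$ by H\"older; the terms $\alpha^{-1}\Phi_f(x, u, \nabla u; \alpha h, \alpha \nabla h)$ and $\alpha^{-1}\Psi_f$ are bounded the same way using $a \le 0 \le b$ and the growth bounds on the $v$- and $w$-components. The dominated convergence theorem then yields \eqref{eq:CodiffDef}.

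Finally, for the Hausdorff continuity I would take $u_n \to u$ in $\xWsp^{1,p}$, so that along a subsequence $u_n \to u$ and $\nabla u_n \to \nabla u$ a.e.; by the Carath\'eodory (hence pointwise Hausdorff continuous) property of $D_{u,\xi} f$, the sets satisfy $\rho_n(x) := d_H(M_n(x), M(x)) \to 0$ a.e. Projecting a selection of $M_n(x)$ onto the nearest point of $M(x)$ (a measurable operation) yields a selection of $M$ whose $a$- and $v$-components differ from those of $M_n$ by at most $\rho_n(x)$, uniformly in the chosen selection; carried through $T$, this bounds the two one-sided Hausdorff distances between $\underline{d}\mathcal{I}(u_n)$ and $\underline{d}\mathcal{I}(u)$ by $\int_\Omega |a_n - a| \, dx$ in the $A$-coordinate and by $\|v_{1n} - v_1\|_{p'} + \|v_{2n} - v_2\|_{p'}$ in the $x^*$-coordinate. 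All these differences vanish a.e.; the $a$-differences are dominated in $\xL^1$, and the $p'$-th powers of the $v$-differences in $\xL^1$, by majorants that are uniformly integrable because $u_n \to u$ in $\xL^p$ and because the two growth rates $|u|^p$ and $|u|^{p-1}$ behave well under the exponents $1$ and $p'$; Vitali's generalised convergence theorem then gives $\int_\Omega |a_n - a| \, dx \to 0$ and $\|v_{in} - v_i\|_{p'} \to 0$ for $1 < p \le +\infty$, where $p' < \infty$. This step is the main obstacle and explains the dichotomy in the statement: for $p = 1$ one has $p' = \infty$ and the $v$-control would require $\|v_{in} - v_i\|_\infty \to 0$, which fails in general, and is repaired precisely by the special form \eqref{eq:IntegrandCodiffL1}, where the $v$- and $w$-directions are fixed independently of $(u, \xi)$, so that only the $\xL^1$-valued function values $f_i$, $g_j$ must be controlled. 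A routine subsequence argument upgrades convergence along a subsequence to convergence of the whole sequence, completing the proof.
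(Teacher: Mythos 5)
Your proposal follows essentially the same route as the paper's proof, which is split into the same four steps: well-definedness and realisation of $\underline{d}\mathcal{I}(u)$, $\overline{d}\mathcal{I}(u)$ as images of the set of integrable selections under one continuous linear operator $\mathcal{T}$; compactness of that selection set in the appropriate weak topology (Dunford--Pettis for the $a$-components, reflexivity of $\xL^{p'}$ for the $v$-components, Mazur's lemma plus pointwise closedness of the hypodifferential to identify the limit as a selection); the approximation property via the interchange of $\max$ and $\int$ over measurable selections (you phrase this as the Aumann support-function identity, the paper gets it from Filippov's theorem --- same content), pointwise codifferentiability, a mean-value-theorem domination, and dominated convergence; and Hausdorff continuity via nearest-point projection of selections combined with Vitali's theorem, with the special form \eqref{eq:IntegrandCodiffL1} rescuing the case $p=1$ exactly as you say. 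For $1 < p \le +\infty$ your argument is complete and matches the paper's.

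The one genuine gap is the compactness step for $p = 1$. You dispose of it with ``weak${}^*$ compactness of bounded balls of $\xL^\infty$'', but Banach--Alaoglu only gives relative compactness of the $v$-components; what is actually needed is that the set $\mathcal{S}$ of selections is \emph{closed} in the weak${}^*$ topology of $\xL^\infty$, and the Mazur-lemma device you (and the paper) use for $1 < p < +\infty$ does not transfer: the weak${}^*$ closure of a convex set in a dual space can be strictly larger than its norm closure, so one cannot extract norm-convergent (hence a.e.\ convergent) convex combinations from a weak${}^*$ convergent sequence. The repair is to use the boundedness of $\Omega$ to downgrade weak${}^*$ convergence in $\xL^\infty(\Omega)$ to weak convergence in $\xL^1(\Omega)$ (since $\xL^\infty(\Omega)\subset\xL^1(\Omega)$ for bounded $\Omega$ and the test functions may be taken in $\xL^\infty$), after which Mazur applies; but your proof never invokes the hypotheses that $\Omega$ is bounded and has the segment property, which the theorem imposes precisely for $p=1$ --- a sign that this case is not actually covered as written. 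Note that the paper itself does not reprove this case either: it cites Lemmas~5.4 and~5.6 of \cite{Dolgopolik_CalcVar} for $p=1$ and carries out the compactness argument only for $1 < p \le +\infty$.
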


\begin{remark}
The assumption that in the case $p = 1$ the set-valued maps $\underline{d}_{u, \xi} f(\cdot)$ and 
$\overline{d}_{u, \xi} f(\cdot)$ have the form \eqref{eq:IntegrandCodiffL1} might seem unnatural at first glance.
However, it should be noted that this assumption is satisfied in many particular examples. Furthermore, with the use of
the codifferential calculus \cite{DemRub_book} one can easily show that this assumption is preserved under addition,
multiplication by scalar, pointwise maximum, and pointwise minimum. For example, if $d = m = 1$, 
$\Omega = (\alpha, \beta)$, and
$$
  \mathcal{I}(u) = \int_{\alpha}^{\beta} \max\{ |u'(x)| - |u(x)|,   0 \} \, dx,
$$
then $f(x, u, \xi) = \max\{ |\xi| - |u|, 0 \} = \max\{ |\xi|, |u| \} + \min\{ u, - u \}$ and 
by applying the codifferential calculus \cite{DemRub_book} one gets
$$
  \underline{d}_{u, \xi} f(x, u, \xi) = \co\left\{ 
  \begin{pmatrix} \pm \xi - \max\{ |\xi|, |u| \} \\ 0 \\ \pm 1 \end{pmatrix}, 
  \begin{pmatrix} \pm u - \max\{ |\xi|, |u| \} \\ \pm 1 \\ 0 \end{pmatrix} \right\}, \quad
  \overline{d}_{u, \xi} f(x, u, \xi) = \co\left\{ \begin{pmatrix} \pm u + |u| \\ \pm 1 \\ 0 \end{pmatrix} \right\},
$$
i.e. assumption \eqref{eq:IntegrandCodiffL1} is satisfied.
\end{remark}

We split the proof of Theorem~\ref{thm:CodiffIntegralFunctional} into four parts, each of which is formulated as a
separate lemma. Before we proceed to these lemmas, it should be remarked that Theorem~\ref{thm:CodiffIntegralFunctional}
significantly improves \cite[Thrm.~5.1]{Dolgopolik_CalcVar}, since it states that the functional $\mathcal{I}$ is
\textit{continuously} codifferentiable for any $1 \le p \le + \infty$ and demonstrates that the rather restrictive and
obscure assumption on the uniform codifferentiability of the integrand $f$ with respect to the space 
$\xWsp^{1, p}(\Omega; \xR^m)$ (see~\cite[Def.~4.18]{Dolgopolik_CalcVar}) is redundant. Furthermore, in the case 
$1 < p \le + \infty$ Theorem~\ref{thm:CodiffIntegralFunctional} extends \cite[Thrm.~5.1]{Dolgopolik_CalcVar} to the case
of unbounded domains $\Omega$ and domains not having a segment property. Finally, 
Theorem~\ref{thm:CodiffIntegralFunctional} gives a positive answer to the second question raised by the author in
\cite[Remark~4.22]{Dolgopolik_CalcVar}.

We start with a simple technical lemma on the function $\Phi_f$ defined in \eqref{eq:CodiffIntegrand1}.

\begin{lemma} \label{lem:CodiffOfConvexTerm}
Suppose that for a.e. $x \in \Omega$ and for all $(u, \xi) \in \xR^m \times \xR^{m \times d}$ the function
$(u, \xi) \mapsto f(x, u, \xi)$ is codifferentiable and let $\Phi_f$ be defined as in \eqref{eq:CodiffIntegrand1}.
Then for any $u, h \in \xWsp^{1, p}(\Omega; \xR^m)$, for a.e. $x \in \Omega$, and for all $t \in \xR$ the
function $g(t) = \Phi_f(x, u(x), \nabla u(x); t h(x), t \nabla h(x))$ is codifferentiable and for any 
$t \in \xR$ the pair $D g(t) = [\underline{d} g(t), \{ 0 \}]$ is a codifferential of $g$ at $t$, where
\begin{align*}
  \underline{d} g(t) = \Big\{ (a_g, v_g) \in \xR \times \xR \Bigm|
  a_g &= a + \langle v_1, t h(x) \rangle + \langle v_2, t \nabla h(x) \rangle - g(t), \\
  v_g &= \langle v_1, h(x) \rangle + \langle v_2, \nabla h(x) \rangle, \enspace
  (a, v_1, v_2) \in \underline{d}_{u, \xi} f(x, u(x), \nabla u(x)) \Big\}.
\end{align*}
\end{lemma}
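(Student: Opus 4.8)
The plan is to fix a point $x \in \Omega$ at which the function $(u, \xi) \mapsto f(x, u, \xi)$ is codifferentiable (by assumption this holds for a.e.\ such $x$) and to treat $g$ as a function of the single real variable $t$. To lighten the notation I would abbreviate the compact convex set $\underline{d}_{u, \xi} f(x, u(x), \nabla u(x))$ as $\underline{d}$ and write $P = h(x) \in \xR^m$, $Q = \nabla h(x) \in \xR^{m \times d}$, both of which are fixed once $x$ is chosen. With this notation, \eqref{eq:CodiffIntegrand1} gives
\[
  g(t) = \max_{(a, v_1, v_2) \in \underline{d}} \big( a + t (\langle v_1, P \rangle + \langle v_2, Q \rangle) \big),
\]
so that $g$ is a finite maximum of affine functions of $t$ taken over a compact set; in particular the maximum is attained, and $g$ is a finite convex (hence Lipschitz) function on $\xR$.

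The key structural observation I would exploit is that the claimed hypodifferential is simply an affine image of $\underline{d}$. Indeed, for fixed $t$ consider the affine map $T_t \colon \xR \times \xR^m \times \xR^{m \times d} \to \xR \times \xR$ given by
\[
  T_t(a, v_1, v_2) = \big( a + \langle v_1, t P \rangle + \langle v_2, t Q \rangle - g(t), \ \langle v_1, P \rangle + \langle v_2, Q \rangle \big).
\]
Then $\underline{d} g(t) = T_t(\underline{d})$, and since $T_t$ is affine and $\underline{d}$ is compact and convex, the set $\underline{d} g(t)$ is a compact convex subset of $\xR \times \xR$, as required of a hypodifferential. To verify the normalisation $\Phi_g(t, 0) = 0$, I would note that the first coordinate $a_g = a + t(\langle v_1, P \rangle + \langle v_2, Q \rangle) - g(t)$ of any point of $\underline{d} g(t)$ is nonpositive because $g(t)$ is precisely the maximum of $a + t(\langle v_1, P \rangle + \langle v_2, Q \rangle)$ over $\underline{d}$, with value $0$ attained at any maximiser; thus $\max a_g = 0$ and $a_g \le 0$, which by the remark following Definition~\ref{def:CodiffFunc} is exactly what is needed.

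The final step is a direct computation showing that $[\underline{d} g(t), \{ 0 \}]$ reproduces the increment of $g$ exactly. Writing $\Phi_g(t, \Delta t) = \max_{(a_g, v_g) \in \underline{d} g(t)} (a_g + v_g \Delta t)$ and substituting the expressions for $a_g$ and $v_g$, the terms involving $g(t)$ and $t$ recombine, giving
\[
  \Phi_g(t, \Delta t) = \max_{(a, v_1, v_2) \in \underline{d}} \big( a + (t + \Delta t)(\langle v_1, P \rangle + \langle v_2, Q \rangle) \big) - g(t) = g(t + \Delta t) - g(t).
\]
Since the hyperdifferential is $\{ 0 \}$, we have $\Psi_g(t, \cdot) \equiv 0$, and consequently $g(t + \alpha \Delta t) - g(t) - \Phi_g(t, \alpha \Delta t) - \Psi_g(t, \alpha \Delta t) = 0$ for every $\alpha$ and every $\Delta t$. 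Hence the limit in Definition~\ref{def:CodiffFunc}, specialised to $X = \xR$, vanishes identically, which proves that $g$ is codifferentiable at every $t$ with the stated codifferential. I expect no genuine obstacle here: the whole argument rests on the elementary fact that a finite maximum of affine functions is its own exact first-order model along any direction, so the only care needed is the bookkeeping that the affine image $T_t(\underline{d})$ coincides with the set written in the statement and satisfies $\Phi_g(t, 0) = 0$.
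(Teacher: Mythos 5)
Your proposal is correct and follows essentially the same route as the paper's proof: both compute $g(t+\Delta t)-g(t)$ exactly as $\max_{(a_g,v_g)\in\underline{d} g(t)}(a_g+v_g\Delta t)$, identify $\underline{d} g(t)$ as the image of the compact convex set $\underline{d}_{u,\xi} f(x,u(x),\nabla u(x))$ under an affine map (hence compact and convex), and verify the normalisation $\max a_g = g(t)-g(t)=0$. The only cosmetic remark is that $g$ is a maximum of affine functions over a compact (not necessarily finite) index set, which you in fact acknowledge by noting the maximum is attained, so nothing is lost.
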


\begin{proof}
By the definition of $\Phi_f$ for any $t, \Delta t \in \xR$ one has
\begin{multline*}
  g(t + \Delta t) - g(t) = \max \big( a + \langle v_1, (t + \Delta t) h(x) \rangle 
  + \langle v_2, (t + \Delta t) \nabla h(x) \rangle \big) - g(t) \\
  = \max \Big( \big[ a + \langle v_1, t h(x) \rangle 
  + \langle v_2, t \nabla h(x) \rangle - g(t) \big]
  + \Delta t \langle v_1, h(x) \rangle + \Delta t \langle v_2, \nabla h(x) \rangle \Big)
  = \max_{(a_g, v_g) \in \underline{d} g(t)} (a_g + v_g \Delta t),
\end{multline*}
where the first two maximums are taken over all $(a, v_1, v_2) \in \underline{d}_{u, \xi} f(x, u(x), \nabla u(x))$
and the set $\underline{d} g(t)$ is defined in the formulation of the lemma. The set $\underline{d} g(t)$ is obviously
convex and compact as the image of the set $\underline{d}_{u, \xi} f(x, u(x), \nabla u(x))$ under the affine map
$$
  (a, v_1, v_2) \mapsto 
  \Big( a + \langle v_1, t h(x) \rangle + \langle v_2, t \nabla h(x) \rangle - g(t),
  \langle v_1, h(x) \rangle + \langle v_2, \nabla h(x) \rangle \Big) \in \xR \times \xR.
$$
Moreover, one has
$$
  \max_{(a_g, v_g) \in \underline{d} g(t)} a_g 
  = \max \big( a + \langle v_1, t h(x) \rangle + \langle v_2, t \nabla h(x) \rangle \big) - g(t) 
  = g(t) - g(t) = 0,
$$
where the second maximum is taken over all $(a, v_1, v_2) \in \underline{d}_{u, \xi} f(x, u(x), \nabla u(x))$.
Thus, the function $g$ is codifferentiable at every $t \in \xR$ and the pair 
$D g(\cdot) = [\underline{d} g(\cdot), \{ 0 \}]$ is its codifferential (see~Def.~\ref{def:CodiffFunc}).
\end{proof}

Next we show that the increment $\mathcal{I}(u + \alpha h) - \mathcal{I}(u)$ of the functional $\mathcal{I}$ can be
approximated by a DC function defined via the sets $\underline{d} \mathcal{I}(u)$ and $\overline{d} \mathcal{I}(u)$
from Theorem~\ref{thm:CodiffIntegralFunctional}. Note that these sets are nonempty, since measurable selections of the
multifunctions $\underline{d}_{u, \xi} f(\cdot, u(\cdot), \nabla u(\cdot))$ and 
$\overline{d}_{u, \xi} f(\cdot, u(\cdot), \nabla u(\cdot))$ exist 
by  \cite[Thrms.~8.1.3 and 8.2.8]{AubinFrankowska}.

The statement of the following lemma coincides with that of \cite[Lemma~5.2]{Dolgopolik_CalcVar} with the only
difference being the fact that here were remove the obscure assumption on uniform codifferentiability of the integrand
with respect to the Sobolev space. In \cite{Dolgopolik_CalcVar} this assumption ensured that one can pass to the limit
under the integral sign. Below we prove that one can pass to the limit without this assumption by applying Lebesgue's
dominated convergence theorem and obtaining necessary estimates with the use of the mean value theorem for
codifferentiable functions \cite[Prp.~2]{Dolgopolik_CodiffDescent}.

\begin{lemma} \label{lem:AlmostCodiff}
Let $f$ satisfy the codifferentiability conditions of order $p \in [1, + \infty]$ and the sets 
$\underline{d} \mathcal{I}(u)$ and $\overline{d} \mathcal{I}(u)$ be defined as in
Theorem~\ref{thm:CodiffIntegralFunctional}. Then the functional $\mathcal{I}$ is correctly defined 
on $\xWsp^{1, p}(\Omega; \xR^m)$, $\underline{d} \mathcal{I}(u), 
\overline{d} \mathcal{I}(u) \subset \xR \times (\xWsp^{1, p}(\Omega; \xR^m))^*$, and 
\begin{equation} \label{eq:CodiffDecomIntegral}
  \lim_{\alpha \to + 0} \frac{1}{\alpha} \Big| \mathcal{I}(u + \alpha h) - \mathcal{I}(u)
  - \max_{(A, x^*) \in \underline{d} \mathcal{I}(u)} ( A + \langle x^*, \alpha h \rangle ) 
  - \min_{(B, y^*) \in \overline{d} \mathcal{I}(u)} ( B + \langle y^*, \alpha h \rangle ) \Big| = 0.
\end{equation}
for all $u, h \in \xWsp^{1, p}(\Omega; \xR^m)$.
\end{lemma}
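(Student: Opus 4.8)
The plan is to deduce the \emph{functional} statement from the assumed \emph{pointwise} codifferentiability of $(u,\xi)\mapsto f(x,u,\xi)$, by first rewriting the maximum and minimum over $\underline{d}\mathcal{I}(u)$ and $\overline{d}\mathcal{I}(u)$ as integrals of $\Phi_f$ and $\Psi_f$, and then passing to the limit $\alpha\to+0$ under the integral sign. Two preliminary claims are handled directly by the growth conditions of Definition~\ref{def:CodiffCond}. Correctness of $\mathcal{I}$ is immediate: $f$ Carath\'eodory makes $x\mapsto f(x,u(x),\nabla u(x))$ measurable, and $|f(x,u,\xi)|\le\beta(x)+C(|u|^p+|\xi|^p)$ with $u,\nabla u\in\xL^p(\Omega)$ puts the integrand in $\xL^1(\Omega)$ (for $p=+\infty$ use the local bound $\beta_N$ with $N=\max\{\|u\|_\infty,\|\nabla u\|_\infty\}$). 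For the inclusion $\underline{d}\mathcal{I}(u),\overline{d}\mathcal{I}(u)\subset\xR\times(\xWsp^{1,p}(\Omega;\xR^m))^*$, the bound on $|a|$ makes $A=\int_\Omega a\,dx$ finite, while $|v_1|,|v_2|\le\gamma(x)+C(|u|^{p-1}+|\xi|^{p-1})$ gives $v_1,v_2\in\xL^{p'}(\Omega)$ --- here $|u|^{p-1}\in\xL^{p'}$ since $(p-1)p'=p$ and $u\in\xL^p$ --- so H\"older's inequality yields $|\langle x^*,h\rangle|\le\|v_1\|_{p'}\|h\|_p+\|v_2\|_{p'}\|\nabla h\|_p$, i.e.\ $x^*$ is a bounded functional. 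Nonemptiness of both sets follows from the measurable selection theorems \cite[Thrms.~8.1.3 and 8.2.8]{AubinFrankowska}, as already noted before the lemma.

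The key identity is that, for every fixed $\alpha>0$,
\begin{equation*}
  \max_{(A,x^*)\in\underline{d}\mathcal{I}(u)}\big(A+\langle x^*,\alpha h\rangle\big)
  =\int_\Omega\Phi_f\big(x,u(x),\nabla u(x);\alpha h(x),\alpha\nabla h(x)\big)\,dx,
\end{equation*}
and the analogous equality with $\min$ over $\overline{d}\mathcal{I}(u)$ and $\Psi_f$. Indeed, writing any $(A,x^*)\in\underline{d}\mathcal{I}(u)$ through its generating selection $(a,v_1,v_2)$ gives $A+\langle x^*,\alpha h\rangle=\int_\Omega(a+\alpha\langle v_1,h\rangle+\alpha\langle v_2,\nabla h\rangle)\,dx$; the inequality ``$\le$'' is pointwise from the definition of $\Phi_f$ as a maximum over $\underline{d}_{u,\xi}f(x,u(x),\nabla u(x))$, and ``$\ge$'' follows by choosing, via a measurable maximum argument, a measurable selection that attains the pointwise maximum for a.e.\ $x$. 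The growth bounds just used show the components of this maximising selection are in $\xL^1$ and $\xL^{p'}$, so it is an admissible generator and realises the supremum as a genuine maximum.

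With this identity the left-hand side of \eqref{eq:CodiffDecomIntegral} is $\tfrac1\alpha\big|\int_\Omega\theta_\alpha(x)\,dx\big|$ with $\theta_\alpha(x)=f(x,u+\alpha h,\nabla u+\alpha\nabla h)-f(x,u,\nabla u)-\Phi_f(x,u,\nabla u;\alpha h,\alpha\nabla h)-\Psi_f(x,u,\nabla u;\alpha h,\alpha\nabla h)$, so it suffices to prove that $\int_\Omega\eta_\alpha\,dx\to0$, where $\eta_\alpha(x)=|\theta_\alpha(x)|/\alpha$. By the pointwise codifferentiability of $f(x,\cdot,\cdot)$ applied with the increment $(h(x),\nabla h(x))$ we have $\eta_\alpha(x)\to0$ for a.e.\ $x$, and $\eta_\alpha$ is measurable because $f,\Phi_f,\Psi_f$ are Carath\'eodory. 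Hence it remains to dominate $\eta_\alpha$, uniformly for $\alpha\in(0,1]$, by a fixed $\xL^1(\Omega)$ function and invoke the dominated convergence theorem along arbitrary sequences $\alpha_n\to+0$.

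Constructing this majorant is the technical heart of the argument and the step I expect to be the main obstacle. For the difference quotient of $f$ I would apply the mean value theorem for codifferentiable functions (Lemma~\ref{lem:MeanValuThrm}) to $f(x,\cdot,\cdot)$ on the segment joining $(u(x),\nabla u(x))$ and $(u(x)+\alpha h(x),\nabla u(x)+\alpha\nabla h(x))$, which furnishes a point $z_\alpha(x)$ on that segment together with $(0,v_1,v_2)\in\underline{d}_{u,\xi}f(x,z_\alpha(x))$ and $(0,w_1,w_2)\in\overline{d}_{u,\xi}f(x,z_\alpha(x))$ such that
\begin{equation*}
  \tfrac1\alpha\big|f(x,u+\alpha h,\nabla u+\alpha\nabla h)-f(x,u,\nabla u)\big|
  =\big|\langle v_1+w_1,h(x)\rangle+\langle v_2+w_2,\nabla h(x)\rangle\big|.
\end{equation*}
For $\alpha\in(0,1]$ the point $z_\alpha(x)$ lies in the ball of radius $\rho(x):=|u(x)|+|\nabla u(x)|+|h(x)|+|\nabla h(x)|$, so the growth bound on the codifferential gives $|v_i|,|w_i|\le\gamma(x)+2C\rho(x)^{p-1}$ \emph{independently of} $\alpha$, whence the right-hand side is at most $2(\gamma(x)+2C\rho(x)^{p-1})(|h(x)|+|\nabla h(x)|)$. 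A similar and simpler estimate, using $\max_a a=0$ for $\Phi_f$, $\min_b b=0$ for $\Psi_f$, and the same growth bound at the base point, controls $\tfrac1\alpha|\Phi_f|$ and $\tfrac1\alpha|\Psi_f|$ by expressions of the same form. Finally $\rho\in\xL^p(\Omega)$ forces $\rho^{p-1}\in\xL^{p'}(\Omega)$ and $|h|,|\nabla h|\in\xL^p(\Omega)$, so H\"older's inequality places the majorant in $\xL^1(\Omega)$; the case $p=+\infty$ is identical with $\rho^{p-1}$ replaced by $\beta_N$ and H\"older by the essential boundedness of $h,\nabla h$. The delicate points are precisely the measurable-selection justification of the max/min-with-integral interchange and the matching of conjugate exponents that keeps every product in $\xL^1$.
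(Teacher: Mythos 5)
Your proposal is correct and follows essentially the same route as the paper's proof: verify well-posedness and the inclusion in $\xR \times (\xWsp^{1, p}(\Omega; \xR^m))^*$ via the growth conditions and H\"{o}lder's inequality, identify the max/min over $\underline{d} \mathcal{I}(u)$, $\overline{d} \mathcal{I}(u)$ with the integrals of $\Phi_f$, $\Psi_f$ through a measurable selection attaining the pointwise maximum (Filippov's theorem), and then pass to the limit by dominated convergence with a majorant built from the mean value theorem for codifferentiable functions and the growth condition on $D_{u, \xi} f$. The only local difference is that you dominate $\alpha^{-1}|\Phi_f|$ and $\alpha^{-1}|\Psi_f|$ directly from $\max a = 0$, $\min b = 0$ and the growth bound at the base point, whereas the paper routes this through the mean value theorem applied to $t \mapsto \Phi_f(x, u(x), \nabla u(x); t h(x), t \nabla h(x))$ via Lemma~\ref{lem:CodiffOfConvexTerm}; both give the same majorant.
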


\begin{proof}
Fix any $u \in \xWsp^{1, p}(\Omega; \xR^m)$. By our assumption $f$ is a Carath\'{e}odory function satisfying
the growth condition (see~Def.~\ref{def:CodiffCond}). Therefore, as is well-known, 
the function $f(\cdot, u(\cdot), \nabla u(\cdot))$ is measurable and belongs to $\xL^1(\Omega)$, which implies that
$\mathcal{I}(u)$ is correctly defined and finite.

Let us verify that the sets $\underline{d} \mathcal{I}(u)$ and $\overline{d} \mathcal{I}(u)$ are correctly defined.
Indeed, fix any measurable selection $(a(\cdot), v_1(\cdot), v_2(\cdot))$ of the multifunction
$\underline{d}_{u, \xi} f(\cdot, u(\cdot), \nabla u(\cdot))$. By the growth condition on $D_{u, \xi} f(\cdot)$
(see~Def.~\ref{def:CodiffCond}) there exist $C > 0$ and a.e. nonnegative functions $\beta \in \xL^1(\Omega)$ and 
$\gamma \in \xL^{p'}(\Omega)$ such that
$$
  |a(x)| \le \beta(x) + C \big( |u(x)|^p + |\nabla u(x)|^p \big), \quad
  \max\big\{ |v_1(x)|, |v_2(x)| \big\} \le \gamma(x) + C \big( |u(x)|^{p - 1} + |\nabla u(x)|^{p - 1} \big)
$$
for a.e. $x \in \Omega$ in the case $1 \le p < + \infty$, and there exists $\beta_N \in \xL^1(\Omega)$ such that 
for a.e. $x \in \Omega$ one has $\max\{ |a(x)|, |v_1(x)|, |v_2(x)| \} \le \beta_N(x)$ in the case $p = + \infty$ (here
$N = \| u \|_{1, \infty}$). Hence with the use of H\"{o}lder's inequality one obtains 
that $a(\cdot) \in \xL^{1}(\Omega)$, $v_1(\cdot) \in \xL^{p'}(\Omega; \xR^m)$, 
and $v_2(\cdot) \in \xL^{p'}(\Omega; \xR^{m \times d})$. Therefore the integral $A = \int_{\Omega} a(x) dx$ is correctly
defined and finite, while the functional $x^*$ defined as
$$
  \langle x^*, h \rangle = 
  \int_{\Omega} \big( \langle v_1(x), h(x) \rangle + \langle v_2(x), \nabla h(x) \rangle \big) \, dx 
  \quad \forall h \in \xWsp^{1, p}(\Omega; \xR^m)
$$
is a continuous linear functional on $\xWsp^{1, p}(\Omega; \xR^m)$, i.e. the hypodifferential $\underline{d} \mathcal{I}(u)$ is
correctly defined and $\underline{d} \mathcal{I}(u) \subset \xR \times (\xWsp^{1, p}(\Omega; \xR^m))^*$. 
The fact that $\overline{d} \mathcal{I}(u) \subset \xR \times (\xWsp^{1, p}(\Omega; \xR^m)^*$ is proved in
the same way.

Choose any $h \in \xWsp^{1, p}(\Omega; \xR^m)$ and a sequence $\{ \alpha_n \} \subset (0, + \infty)$ converging to
zero. Let us prove that
\begin{multline} \label{CodiffUnderIntegralSign}
  \lim_{n \to \infty} \frac{1}{\alpha_n} \Big| \mathcal{I}(u + \alpha_n h) - \mathcal{I}(u)
  - \int_{\Omega} \Phi_f\big( x, u(x), \nabla u(x); \alpha_n h(x), \alpha_n \nabla h(x) \big) \, dx \\
  - \int_{\Omega} \Psi_f\big( x, u(x), \nabla u(x); \alpha_n h(x), \alpha_n \nabla h(x) \big) \, dx
  \Big| = 0,
\end{multline}
where the functions $\Phi_f$ and $\Psi_f$ are defined in \eqref{eq:CodiffIntegrand1}, \eqref{eq:CodiffIntegrand2}.
Indeed, for any $n \in \mathbb{N}$ and $x \in \Omega$ denote
\begin{multline} \label{eq:CodiffDecomUnderIntegral}
  g_n(x) = \frac{1}{\alpha_n} \Big( f(x, u(x) + \alpha_n h(x), \nabla u(x) + \alpha_n \nabla h(x)) 
  - f(x, u(x), \nabla u(x)) \\
  - \Phi_f\big( x, u(x), \nabla u(x); \alpha_n h(x), \alpha_n \nabla h(x) \big) 
  - \Psi_f\big( x, u(x), \nabla u(x); \alpha_n h(x), \alpha_n \nabla h(x) \big) \Big).
\end{multline}
Our aim is to prove \eqref{CodiffUnderIntegralSign} by applying Lebesgue's dominated convergence theorem to the sequence
of functions $\{ g_n \}$. Firstly, note that by the definition of codifferential $g_n(x) \to 0$ as $n \to \infty$ for
a.e. $x \in \Omega$. Next, we show that $g_n \in \xL^1(\Omega)$ for all $n \in \mathbb{N}$.

From the fact that the integrand $f$ satisfied the growth condition it follows that the first two
terms in the definition of $g_n$ belong to $\xL^1(\Omega)$. Let us check that
the function $\eta_n(x) = \Phi_f\big( x, u(x), \nabla u(x); \alpha_n h(x), \alpha_n \nabla h(x) \big)$, $x \in \Omega$,
belongs to $\xL^1(\Omega)$ as well. The proof of this fact for the function $\Psi_f$ is exactly the same.

By the codifferentiability conditions $\underline{d}_{u, \xi} f(\cdot)$ is a Carath\'{e}odory map, which by
\cite[Thrm.~8.2.8]{AubinFrankowska} implies that the multifunction 
$\underline{d}_{u, \xi} f(\cdot, u(\cdot), \nabla u(\cdot))$ is measurable. The map
$(x, (a, v_1, v_2)) \mapsto \langle a + \alpha_n \langle v_1, h(x) \rangle + \alpha_n \langle v_2, \nabla h(x) \rangle$
is obviously a Carath\'{e}odory function. Hence by the definitions of $\Phi_f$ (see~\eqref{eq:CodiffIntegrand1}) and
$\eta_n$ and the theorem on the measurability of marginal functions \cite[Thrm.~8.2.11]{AubinFrankowska} one obtains
that the function $\eta_n$ is measurable. Moreover, by the growth condition on the codifferential mapping 
$D_{u, \xi} f(\cdot)$ (see~Def.~\ref{def:CodiffCond}) there exist $C > 0$ and a.e. nonnegative functions 
$\beta \in \xL^1(\Omega)$ and $\gamma \in \xL^{p'}(\Omega)$ such that
\begin{multline*}
  |\eta_n(x)| = \big| \Phi_f(x, u(x), \nabla u(x); \alpha_n h(x), \alpha_n \nabla h(x)) \big|
  \le \beta(x) + C\big( |u(x)|^p + |\nabla u(x)|^p \big) \\ 
  + \alpha_n \big( \gamma(x) + C( |u(x)|^{p - 1} + |\nabla u(x)|^{p - 1} ) \big) \big( |h(x)| + |\nabla h(x)| \big)
\end{multline*}
for a.e. $x \in \Omega$ in the case $1 \le p < + \infty$, and there exists an a.e. nonnegative function 
$\beta_N \in \xL^1(\Omega)$ such that
$$
  |\eta_n(x)| \le \beta_N(x) \big( 1 + \alpha_n |h(x)| + \alpha_n |\nabla h(x)| \big)
$$
for a.e. $x \in \Omega$ in the case $p = + \infty$ (here $N = \| u \|_{1, \infty}$). Hence taking into
account the fact that $u, h \in \xWsp^{1, p}(\Omega; \xR^m)$ and applying H\"{o}lder's inequality in the case 
$1 < p < + \infty$ one obtains that $\eta_n \in \xL^1(\Omega)$ for all $n \in \mathbb{N}$, which implies that 
$g_n \in \xL^1(\Omega)$ for all $n \in \mathbb{N}$ as well.

Now we prove that the sequence $\{ g_n \}$ is dominated by some integrable function. Indeed, by the mean value theorem
for codifferentiable functions \cite[Proposition~2]{Dolgopolik_CodiffDescent} for any $n \in \mathbb{N}$ and for a.e. 
$x \in \Omega$ one can find $\theta_n(x) \in (0, \alpha_n)$, and triplets
$(0, v_{1n}(x), v_{2n}(x)) \in 
\underline{d}_{u, \xi} f(x, u(x) + \theta_n(x) h(x), \nabla u(x) + \theta_n(x) \nabla h(x))$,
and
$(0, w_{1n}(x), w_{2n}(x)) \in 
\overline{d}_{u, \xi} f(x, u(x) + \theta_n(x) h(x), \nabla u(x) + \theta_n(x) \nabla h(x))$ such that
\begin{multline*}
  \frac{1}{\alpha_n} 
  \big( f(x, u(x) + \alpha_n h(x), \nabla u(x) + \alpha_n \nabla h(x)) - f(x, u(x), \nabla u(x)) \big) \\
  = \langle v_{1n}(x) + w_{1n}(x), h(x) \rangle + \langle v_{2n}(x) + w_{2n}(x), \nabla h(x) \rangle.
\end{multline*}
Hence by the growth condition on $D_{u, \xi} f(\cdot)$ (see~Def.~\ref{def:CodiffCond}) there exist $C > 0$ and a.e.
nonnegative function $\gamma \in \xL^{p'}(\Omega)$ such that
\begin{multline*}
  \frac{1}{\alpha_n} 
  \big| f(x, u(x) + \alpha_n h(x), \nabla u(x) + \alpha_n \nabla h(x)) - f(x, u(x), \nabla u(x)) \big| 
  \\
  \le 2 
  \big( \gamma(x) + C( |u(x) + \theta_n(x) h(x)|^{p - 1} + |\nabla u(x) + \theta_n(x) \nabla h(x)|^{p - 1} ) \big) 
  \big( |h(x)| + |\nabla h(x)| \big)
\end{multline*}
for a.e. $x \in \Omega$ in the case $1 \le p < + \infty$, and there exists an a.e. nonnegative function 
$\beta_N \in \xL^1(\Omega)$ such that
$$
  \frac{1}{\alpha_n} 
  \big| f(x, u(x) + \alpha_n h(x), \nabla u(x) + \alpha_n \nabla h(x)) - f(x, u(x), \nabla u(x)) \big| 
  \le 2 \beta_N(x) \big( |h(x)| + |\nabla h(x)| \big)
$$
for a.e. $x \in \Omega$ in the case $p = + \infty$, where $N = \| u \|_{1, \infty} + \alpha_* \| h \|_{1, \infty}$ and
$\alpha_* = \max_{n \in \mathbb{N}} \alpha_n$. Now, taking into account the fact that 
$u, h \in \xWsp^{1, p}(\Omega; \xR^m)$ and applying H\"{o}lder's inequality in the case $1 < p < + \infty$
one gets that the first two terms in \eqref{eq:CodiffDecomUnderIntegral} are dominated by an integrable function
independent of $n$. 

Let us now turn to the third term in \eqref{eq:CodiffDecomUnderIntegral}. The fact that the last term is dominated by
an integrable function can be proved in exactly the same way. By applying the mean value theorem for codifferentiable
functions and Lemma~\ref{lem:CodiffOfConvexTerm} one obtains that for any $n \in \mathbb{N}$ and for a.e. $x \in \Omega$
there exist $\theta_n(x) \in (0, \alpha_n)$ and 
$(a_n(x), v_{1n}(x), v_{2n}(x)) \in \underline{d}_{u, \xi} f(x, u(x), \nabla u(x))$ such that
\begin{gather*}
  \Phi_f(x, u(x), \nabla u(x); \theta_n(x) h(x), \theta_n(x) \nabla h(x))
  = a_n(x) + \langle v_{1n}(x), \theta_n(x) h(x) \rangle + \langle v_{2n}(x), \theta_n(x) \nabla h(x) \rangle \\
  \frac{1}{\alpha_n} \Phi_f\big( x, u(x), \nabla u(x); \alpha_n h(x), \alpha_n \nabla h(x) \big)
  = \langle v_{1n}(x), h(x) \rangle + \langle v_{2n}(x), \nabla h(x) \rangle
\end{gather*}
(here we used the fact that $\Phi_f(x, u(x), \nabla u(x); 0, 0) = 0$ by the definition of codifferential).
Hence utilising the growth condition on $D_{u, \xi} f(\cdot)$ in the same way as above one can easily verify that 
the third term in \eqref{eq:CodiffDecomUnderIntegral} is dominated by an integrable function independent of $n$ as
well. Consequently, applying Lebesgue's dominated convergence theorem one obtains that 
$\int_{\Omega} g_n(x) \, dx \to 0$ as $n \to \infty$ or, equivalently, \eqref{CodiffUnderIntegralSign} holds true 
(see the definition of $g_n$, formula \eqref{eq:CodiffDecomUnderIntegral}).

Let us check that
\begin{equation} \label{eq:IntegralOfMaxFunc_Filippov}
  \int_{\Omega} \Phi_f\big( x, u(x), \nabla u(x); \alpha_n h(x), \alpha_n \nabla h(x) \big) \, dx =
  \max_{(A, x^*) \in \underline{d} \mathcal{I}(u)} ( A + \langle x^*, \alpha_n h \rangle )
\end{equation}
for all $n \in \mathbb{N}$, where $\underline{d} \mathcal{I}(u)$ is defined in
Theorem~\ref{thm:CodiffIntegralFunctional}. The validity of a similar equality involving $\Psi_f$ and 
$\overline{d} \mathcal{I}(u)$ can be proved in the same way. Then applying \eqref{CodiffUnderIntegralSign} one obtains
that equality \eqref{eq:CodiffDecomIntegral} holds true and the proof is complete.

By the definition of $\Phi_f$ for any measurable selection $(a(\cdot), v_1(\cdot), v_2(\cdot))$ of the multifunction
$\underline{d}_{u, \xi} f(\cdot, u(\cdot), \nabla u(\cdot))$ one has
$$
  a(x) + \alpha_n \langle v_1(x), h(x) \rangle + \alpha_n \langle v_2(x), \nabla h(x) \rangle
  \le \Phi_f\big( x, u(x), \nabla u(x); \alpha_n h(x), \alpha_n \nabla h(x) \big)
$$
for a.e. $x \in \Omega$ and for all $n \in \mathbb{N}$, which obviously implies that the inequality
$$
  \sup_{(A, x^*) \in \underline{d} \mathcal{I}(u)} ( A + \langle x^*, \alpha_n h \rangle )
  \le \int_{\Omega} \Phi_f\big( x, u(x), \nabla u(x); \alpha_n h(x), \alpha_n \nabla h(x) \big) \, dx
$$
holds true for all $n \in \mathbb{N}$ (see~\eqref{eq:HypodiffIntegralFunc}). On the other hand, observe that by
definition
\begin{multline*}
  \Phi_f\big( x, u(x), \nabla u(x); \alpha_n h(x), \alpha_n \nabla h(x) \big) \in
  \Big\{ a + \alpha_n \langle v_1, h(x) \rangle + \alpha_n \langle v_2, \nabla h(x) \rangle \in \xR \Bigm| \\
  (a, v_1, v_2) \in \underline{d}_{u, \xi} f(x, u(x), \nabla u(x)) \Big\}
\end{multline*}
for a.e. $x \in \Omega$ and for all $n \in \mathbb{N}$. As was noted above, from the codifferentiability conditions in
follows that the multifunction $\underline{d}_{u, \xi} f(\cdot, u(\cdot), \nabla u(\cdot))$ is measurable. Therefore,
by Filippov's theorem \cite[Thrm.~8.2.10]{AubinFrankowska} for any $n \in \mathbb{N}$ there exists a measurable
selection $(a(\cdot), v_1(\cdot), v_2(\cdot))$ of the set-valued map 
$\underline{d}_{u, \xi} f(\cdot, u(\cdot), \nabla u(\cdot))$ such that
$$
  \Phi_f\big( x, u(x), \nabla u(x); \alpha_n h(x), \alpha_n \nabla h(x) \big)
  = a(x) + \alpha_n \langle v_1(x), h(x) \rangle + \alpha_n \langle v_2(x), \nabla h(x) \rangle
$$
for a.e. $x \in \Omega$, which implies that for the corresponding element $(A, x^*) \in \underline{d} \mathcal{I}(u)$
(see~\eqref{eq:HypodiffIntegralFunc}) one has
$$
  \int_{\Omega} \Phi_f\big( x, u(x), \nabla u(x); \alpha_n h(x), \alpha_n \nabla h(x) \big) \, dx 
  = A + \langle x^*, \alpha_n h \rangle.
$$
Thus, equality \eqref{eq:IntegralOfMaxFunc_Filippov} holds true and the proof is complete.
\end{proof}

Next we prove that the pair $D \mathcal{I}(u) = [\underline{d} \mathcal{I}(u), \overline{d} \mathcal{I}(u)]$ defined in
Theorem~\ref{thm:CodiffIntegralFunctional} is indeed a codifferential of $\mathcal{I}$ at $u$. According to the
definition of codifferential (see~Def.~\ref{def:CodiffFunc}), we need to prove that both sets 
$\underline{d} \mathcal{I}(u)$ and $\overline{d} \mathcal{I}(u)$ are convex and compact in the corresponding product
topology. A proof of this result in the case when $\Omega$ is bounded and has the segment property was given in 
\cite[Lemmas~5.4 and 5.6]{Dolgopolik_CalcVar}. Therefore, below we give a proof of the case $1 < p \le + \infty$ only.

Let us note that that we managed to remove the assumptions on the set $\Omega$ in the case $1 < p \le + \infty$ by
using a completely different proof technique. Instead of reducing the proof of the compactness of 
$\underline{d} \mathcal{I}(u)$ and $\overline{d} \mathcal{I}(u)$ to the proof of the closedness of an auxiliary Aumann
integral as it is done in \cite{Dolgopolik_CalcVar}, here we prove that the sets of measurable selections of the
multifunctions $\underline{d}_{u, \xi} f(\cdot, u(\cdot), \nabla u(\cdot)$ and
$\overline{d}_{u, \xi} f(\cdot, u(\cdot), \nabla u(\cdot))$ are compact in a suitable topology and then conclude that
the sets $\underline{d} \mathcal{I}(u)$ and $\overline{d} \mathcal{I}(u)$ are compact in the product topology as
continuous images of the corresponding sets of measurable selections.

\begin{lemma} \label{lem:ClosednessOfAlmostCodiff}
Let $f$ satisfy the codifferentiability conditions of order $p \in [1, + \infty]$ and let either $1 < p \le + \infty$
or the set $\Omega$ be bounded and have the segment property. Then for any $u \in \xWsp^{1, p}(\Omega; \xR^m)$ the
sets $\underline{d} \mathcal{I}(u)$ and $\overline{d} \mathcal{I}(u)$ defined in
Theorem~\ref{thm:CodiffIntegralFunctional} are convex and compact in the topology $\tau_{\xR} \times w^*$.
Furthermore, the equalities $\max\{ A \colon (A, x^*) \in \underline{d} \mathcal{I}(u) \} = 
\min\{ B \colon (B, y^*) \in \overline{d} \mathcal{I}(u) \} = 0$ hold true.
\end{lemma}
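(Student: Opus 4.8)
The plan is to exploit the parametrisation of $\underline{d}\mathcal{I}(u)$ by measurable selections of the multifunction $K(\cdot) := \underline{d}_{u,\xi} f(\cdot, u(\cdot), \nabla u(\cdot))$ and to reduce the required convexity and compactness to a weak compactness statement for the set of selections in a product of Lebesgue spaces. Let $\mathcal{S}$ denote the set of all measurable selections $(a, v_1, v_2)$ of $K(\cdot)$. By the growth condition on $D_{u,\xi} f(\cdot)$ (Definition~\ref{def:CodiffCond}) and the estimates already established in the proof of Lemma~\ref{lem:AlmostCodiff}, every such selection satisfies $a \in \xL^1(\Omega)$ and $v_1, v_2 \in \xL^{p'}$, so $\mathcal{S} \subset \xL^1(\Omega) \times \xL^{p'}(\Omega; \xR^m) \times \xL^{p'}(\Omega; \xR^{m\times d})$, and the linear map $T$ sending $(a,v_1,v_2)$ to the pair $A = \int_\Omega a\,dx$, $\langle x^*, h\rangle = \int_\Omega (\langle v_1, h\rangle + \langle v_2, \nabla h\rangle)\,dx$ satisfies $\underline{d}\mathcal{I}(u) = T(\mathcal{S})$. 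Since $K(x)$ is convex for a.e.\ $x$, a pointwise convex combination of two selections is again a selection, so $\mathcal{S}$ is convex and hence $\underline{d}\mathcal{I}(u) = T(\mathcal{S})$ is convex, settling the first assertion.

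For compactness I would invoke the criterion recalled at the start of Section~\ref{sec:Preliminaries}: a subset of $(\xR \times X^*, \tau_{\xR} \times w^*)$ is compact iff it is closed in $\tau_{\xR}\times w^*$ and norm-bounded. Norm-boundedness of $\underline{d}\mathcal{I}(u)$ is immediate from the growth estimates, since $|A| \le \int_\Omega |a|\,dx$ and, by H\"older's inequality, $\|x^*\| \le \|v_1\|_{p'} + \|v_2\|_{p'}$, both bounded uniformly over $\mathcal{S}$ by quantities depending only on the dominating functions and $u$. Rather than verify closedness directly, the cleaner route is to prove that $\mathcal{S}$ is weakly compact in $\xL^1 \times \xL^{p'} \times \xL^{p'}$ and that $T$ is continuous from the weak topology of this product into $\tau_{\xR} \times w^*$; then $\underline{d}\mathcal{I}(u) = T(\mathcal{S})$ is compact as a continuous image of a compact set.

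The weak compactness of $\mathcal{S}$ is the heart of the argument and the step I expect to be the main obstacle, precisely because the first coordinate lives in the non-reflexive space $\xL^1$. The key observation is that every selection is dominated by fixed functions: $|a(x)| \le \beta(x) + C(|u(x)|^p + |\nabla u(x)|^p) =: \rho(x) \in \xL^1(\Omega)$ and $\max\{|v_1(x)|, |v_2(x)|\} \le \gamma(x) + C(|u(x)|^{p-1} + |\nabla u(x)|^{p-1}) =: \sigma(x) \in \xL^{p'}(\Omega)$ (for $p = +\infty$ all three coordinates are dominated by a single $\beta_N \in \xL^1$). Domination by the fixed $\xL^1$ function $\rho$ makes the first coordinates uniformly integrable, hence relatively weakly compact in $\xL^1$ by the Dunford--Pettis theorem; the coordinates $v_1, v_2$ lie in a ball of the reflexive space $\xL^{p'}$ when $1 < p < +\infty$ and are thus relatively weakly compact, while for $p = +\infty$ the same domination and Dunford--Pettis argument applies to them as well. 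Therefore $\mathcal{S}$ is contained in a product of relatively weakly compact sets, which is weakly compact. Moreover $\mathcal{S}$ is strongly closed (if selections converge in norm, a subsequence converges a.e.\ and the a.e.\ limit lies in the closed set $K(x)$), and, being convex, it is then weakly closed by Mazur's theorem; a weakly closed subset of a weakly compact set is weakly compact, so $\mathcal{S}$ is weakly compact. The continuity of $T$ is then routine: for fixed $h \in \xWsp^{1,p}(\Omega; \xR^m)$ one has $h, \nabla h \in \xL^p = (\xL^{p'})^*$ (and $h, \nabla h \in \xL^\infty$ when $p = +\infty$), so $(v_1, v_2) \mapsto \langle x^*, h\rangle$ is weakly continuous, and $a \mapsto \int_\Omega a\,dx$ is $\sigma(\xL^1, \xL^\infty)$-continuous because the constant function $1$ belongs to $\xL^\infty(\Omega)$.

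Finally, for the normalisation $\max\{ A \colon (A, x^*) \in \underline{d} \mathcal{I}(u) \} = 0$ I would use the pointwise property of the integrand's hypodifferential: by the Remark following Definition~\ref{def:CodiffFunc}, for a.e.\ $x$ one has $a \le 0$ for every $(a, v_1, v_2) \in K(x)$ and $\max\{ a \colon (a, v_1, v_2) \in K(x) \} = 0$. Consequently $A = \int_\Omega a\,dx \le 0$ for every selection, and to see that $0$ is attained I would extract, via a measurable maximum/selection argument (as Filippov's theorem was used in Lemma~\ref{lem:AlmostCodiff}), a measurable selection with $a(x) \equiv 0$ a.e., giving $A = 0$. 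The symmetric statement for $\overline{d}\mathcal{I}(u)$, with signs reversed, follows in exactly the same way, as does the whole argument for $\overline{d}\mathcal{I}(u)$.
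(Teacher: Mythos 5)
Your proposal is correct and takes essentially the same route as the paper's own proof: convexity via pointwise convex combinations of selections, the normalisation via Filippov's theorem, and compactness by showing that the set of measurable selections is weakly compact in $\xL^1 \times \xL^{p'} \times \xL^{p'}$ (Dunford--Pettis for the $\xL^1$ coordinate, reflexivity of $\xL^{p'}$ for the others, Mazur plus a.e.\ convergence and closedness of the values to identify the limit as a selection) and pushing it forward through the continuous linear map $\mathcal{T}$; your Tychonoff-plus-weak-closedness phrasing is only a cosmetic variant of the paper's Eberlein--\v{S}mulian sequential argument. The one point worth flagging is that, exactly like the paper's written proof, your argument covers only $1 < p \le +\infty$ (for $p = 1$ one has $p' = +\infty$ and norm-balls of $\xL^{\infty}$ are not weakly compact, so the $v_1, v_2$ coordinates need a different treatment), a case the paper disposes of by citing Lemmas~5.4 and 5.6 of \cite{Dolgopolik_CalcVar}; you should state this restriction explicitly rather than leave the $p = 1$ case silently uncovered.
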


\begin{proof}
Fix any $u \in \xWsp^{1, p}(\Omega; \xR^m)$. We prove this lemma only for the hypodifferential 
$\underline{d} \mathcal{I}(u)$, since the proof for the hyperdifferential $\overline{d} \mathcal{I}(u)$ is exactly the
same. 

Choose any $(A_1, x_1^*), (A_2, x_2^*) \in \underline{d} \mathcal{I}(u)$, and let 
$z_i(\cdot) = (a_i(\cdot), v_{1i}(\cdot), v_{2i}(\cdot))$ be a measurable selection of the set-valued mapping 
$\underline{d}_{u, \xi} f(\cdot, u(\cdot), \nabla u(\cdot))$ corresponding to $(A_i, x_i^*)$, $i \in \{ 1, 2 \}$. For
a.e. $x \in \Omega$ the set $\underline{d}_{u, \xi} f(x, u(x), \nabla u(x))$ is convex by definition.
Consequently, for any $\alpha \in [0, 1]$ the map $\alpha z_1(\cdot) + (1 - \alpha) z_2(\cdot)$ is a measurable
selection of the set-valued map $\underline{d}_{u, \xi} f(\cdot, u(\cdot), \nabla u(\cdot))$, which obviously
corresponds to the pair $Z_{\alpha} = (\alpha A_1 + (1 - \alpha) A_2, \alpha x_1^* + (1 - \alpha) x_2^*))$. Therefore, 
$Z_{\alpha} \in \underline{d} \mathcal{I}(u)$ for any $\alpha \in [0, 1]$ and one can conclude that the set
$\underline{d} \mathcal{I}(u)$ is convex.

By the definition of codifferential for a.e. $x \in \Omega$ and 
for all $(a, v_1, v_2) \in \underline{d}_{u, \xi} f(x, u(x), \nabla u(x))$ one
has $a \le 0$, which obviously implies that for any $(A, x^*) \in \underline{d} \mathcal{I}(u)$ one has $A \le 0$.
Furthermore, by definition $\max\{ a \mid (a, v_1, v_2) \in \underline{d}_{u, \xi} f(x, u(x), \nabla u(x)) \} = 0$ 
for a.e. $x \in \Omega$, that is, for a.e. $x \in \Omega$ one has 
$0 \in \{ a \mid (a, v_1, v_2) \in \underline{d}_{u, \xi} f(x, u(x), \nabla u(x)) \}$. As was noted in the proof of
Lemma~\ref{lem:AlmostCodiff}, the multifunction $\underline{d}_{u, \xi} f(\cdot, u(\cdot), \nabla u(\cdot))$ is
measurable. Therefore, by Filippov's theorem \cite[Thrm.~8.2.10]{AubinFrankowska} there exists a measurable selection
$(a_0(\cdot), v_{10}(\cdot), v_{20}(\cdot))$ of $\underline{d}_{u, \xi} f(\cdot, u(\cdot), \nabla u(\cdot))$ such that
$a_0(x) = 0$ for a.e. $x \in \Omega$. Consequently, one has $(0, x^*) \in \underline{d} \mathcal{I}(u)$ for $x^*$
defined as
$$
  \langle x^*, h \rangle = 
  \int_{\Omega} \big( \langle v_{10}(x), h(x) \rangle + \langle v_{20}(x), \nabla h(x) \rangle \big) \, dx 
  \quad \forall h \in \xWsp^{1, p}(\Omega; \xR^m),
$$
which yields $\max\{ A \colon (A, x^*) \in \underline{d} \mathcal{I}(u) \} = 0$.

Now we turn to the proof of the compactness of $\underline{d} \mathcal{I}(u)$. We consider two cases.

\textbf{Case $1 < p < + \infty$.} Denote by $\mathcal{F}$ the set of all measurable selections of the set-valued map 
$\underline{d}_{u, \xi} f(\cdot, u(\cdot), \nabla u(\cdot))$. By the codifferentiability conditions 
(see Def.~\ref{def:CodiffCond}) there exist $C > 0$ and a.e. nonnegative functions $\beta \in \xL^1(\Omega)$ and 
$\gamma \in \xL^{p'}(\Omega)$ such that for any $(a(\cdot), v_1(\cdot), v_2(\cdot)) \in \mathcal{F}$ and for a.e. 
$x \in \Omega$ one has
\begin{equation} \label{eq:CodiffGrowth_FiniteP}
  |a(x)| \le \beta(x) + C \big( |u(x)|^p + |\nabla u(x)|^p \big), \quad
  \max\{ |v_1(x)|, |v_2(x)| \} \le \gamma(x) + C \big( |u(x)|^{p - 1} + |\nabla u(x)|^{p - 1} \big).
\end{equation}
Observe that the right-hand side of the first inequality belongs to $\xL^1(\Omega)$, while the right-hand side of the
second inequality belongs to $\xL^{p'}(\Omega)$ by virtue of the facts that $u \in \xWsp^{1, p}(\Omega; \xR^m)$ and
$p'(p - 1) = p$. Thus, $\mathcal{F}$ is a bounded subset of the space 
$X = \xL^1(\Omega) \times \xL^{p'}(\Omega; \xR^m) \times \xL^{p'}(\Omega; \xR^{m \times d})$.

For any $(a(\cdot), v_1(\cdot), v_2(\cdot)) \in \mathcal{F}$ denote by 
$\mathcal{T}(a(\cdot), v_1(\cdot), v_2(\cdot))$ the pair 
$(A, x^*) \in \xR \times (\xWsp^{1, p}(\Omega; \xR^m))^*)$ such that $A = \int_{\Omega} a(x) \, dx$ and
$$
  \langle x^*, h \rangle 
  = \int_{\Omega} \big( \langle v_1(x), h(x) \rangle + \langle v_2(x), \nabla h(x) \rangle \big) \, dx
  \quad \forall h \in \xWsp^{1, p}(\Omega; \xR^m).
$$
Clearly, $\mathcal{T}(\mathcal{F}) = \underline{d} \mathcal{I}(u)$ (see~\eqref{eq:HypodiffIntegralFunc}).
Furthermore, one can easily verify that $\mathcal{T}$ is a continuous linear operator from the vector space 
$X = \xL^1(\Omega) \times \xL^{p'}(\Omega; \xR^m) \times \xL^{p'}(\Omega; \xR^{m \times d})$ endowed with
the weak topology to the space $(\xR \times Y^*, \tau_{\xR} \times \sigma(Y^*, Y))$ with 
$Y = \xWsp^{1, p}(\Omega; \xR^m)$. Therefore, it is sufficient to check that the set $\mathcal{F}$ is weakly
compact. Then one can conclude that the set $\underline{d} \mathcal{I}(u)$ is compact in the topology 
$\tau_{\xR} \times \sigma(Y^*, Y)$ as the image of the compact set $\mathcal{F}$ under the continuous map $\mathcal{T}$.

By the by the Eberlein-\v{S}mulian theorem it suffice to verify that $\mathcal{F}$ is weakly sequentially compact.
Choose any sequence $z_n(\cdot) = (a_n(\cdot), v_{1n}(\cdot), v_{2n}(\cdot)) \in \mathcal{F}$, $n \in \mathbb{N}$. From
the second inequality in \eqref{eq:CodiffGrowth_FiniteP} it follows that the sequence 
$\{ (v_{1n}(\cdot), v_{2n}(\cdot)) \}$ is bounded in 
$\xL^{p'}(\Omega; \xR^m) \times \xL^{p'}(\Omega; \xR^{m \times d})$. Hence taking into account the fact that
the space $\xL^{p'}(\Omega)$ is reflexive (recall that $1 < p < + \infty$, which yields $1 < p' < + \infty$) one obtains
that there exists a subsequence $\{ (v_{1 n_k}(\cdot), v_{2 n_k}(\cdot)) \}$ weakly converging to some 
$(v_1(\cdot), v_2(\cdot))$ in $\xL^{p'}(\Omega; \xR^m) \times \xL^{p'}(\Omega; \xR^{m \times d})$.

Let us now turn to the sequence $\{ a_n(\cdot) \}$. Denote 
$g(\cdot) = \beta(\cdot) + C (|u(\cdot)|^p + |\nabla u(\cdot)|^p)$ (see \eqref{eq:CodiffGrowth_FiniteP}). Clearly, 
$g \in \xL^1(\Omega)$ and $\int_{|a_{n_k}| > g} |a_{n_k}(x)| dx = 0 < \varepsilon$ for any $\varepsilon > 0$ and all
$k \in \mathbb{N}$. Therefore, by \cite[Thrm.~4.7.20]{Bogachev} the closure of the set 
$\{ a_{n_k}(\cdot) \}_{n \in \mathbb{N}}$ in the weak topology is weakly compact in $\xL^1(\Omega)$ or, equivalently,
weakly sequentially compact in $\xL^1(\Omega)$ by the Eberlein-\v{S}mulian theorem.  Consequently, one can extract a
subsequence of the sequence $\{ a_{n_k}(\cdot) \}$, which we denote again by $\{ a_{n_k}(\cdot) \}$, weakly converging
to some $a \in \xL^1(\Omega)$.

Observe that the subsequence $z_{n_k}(\cdot) = (a_{n_k}(\cdot), v_{1 n_k}(\cdot), v_{2 n_k}(\cdot))$, 
$k \in \mathbb{N}$, weakly converges to the function $z(\cdot) = (a(\cdot), v_1(\cdot), v_2(\cdot))$ in 
$X = \xL^1(\Omega) \times \xL^{p'}(\Omega; \xR^m) \times \xL^{p'}(\Omega; \xR^{m \times d})$. By
Mazur's lemma there exists a sequence $\widehat{z}_k(\cdot)$ of convex combinations of elements of the sequence 
$z_{n_k}(\cdot)$ strongly converging to $z(\cdot)$. As is well-known (see, e.g. \cite[Exercise~6.9]{Folland}), one can
extract a subsequence $\{ \widehat{z}_{k_l}(\cdot) \}$ that converges to $z(\cdot)$ almost everywhere. From the
convexity of the hypodifferential $\underline{d}_{u, \xi} f(\cdot)$ it follows that $\widehat{z}_k(\cdot)$ is a
measurable selection of the multifunction $\underline{d}_{u, \xi} f(\cdot, u(\cdot), \nabla u(\cdot))$ for any 
$k \in \mathbb{N}$. Hence bearing in mind the fact that 
the hypodifferential $\underline{d}_{u, \xi} f(x, u(x), \nabla u(x))$ is closed for a.e. $x \in \Omega$ one obtains that
$z(\cdot)$ is a measurable selection of $\underline{d}_{u, \xi} f(\cdot, u(\cdot), \nabla u(\cdot))$, i.e. $z(\cdot)
\in \mathcal{F}$. Thus, we found a subsequence of the original sequence $\{ z_n(\cdot) \} \subset \mathcal{F}$ weakly
converging to an element of $\mathcal{F}$. In other words, $\mathcal{F}$ is weakly sequentially compact. 

\textbf{Case $p = + \infty$.} Let, as above, $\mathcal{F}$ be the set of all measurable selections of the map
$\underline{d}_{u, \xi} f(\cdot, u(\cdot), \nabla u(\cdot))$. By the codifferentiability conditions
(Def.~\ref{def:CodiffCond}) there there exists an a.e. nonnegative function $\beta_N \in \xL^1(\Omega)$ such that 
for any $(a(\cdot), v_1(\cdot), v_2(\cdot)) \in \mathcal{F}$ one has
\begin{equation} \label{eq:CodiffGrowthInftyCl}
  \max\big\{ |a(x)|, |v_1(x)|, |v_2(x)| \big\} \le \beta_N(x)
\end{equation}
for a.e. $x \in \Omega$ (here $N = \| u \|_{1, \infty}$). Thus, $\mathcal{F}$ is a bounded subset of the space
$X = \xL^1(\Omega; \xR \times \xR^m \times \xR^{m \times d})$. 

Let the operator $\mathcal{T}$ be defined as in the case $1 < p < + \infty$. Then 
$\mathcal{T}(\mathcal{F}) = \underline{d} \mathcal{I}(u)$ and, as is easily seen, $\mathcal{T}$ is a continuous linear
operator from the space $X$ equipped with the weak topology to the product space 
$(\xR \times Y^*, \tau_{\xR} \times \sigma(Y^*, Y))$ with $Y = \xWsp^{1, \infty}(\Omega; \xR^m)$.
Therefore, it suffice to check that the set $\mathcal{F}$ is weakly compact in $X$. Then one can conclude that the set 
$\underline{d} \mathcal{I}(u)$ is compact as the continuous image of a compact set.

From \eqref{eq:CodiffGrowthInftyCl} it follows that for any $\varepsilon > 0$ and for all
$(a(\cdot), v_1(\cdot), v_2(\cdot)) \in \mathcal{F}$ one has
$$
  \int_{|a| > \beta_N} |a| \, d \mu = 0 < \varepsilon, \quad
  \int_{|v_1| > \beta_N} |v_1| \, d \mu = 0 < \varepsilon, \quad
  \int_{|v_2| > \beta_N} |v_2| \, d \mu = 0 < \varepsilon,
$$
where $\mu$ is the Lebesgue measure. Consequently, by \cite[Thrm.~4.7.20]{Bogachev} the closure of the set $\mathcal{F}$
in the weak topology is weakly compact in $X$, which by the Eberlein-\v{S}mulian theorem implies that it is weakly
sequentially compact. Let us verify that the set $\mathcal{F}$ itself is weakly sequentially compact. Then by applying
the Eberlein-\v{S}mulian theorem once again we arrive at the desired result.

Indeed, let $\{ z_n \} \subset \mathcal{F}$ be an arbitrary sequence. By the weak sequential compactness of the weak
closure of $\mathcal{F}$ there exists a subsequence $\{ z_{n_k} \}$ weakly converging to some $z \in X$. By Mazur's
lemma there exists a sequence of convex combinations $\{ \widehat{z}_k \}$ of elements of the sequence 
$\{ z_{n_k} \}$ strongly converging to $z$, which implies that there exists a subsequence $\{ \widehat{z}_{k_l} \}$
converging to $z$ almost everywhere. Observe that each triplet $\widehat{z}_{k_l}(\cdot)$ is a
measurable selection of $\underline{d}_{u, \xi} f(\cdot, u(\cdot), \nabla u(\cdot))$ due to the definition of
$\mathcal{F}$ and the fact that this multifunction is convex-valued. Therefore, bearing in mind the fact that the set
$\underline{d}_{u, \xi} f(x, u(x), \nabla u(x))$ is closed for a.e. $x \in \Omega$ one obtains that 
$z(\cdot)$ is a measurable selection of the multifunction $\underline{d}_{u, \xi} f(\cdot, u(\cdot), \nabla u(\cdot))$.
Thus, $z \in \mathcal{F}$, i.e. the subsequence $\{ z_{n_k} \}$ weakly converges to an element of the set $\mathcal{F}$,
which means that this set is weakly sequentially compact.
\end{proof}

Let us finally prove that the functional $\mathcal{I}$ is, in fact, \textit{continuously} codifferentiable. For any
subset $C$ of a metric space $(M, d)$ and a point $x \in M$ denote $\dist(x, C) = \inf_{y \in C} d(x, y)$.

Let us underline that in our earlier paper \cite{Dolgopolik_CalcVar} the continuous codifferentiability of the
functional $\mathcal{I}$ was proved only in the case when the set $\Omega$ is bounded and $p = + \infty$
(see \cite[Thrm.~5.7 and Remark~5.8]{Dolgopolik_CalcVar}). Here we extend this result to the case of unbounded domains
and arbitrary $p \in [1, + \infty]$ by utilising Vitali's theorem characterising convergence in $\xL^{p}$-spaces (see,
e.g. \cite[Thrm.~III.6.15]{DunfordSchwartz}), instead of relying on certain compactness arguments as it is done in 
\cite{Dolgopolik_CalcVar}.

\begin{lemma}
Let $f$ satisfy the codifferentiability conditions of order $p \in [1, + \infty]$ and suppose that either 
$1 < p \le + \infty$ or the set-valued maps $\underline{d}_{u, \xi} f(\cdot)$ and $\overline{d}_{u, \xi} f(\cdot)$ have
the form \eqref{eq:IntegrandCodiffL1} for some vectors 
$v_{1i}, w_{1j} \in \xR^m$, $v_{2i}, w_{2j} \in \xR^{m \times d}$, and Carath\'{e}odory
functions $f_i$ and $g_j$, $i \in I = \{ 1, \ldots, \ell \}$ and $j \in J = \{ 1, \ldots, r \}$. Then the set-valued
mappings $\underline{d} \mathcal{I}(\cdot)$ and $\overline{d} \mathcal{I}(\cdot)$ defined in
Theorem~\ref{thm:CodiffIntegralFunctional} are Hausdorff continuous.
\end{lemma}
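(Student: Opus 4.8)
The plan is to prove \emph{sequential} Hausdorff continuity of the mappings $\underline{d} \mathcal{I}(\cdot)$ and $\overline{d} \mathcal{I}(\cdot)$ at an arbitrary point $u \in \xWsp^{1, p}(\Omega; \xR^m)$; since $\xWsp^{1, p}(\Omega; \xR^m)$ is a metric space and the values of $d_H$ lie in $\xR$, this is equivalent to the $\varepsilon$--$\delta$ Hausdorff continuity demanded by the definition, and the argument for $\overline{d} \mathcal{I}$ repeats verbatim the one for $\underline{d} \mathcal{I}$. Fix a sequence $u_n \to u$ in $\xWsp^{1, p}(\Omega; \xR^m)$ and introduce the pointwise Hausdorff distance $\delta_n(x) = d_H\big( \underline{d}_{u, \xi} f(x, u_n(x), \nabla u_n(x)), \underline{d}_{u, \xi} f(x, u(x), \nabla u(x)) \big)$, computed with the Euclidean norm on $\xR \times \xR^m \times \xR^{m \times d}$. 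The whole proof rests on estimating $d_H(\underline{d} \mathcal{I}(u_n), \underline{d} \mathcal{I}(u))$, measured by the norm $|A| + \| x^* \|_{(\xWsp^{1, p})^*}$, through $\delta_n$, and on the observation that the dual norm of the functional associated via \eqref{eq:HypodiffIntegralFunc} with a selection $(a, v_1, v_2)$ is controlled by $\| v_1 \|_{p'} + \| v_2 \|_{p'}$ through H\"older's inequality.

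For the case $1 < p \le + \infty$ I would argue along subsequences: it suffices to show that every subsequence of $\{ u_n \}$ admits a further subsequence along which $d_H(\underline{d} \mathcal{I}(u_n), \underline{d} \mathcal{I}(u)) \to 0$. Passing to such a subsequence, I may assume $u_n(x) \to u(x)$ and $\nabla u_n(x) \to \nabla u(x)$ for a.e.\ $x$ (from $\xL^p$-convergence if $p < + \infty$, directly if $p = + \infty$). Since $D_{u, \xi} f(\cdot)$ is a Carath\'eodory map and its values are compact finite-dimensional sets, the map $(u, \xi) \mapsto \underline{d}_{u, \xi} f(x, u, \xi)$ is Hausdorff continuous for a.e.\ $x$, whence $\delta_n(x) \to 0$ for a.e.\ $x$. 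Given any $(A_n, x_n^*) \in \underline{d} \mathcal{I}(u_n)$ with representing selection $(a_n, v_{1n}, v_{2n})$, I would let $(a, v_1, v_2)$ be the measurable \emph{nearest-point} selection of $\underline{d}_{u, \xi} f(\cdot, u(\cdot), \nabla u(\cdot))$ to $(a_n, v_{1n}, v_{2n})$; its existence and measurability follow from Filippov's theorem \cite[Thrm.~8.2.10]{AubinFrankowska}. Since $(a_n, v_{1n}, v_{2n})(x)$ lies in the first set, the pointwise bound $|a_n(x) - a(x)| + |(v_{1n}, v_{2n})(x) - (v_1, v_2)(x)| \le \sqrt{2}\, \delta_n(x)$ holds, and the corresponding $(A, x^*) \in \underline{d} \mathcal{I}(u)$ satisfies $|A_n - A| \le \int_{\Omega} |a_n - a| \, dx$ and $\| x_n^* - x^* \| \le C(\| v_{1n} - v_1 \|_{p'} + \| v_{2n} - v_2 \|_{p'})$. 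Both right-hand sides tend to $0$: the integrands converge to $0$ a.e.\ (being dominated by $\delta_n$), while the families $\{ |a_n - a| \}$ and $\{ |(v_{1n}, v_{2n}) - (v_1, v_2)|^{p'} \}$ are uniformly integrable, because the growth conditions of Definition~\ref{def:CodiffCond} bound them by $\xL^1$ functions built from $\beta$, $\gamma$ and $|u_n|^p$, $|\nabla u_n|^p$ (using $(p - 1) p' = p$), and $|u_n|^p \to |u|^p$ in $\xL^1$. Vitali's convergence theorem (or dominated convergence when $p = + \infty$, so $p' = 1$) then gives the limit, and exchanging the roles of $u_n$ and $u$ controls the other term of the Hausdorff distance.

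In the remaining case the maps have the form \eqref{eq:IntegrandCodiffL1}, so every measurable selection of $\underline{d}_{u, \xi} f(\cdot, u(\cdot), \nabla u(\cdot))$ can be written as $\sum_{i \in I} \lambda_i(x) \big( f_i(x, u(x), \nabla u(x)), v_{1i}, v_{2i} \big)$ with measurable weights $\lambda_i \ge 0$, $\sum_i \lambda_i = 1$, and \emph{constant} vectors $v_{1i}, v_{2i}$. Given a point of $\underline{d} \mathcal{I}(u_n)$ with weights $\lambda_i$, I would reuse the very same weights to build a point of $\underline{d} \mathcal{I}(u)$. The gradient parts $v_1 = \sum_i \lambda_i v_{1i}$ and $v_2 = \sum_i \lambda_i v_{2i}$ then coincide, so the associated functionals are identical and only the scalar parts differ, by at most $\int_{\Omega} \sum_i \lambda_i(x) |f_i(x, u_n, \nabla u_n) - f_i(x, u, \nabla u)| \, dx \le \sum_{i \in I} \| f_i(\cdot, u_n, \nabla u_n) - f_i(\cdot, u, \nabla u) \|_1$. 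Each summand tends to $0$ by continuity of the Nemytskii operator generated by the Carath\'eodory function $f_i$ under the order-$1$ growth condition, and the estimate is symmetric in $u_n, u$, so $d_H(\underline{d} \mathcal{I}(u_n), \underline{d} \mathcal{I}(u))$ is bounded by this vanishing sum.

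The hard part is the case $1 < p < + \infty$: one must produce a \emph{single} measurable selection of the limit multifunction that is simultaneously $\xL^1$-close in the scalar coordinate and $\xL^{p'}$-close in the gradient coordinate, and then pass to the limit \emph{without} a fixed dominating function, which is unavailable on an unbounded $\Omega$. The resolution is to replace dominated convergence by Vitali's theorem, drawing the required uniform integrability from the $\xL^p$-convergence $u_n \to u$ together with the identity $(p - 1) p' = p$. This also explains why $p = 1$ is excluded from the first case and handled via \eqref{eq:IntegrandCodiffL1}: there the dual pairing would force $\xL^\infty$-convergence of the gradient parts, which cannot be extracted from a.e.\ convergence, whereas fixing the vectors $v_{1i}, v_{2i}$ removes this difficulty entirely.
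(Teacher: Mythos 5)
Your proposal is correct and follows essentially the same route as the paper's proof: pass to an a.e.-convergent subsequence, use a measurable nearest-point selection onto the limit multifunction together with the Hausdorff continuity of the Carath\'{e}odory map and Vitali's (resp.\ dominated) convergence theorem for $1 < p \le + \infty$, handle the two halves of the Hausdorff distance symmetrically, and reuse the convex weights from \eqref{eq:IntegrandCodiffL1} in the case $p = 1$ so that only the scalar coordinates differ. The one quibble is that the measurability of the nearest-point selection rests on the marginal-map theorem \cite[Thrm.~8.2.11]{AubinFrankowska} plus a measurable selection theorem rather than on Filippov's theorem, which is the tool the paper reserves for representing selections through the weights in the $p = 1$ case.
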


\begin{proof}
We prove the statement of the lemma only for the hypodifferential mapping $\underline{d} \mathcal{I}(\cdot)$, since the
proof of the lemma for $\overline{d} \mathcal{I}(\cdot)$ is exactly the same.

Arguing by reductio ad absurdum, suppose that the multifunction $\underline{d} \mathcal{I}(\cdot)$ is not Hausdorff
continuous at a point $u \in \xWsp^{1, p}(\Omega; \xR^m)$. Then there exist $\theta > 0$ and a sequence
$\{ u_n \} \subset \xWsp^{1, p}(\Omega; \xR^m)$ converging to $u$ such that 
$d_H( \underline{d} \mathcal{I}(u_n), \underline{d} \mathcal{I}(u) ) > \theta$ for all $n \in \mathbb{N}$. Replacing, if
necessary, the sequence $\{ u_n \}$ with its subsequence, one can suppose that $u_n$ converges to $u$ almost everywhere
and $\nabla u_n$ converges to $\nabla u$ almost everywhere.

By the definition Hausdorff distance (see~\eqref{eq:HausdorffDist}), two cases are possible. Namely, there exists a
subsequence, which we denote again by $\{ u_n \}$, such that one of the following inequalities hold true:
\begin{gather} \label{eq:HausdDiscont_1st}
  \sup_{(B, y^*) \in \underline{d} \mathcal{I}(u_n)} \inf_{(A, x^*) \in \underline{d} \mathcal{I}(u)}
  \sqrt{|B - A|^2 + \| y^* - x^* \|^2} > \theta \\
  \sup_{(A, x^*) \in \underline{d} \mathcal{I}(u)} \inf_{(B, y^*) \in \underline{d} \mathcal{I}(u_n)}
  \sqrt{|B - A|^2 + \| y^* - x^* \|^2} > \theta.	\label{eq:HausdDiscont_2nd}
\end{gather}
We start with the first case.

\textbf{Case I.} From \eqref{eq:HausdDiscont_1st} it follows that for any $n \in \mathbb{N}$
there exists $(A_n, x_n^*) \in \underline{d} \mathcal{I}(u_n)$ satisfying the inequality 
$\dist((A_n, x_n^*), \underline{d} \mathcal{I}(u)) \ge \theta$. 
Denote by $z_n(\cdot) = (a_n(\cdot), v_{1n}(\cdot), v_{2n}(\cdot))$ a measurable selection of the multifunction 
$\underline{d}_{u, \xi} f(\cdot, u_n(\cdot), \nabla u_n(\cdot))$ corresponding to the pair $(A_n, x_n^*)$
(see~\eqref{eq:HypodiffIntegralFunc}). We consider the cases $p > 1$ and $p = 1$ separately

\textbf{Case I, $1 < p = + \infty$.} Recall that $\underline{d}_{u, \xi} f(\cdot, u(\cdot), \nabla u(\cdot))$ is a
convex and compact-valued multifunction. Furthermore, as was shown in the proof of Lemma~\ref{lem:AlmostCodiff}, the
codifferentiability conditions guarantee that this multifunction is measurable. Therefore, for any $n \in \mathbb{N}$
and for a.e. $x \in \Omega$ the set 
\begin{multline*}
  R_n(x) = \Big\{ (a, v_1, v_2) \in \underline{d}_{u, \xi} f(x, u(x), \nabla u(x)) \Bigm| \\
  \dist\big(z_n(x), \underline{d}_{u, \xi} f(x, u(x), \nabla u(x)) \big)^2 = 
  |a_n(x) - a|^2 + |v_{1n}(x) - v_1|^2 + |v_{2n}(x) - v_2|^2 \Big\}
\end{multline*}
(i.e. $R_n(x)$ is the set of points at which the infimum in the definition of the distance between $z_n(x)$ and
the set $\underline{d}_{u, \xi} f(x, u(x), \nabla u(x))$ is attained) is nonempty and the set-valued mapping
$R_n(\cdot)$ is measurable by \cite[Thrm.~8.2.11]{AubinFrankowska}.

Let $z^0_n(\cdot)$ be any measurable selection of the multifunction $R_n(\cdot)$, which exists by
\cite[Thrm.~8.1.3]{AubinFrankowska}. Define function
$\widehat{z}_n(\cdot) = (\widehat{a}_n(\cdot), \widehat{v}_{1n}(\cdot), \widehat{v}_{2n}(\cdot))$ as follows:
$$
  \widehat{z}_n(x) =
  \begin{cases}
    z^0_n(x), & \text{if } z_n(x) \notin \underline{d}_{u, \xi} f(x, u(x), \nabla u(x)), \\
    z_n(x), & \text{othwerwise}.
  \end{cases}
$$
Clearly, $\widehat{z}_n(\cdot)$ is a selection of the multifunction 
$\underline{d}_{u, \xi} f(\cdot, u(\cdot), \nabla u(\cdot))$. Furthermore, it is measurable due to the fact that the
set of all those $x \in \Omega$ for which 
$z_n(x) \notin \underline{d}_{u, \xi} f(x, u(x), \nabla u(x))$ is measurable by
\cite[Crlr.~8.2.13, part~2]{AubinFrankowska}.

By the codifferentiability conditions (see~Def.~\ref{def:CodiffCond}) the multifunction 
$\underline{d}_{u, \xi} f(\cdot)$ is a Carath\'{e}odory map. Thus, for a.e. $x \in \Omega$ the set-valued map 
$(u, \xi) \mapsto \underline{d}_{u, \xi} f(x, u, \xi)$ is continuous. Therefore, for a.e. $x \in \Omega$ one has 
$$
  \lim_{n \to \infty} d_H(\underline{d}_{u, \xi} f(x, u_n(x), \nabla u_n(x)), 
  \underline{d}_{u, \xi} f(x, u(x), \nabla u(x))) = 0.
$$
Hence, in particular, $\dist(z_n(x), \underline{d}_{u, \xi} f(x, u(x), \nabla u(x)) \to 0$ as $n \to \infty$,
which implies that the sequence $\{ z_n - \widehat{z}_n \}$ converges to zero almost everywhere. Let us prove that this
sequence converges to zero in 
$\xL^1(\Omega) \times \xL^{p'}(\Omega; \xR^m) \times \xL^{p'}(\Omega; \xR^{m \times d})$. To this end, we shall
utilise Vitali's theorem characterising convergence in $\xL^{p}$-spaces with $1 \le p < + \infty$ (see, e.g. 
\cite[Theorem~III.6.15]{DunfordSchwartz}). Note that $1 \le p' < + \infty$, since we consider 
the case $1 < p \le + \infty$.

Fix any $\varepsilon > 0$. By the growth condition on the codifferential mapping $D_{u, \xi} f$
(see~Def.~\ref{def:CodiffCond}) there exist $C > 0$ and an a.e. nonnegative function $\beta \in \xL^1(\Omega)$ such
that
\begin{equation} \label{CodiffGrowthVitali}
  |a_n(x) - \widehat{a}_n(x)| \le 2 \beta(x) + C \big( |u(x)|^p + |u_n(x)|^p + |\nabla u(x)|^p + |\nabla u_n(x)|^p \big)
\end{equation}
for a.e. $x \in \Omega$ in the case $1 < p < + \infty$, and there exists an a.e. nonnegative function
$\beta_N \in \xL^1(\Omega)$ such that $|a_n(x) - \widehat{a}_n(x)| \le 2 \beta_N(x)$ for a.e. $x \in \Omega$ in the
case
$p = + \infty$ (here $N = \max_{n \in \mathbb{N}}\{ \| u \|_{1, \infty}, \| u_n \|_{1, \infty} \}$). If $p = + \infty$,
then the sequence $\{ a_n - \widehat{a}_n \}$ converges to zero in $\xL^1(\Omega)$ by Lebesgue's dominated convergence
theorem. Therefore, let us consider the case $1 < p < + \infty$.

By the absolute continuity of the Lebesgue integral there exists $\delta_1 > 0$ such that for any measurable set 
$D \subseteq \Omega$ with $\mu(D) < \delta_1$ (here $\mu$ is the Lebesgue measure) one has
$$
  \int_D \beta d \mu < \frac{\varepsilon}{10}, \quad
  \int_D |u|^p d \mu < \frac{\varepsilon}{5C}, \quad
  \int_D |\nabla u|^p d \mu < \frac{\varepsilon}{5C}.
$$
Moreover, by the ``only if'' part of the Vitali convergence theorem, the convergence of $u_n$ to $u$ in 
$\xWsp^{1, p}(\Omega; \xR^m)$ implies that there exists $\delta_2 > 0$ such that for any measurable set 
$D \subseteq \Omega$ with $\mu(D) < \delta_2$ one has
$$
  \int_D |u_n|^p d \mu < \frac{\varepsilon}{5C}, \quad
  \int_D |\nabla u_n|^p d \mu < \frac{\varepsilon}{5C} \quad \forall n \in \mathbb{N}.
$$
Hence with the use of \eqref{CodiffGrowthVitali} one obtains that for any measurable set $D \subseteq \Omega$ with
$\mu(D) < \min\{ \delta_1, \delta_2 \}$ one has $\int_D |a_n - \widehat{a}_n| d \mu < \varepsilon$ for 
all $n \in \mathbb{N}$.

Denote $\Omega_N = \{ x \in \Omega \mid |x| \le N \}$. From the fact that $\beta \in \xL^1(\Omega)$ and 
$u \in \xWsp^{1, p}(\Omega; \xR^m)$ it follows that there exists $N \in \mathbb{N}$ such that
$$
  \int_{\Omega \setminus \Omega_N} \beta d \mu < \frac{\varepsilon}{10}, \quad
  \int_{\Omega \setminus \Omega_N} |u|^p d \mu < \frac{\varepsilon}{5C}, \quad
  \int_{\Omega \setminus \Omega_N} |\nabla u|^p d \mu < \frac{\varepsilon}{5C}
$$
(see, e.g. \cite[Prp.~2.6.2]{Bogachev}). Furthermore, by the ``only if'' part of the Vitali convergence theorem there
exists a measurable set $E_{\varepsilon} \subseteq \Omega$ such that $\mu(E_{\varepsilon}) < + \infty$ and
$$
  \int_{\Omega \setminus E_{\varepsilon}} |u_n|^p d \mu < \frac{\varepsilon}{5C}, \quad
  \int_{\Omega \setminus E_{\varepsilon}} |\nabla u_n|^p d \mu < \frac{\varepsilon}{5C} 
  \quad \forall n \in \mathbb{N}.
$$
Therefore, by applying \eqref{CodiffGrowthVitali} one obtains that 
$\int_{\Omega \setminus \Omega_{\varepsilon}} |a_n - \widehat{a}_n| d \mu < \varepsilon$ for all $n \in \mathbb{N}$,
where $\Omega_{\varepsilon} = \Omega_N \cup E_{\varepsilon}$. Hence with the use of the ``if'' part of the Vitali
convergence theorem one concludes that the sequence $\{ a_n - \widehat{a}_n \}$ converges to zero in $\xL^1(\Omega)$.

Let us now consider the sequence $\{ v_{1n} - \widehat{v}_{1n} \}$. By the growth condition on the codifferential
mapping $D_{u, \xi} f$ (see~Def.~\ref{def:CodiffCond}) there exist $C > 0$ and a.e. nonnegative function 
$\gamma \in \xL^{p'}(\Omega)$ such that
\begin{align*}
  |v_{1n}(x) - \widehat{v}_{1n}(x)|^{p'} &\le 2^{p'} \big( |v_{1n}(x)|^{p'} + |\widehat{v}_{2n}(x)|^{p'} \big) \\
  &\le 2^{p'} 3^{p'} \Big( 2 |\gamma(x)|^{p'} 
  + C^{p'} \big( |u(x)|^p + |\nabla u(x)|^p + |u_n(x)|^p + |\nabla u_n(x)|^p \big) \Big)
\end{align*}
for a.e. $x \in \Omega$ in the case $1 < p < + \infty$, and there exists an a.e. nonnegative function 
$\beta_N \in \xL^1(\Omega)$ such that $|v_{1n}(x) - \widehat{v}_{1n}(x)| \le 2 \beta_N(x)$ for a.e. $x \in \Omega$ in
the
case $p = + \infty$. Now, arguing in the same way as above and applying Vitali's convergence theorem in the case 
$1 < p < + \infty$ and Lebesgue's dominated convergence theorem in the case $p = + \infty$ one can readily verify that
$\{ v_{1n} - \widehat{v}_{1n} \}$ converges to zero in $\xL^{p'}(\Omega; \xR^m)$. The convergence of
$\{ v_{2n} - \widehat{v}_{2n} \}$ to zero in $\xL^{p'}(\Omega; \xR^{m \times d})$ is proved in exactly the same
way.

Denote by $(\widehat{A}_n, \widehat{x}^*_n)$ the element of $\underline{d} \mathcal{I}(u)$ corresponding to 
the selection $\widehat{z}_n(\cdot) = (\widehat{a}_n(\cdot), \widehat{v}_{1n}(\cdot), \widehat{v}_{2n}(\cdot))$ of
the multifunction $\underline{d}_{u, \xi} f(\cdot, u(\cdot), \nabla u(\cdot))$ (see~\eqref{eq:HypodiffIntegralFunc}).
Let us check that $|A_n - \widehat{A}_n| + \| x_n^* - \widehat{x}_n^* \| \to 0$ as $n \to \infty$. Indeed, for 
any $n \in \mathbb{N}$ one has
$$
  |A_n - \widehat{A}_n| \le \int_{\Omega} |a_n(x) - \widehat{a}_n(x)| \, dx = \| a_n - \widehat{a}_n \|_1,
$$
which implies that $|A_n - \widehat{A}_n| \to 0$ as $n \to \infty$. Similarly, with the use of H\"{o}lder's inequality
for any $h \in \xWsp^{1, p}(\Omega; \xR^m)$ one has
\begin{align*}
  \big| \langle x_n^* - \widehat{x}_n^*, h \rangle \big| 
  &\le \int_{\Omega} \big| \langle v_{1n}(x) - \widehat{v}_{1n}(x), h(x) \rangle \big| \, dx
  + \int_{\Omega} \big| \langle v_{2n}(x) - \widehat{v}_{2n}(x), \nabla h(x) \rangle \big| \, dx \\
  &\le \Big( \| v_{1n} - \widehat{v}_{1n} \|_{p'} + \| v_{2n} - \widehat{v}_{2n} \|_{p'} \Big) \| h \|_{1, p},
\end{align*}
which implies that 
$\| x_n^* - \widehat{x}_n^* \| \le \| v_{1n} - \widehat{v}_{1n} \|_{p'} + \| v_{2n} - \widehat{v}_{2n} \|_{p'}$ for
all $n \in \mathbb{N}$, and $\| x_n^* - \widehat{x}_n^* \| \to 0$ as $n \to \infty$. Consequently, bearing in mind the
fact that $(\widehat{A}_n, \widehat{x}_n^*) \in \underline{d} \mathcal{I}(u)$ for all $n \in \mathbb{N}$ one obtains
that $\dist((A_n, x_n^*), \underline{d} \mathcal{I}(u)) \to 0$ as $n \to \infty$, which contradicts the inequality
$\dist((A_n, x_n^*), \underline{d} \mathcal{I}(u)) \ge \theta$. Thus, the proof of the first case for 
$1 < p \le + \infty$ is complete.

\textbf{Case I, $p = 1$.} Let $S^{\ell}$ be the standard (probability) simplex in $\xR^{\ell}$, i.e. 
$$
  S^{\ell} = \Big\{ \alpha = (\alpha^{(1)}, \ldots, \alpha^{(\ell)}) \in \xR^{\ell} \Bigm| 
  \alpha^{(1)} + \ldots + \alpha^{(\ell)} = 1, \: \alpha^{(i)} \ge 0 
  \enspace \forall i \in \{ 1, \ldots, \ell \} \Big\}.
$$
For any $\alpha \in \xR^{\ell}$, $x \in \Omega$, $u \in \xR^m$, and $\xi \in \xR^{m \times d}$
define
\begin{equation} \label{eq:HypodiffViaConvexHull}
  g(x, u, \xi, \alpha) = \sum_{i = 1}^l \alpha^{(i)} (f_i(x, u, \xi), v_{1i}, v_{2i}).
\end{equation}
It is easily seen that $g$ is a Carath\'{e}odory map and 
$g(x, u, \xi, S^{\ell}) = \underline{d}_{u, \xi} f(x, u, \xi)$ for all $(x, u, \xi)$ by the definition of convex hull
(see~\eqref{eq:IntegrandCodiffL1}). 

Recall that $z_n(\cdot) = (a_n(\cdot), v_{1n}(\cdot), v_{2n}(\cdot))$ is a measurable selection of the set-valued
map $\underline{d}_{u, \xi} f(\cdot, u_n(\cdot), \nabla u_n(\cdot))$ such that for the corresponding pair
$(A_n, x_n^*) \in \underline{d} \mathcal{I}(u_n)$ one has 
$\dist((A_n, x_n^*), \underline{d} \mathcal{I}(u)) \ge \theta$ for all $n \in \mathbb{N}$. By definition for any
$n \in \mathbb{N}$ and a.e. $x \in \Omega$ one has 
$z_n(x) \in g(x, u_n(x), \nabla u_n(x), S^{\ell})$, which by Filippov's theorem \cite[Thrm.~8.2.10]{AubinFrankowska}
implies that for any $n \in \mathbb{N}$ there exists a measurable function $\alpha_n \colon \Omega \to S^{\ell}$ such
that $z_n(x) = g(x, u_n(x), \nabla u_n(x), \alpha_n(x))$ for a.e. $x \in \Omega$. Define 
$$
  \widehat{z}_n(\cdot) = (\widehat{a}_n(\cdot), \widehat{v}_{1n}(\cdot), \widehat{v}_{2n}(\cdot)) =
  g(\cdot, u(\cdot), \nabla u(\cdot), \alpha_n(\cdot)).
$$
Clearly, $\widehat{z}_n$ is a measurable selection of the multifunction 
$\underline{d}_{u, \xi} f(\cdot, u(\cdot), \nabla u(\cdot))$. Denote by $(\widehat{A}_n, \widehat{x}_n^*)$ the element
of $\underline{d} \mathcal{I}(u)$ corresponding to this selection (see~\eqref{eq:HypodiffIntegralFunc}).

From the definition of $g$ (see~\eqref{eq:HypodiffViaConvexHull}) and the definition of $\widehat{z}_n$ it follows that 
$x_n^* = \widehat{x}_n^*$ for all $n \in \mathbb{N}$. Furthermore, for all $n \in \mathbb{N}$ and 
a.e. $x \in \Omega$ one has
$$
  |a_n(x) - \widehat{a}_n(x)| 
  \le \sum_{i = 1}^{\ell} \alpha_n^{(i)}(x) \big| f_i(x, u_n(x), \nabla u_n(x)) - f_i(x, u(x), \nabla u(x)) \big|.
$$
Hence $|a_n(x) - \widehat{a}_n(x)| \to 0$ as $n \to \infty$ for a.e. $x \in \Omega$, since by our assumptions 
$u_n \to u$ and $\nabla u_n \to \nabla u$ almost everywhere, and $f_i$ are Carath\'{e}odory functions.

By the growth condition on $D_{u, \xi} f(\cdot)$ (see~Def.~\ref{def:CodiffCond}) there exist $C > 0$ and an a.e.
nonnegative function $\beta \in \xL^1(\Omega)$ such that
$$
  |a_n(x) - \widehat{a}_n(x)| \le \beta(x) 
  + C\big( |u(x)| + |\nabla u(x)| + |u_n(x)| + |\nabla u_n(x)| \big)
$$
for a.e. $x \in \Omega$. With the use of this inequality and Vitali's convergence theorem one can check that 
$|a_n - \widehat{a}_n|$ converges to zero in $\xL^1(\Omega)$ as in the case $1 < p < + \infty$.
Hence $|A_n - \widehat{A}_n| \le \int_{\Omega} |a_n - \widehat{a}_n| d \mu$ converges to zero as $n \to \infty$,
i.e. $|A_n - \widehat{A}_n| + \| x_n^* - \widehat{x}_n^* \| \to 0$ as $n \to \infty$. Therefore, 
$\dist((A_n, x_n^*), \underline{d} \mathcal{I}(u)) \to 0$ as $n \to \infty$, which once again contradicts the
inequality $\dist((A_n, x_n^*), \underline{d} \mathcal{I}(u)) \ge \theta$.

\textbf{Case II.} Suppose now that \eqref{eq:HausdDiscont_2nd} holds true. Then for any $n \in \mathbb{N}$ there exists
$(A_n, x_n^*) \in \underline{d} \mathcal{I}(u)$ such that 
$\dist((A_n, x_n^*), \underline{d} \mathcal{I}(u_n)) \ge \theta$. For any $n \in \mathbb{N}$ denote by 
$z_n(\cdot) = (a_n(\cdot), v_{1n}(\cdot), v_{2n}(\cdot))$ a measurable selection of the set-valued mapping 
$\underline{d}_{u, \xi} f(\cdot, u(\cdot), \nabla u(\cdot))$ corresponding to $(A_n, x_n^*)$
(see~\eqref{eq:HypodiffIntegralFunc}).

\textbf{Case II, $1 < p \le + \infty$.} Denote by $z^0_n(\cdot) = (a_n^0(\cdot), v_{1n}^0(\cdot), v_{2n}^0(\cdot))$ any
measurable selection of the multifunction $\underline{d}_{u, \xi} f(\cdot, u_n(\cdot), \nabla u_n(\cdot))$ such that
$$
  \dist\big(z_n(x), \underline{d}_{u, \xi} f(x, u_n(x), \nabla u_n(x)) \big)^2
  = |a_n(x) - a_n^0(x)|^2 + |v_{1n}(x) - v_{1n}^0(x)|^2 + |v_{2n}(x) - v_{2n}^0(x)|^2
$$
for a.e. $x \in \Omega$. The existence of such selection can be proved in the same way it is done in \textbf{Case I}.
Finally, define mapping 
$\widehat{z}_n(\cdot) = (\widehat{a}_n(\cdot), \widehat{v}_{1n}(\cdot), \widehat{v}_{2n}(\cdot))$ as follows:
$$
  \widehat{z}_n(x) =
  \begin{cases}
    z^0_n(x), & \text{if } z_n(x) \notin \underline{d}_{u, \xi} f(x, u_n(x), \nabla u_n(x)), \\
    z_n(x), & \text{othwerwise}.
  \end{cases}
$$
Then $\widehat{z}_n$ is a measurable selection of the set-valued mapping 
$\underline{d}_{u, \xi} f(\cdot, u_n(\cdot), \nabla u_n(\cdot))$. 

By the codifferentiability conditions (see~Def.~\ref{def:CodiffCond}) the multifunction 
$\underline{d}_{u, \xi} f(\cdot)$ is a Carath\'{e}odory map, i.e. for a.e. $x \in \Omega$ the set-valued map 
$(u, \xi) \mapsto \underline{d}_{u, \xi} f(x, u, \xi)$ is continuous. Therefore, for a.e. $x \in \Omega$ one has 
$$
  \lim_{n \to \infty} 
  d_H(\underline{d}_{u, \xi} f(x, u_n(x), \nabla u_n(x)), \underline{d}_{u, \xi} f(x, u(x), \nabla u(x))) = 0.
$$ 
Hence, in particular, $\dist(z_n(x), \underline{d}_{u, \xi} f(x, u_n(x), \nabla u_n(x)) \to 0$ as $n \to \infty$, which
implies that the sequence $\{ z_n - \widehat{z}_n \}$ converges to zero almost everywhere.
Applying the growth condition on the codifferential mapping $D_{u, \xi} f$ and arguing in the same way as in
\textbf{Case I} one can check that this sequence converges to zero in
$\xL^1(\Omega) \times \xL^{p'}(\Omega; \xR^m) \times \xL^{p'}(\Omega; \xR^{m \times d})$. With the use of this
fact it is easy to show that $|A_n - \widehat{A}_n| + \| x_n^* - \widehat{x}_n^* \| \to 0$ as $n \to \infty$, where 
$(\widehat{A}_n, \widehat{x}_n^*)$ is the element of $\underline{d} \mathcal{I}(u_n)$ corresponding to the selection
$\widehat{z}_n$. Therefore, $\dist((A_n, x_n), \underline{d} \mathcal{I}(u_n)) \to 0$ as $n \to \infty$, which
contradicts the inequality $\dist((A_n, x_n^*), \underline{d} \mathcal{I}(u_n)) \ge \theta$.

\textbf{Case II, $p = 1$.} Arguing in the same way as in \textbf{Case I} and applying Filippov's theorem, for any 
$n \in \mathbb{N}$ one can find a measurable function $\alpha_n \colon \Omega \to S^{\ell}$ such that 
$z_n(x) = g(x, u(x), \nabla u(x), \alpha_n(x))$ for a.e. $x \in \Omega$. Define 
$$
  \widehat{z}_n(x) =(\widehat{a}_n(x), \widehat{v}_{1n}(x), \widehat{v}_{2n}(x)) 
  = g(x, u_n(x), \nabla u_n(x), \alpha_n(x))
$$
for a.e. $x \in \Omega$. Then $\widehat{z}_n(\cdot)$ is a measurable selection of the
set-valued mapping $\underline{d}_{u, \xi} f(\cdot, u_n(\cdot), \nabla u_n(\cdot))$. Denote by 
$(\widehat{A}_n, \widehat{x}_n^*)$ the element of $\underline{d} \mathcal{I} (u_n)$ corresponding to this selection.
Then $x_n^* = \widehat{x}_n^*$ for all $n \in \mathbb{N}$, and arguing in the same way as in \textbf{Case I} one can
check that $|A_n - \widehat{A}_n| \to 0$ as $n \to \infty$. Therefore 
$\dist((A_n, x_n), \underline{d} \mathcal{I}(u_n)) \to 0$ as $n \to \infty$, which once again contradicts the
inequality $\dist((A_n, x_n^*), \underline{d} \mathcal{I}(u_n)) \ge \theta$.
\end{proof}

Applying Theorem~\ref{thm:CodiffIntegralFunctional}, \cite[Crlr.~2]{Dolgopolik_CodiffDescent}, and
Lemma~\ref{lem:ContCodiffQuasidiffUniform} one obtains that in the case when the integrand $f$ satisfies the
codifferentiability conditions, the functional $\mathcal{I}$ is locally Lipschitz continuous and Hadamard
quasidifferentiable.

\begin{corollary} \label{crlr:QuasidiffIntegralFunctional}
Let $f$ satisfy the codifferentiability conditions of order $p \in [1, + \infty]$, and let either $1 < p \le + \infty$
or the set $\Omega$ be bounded and have the segment property, and the set-valued maps $\underline{d}_{u, \xi} f(\cdot)$
and $\overline{d}_{u, \xi} f(\cdot)$ have the form \eqref{eq:IntegrandCodiffL1}. Then the functional $\mathcal{I}$ is
locally Lipschitz continuous, Hadamard quasidifferentiable at every $u \in \xWsp^{1, p}(\Omega; \xR^m)$, and the
pair $\mathscr{D} \mathcal{I}(u) = [\underline{\partial} \mathcal{I}(u), \overline{\partial} \mathcal{I}(u)]$ with 
$$
  \underline{\partial} \mathcal{I}(u) 
  = \{ x^* \in (\xWsp^{1, p}(\Omega; \xR^m))^* \mid (0, x^*) \in \underline{d} \mathcal{I}(u) \}, \quad
  \overline{\partial} \mathcal{I}(u) 
  = \{ y^* \in (\xWsp^{1, p}(\Omega; \xR^m))^* \mid (0, y^*) \in \overline{d} \mathcal{I}(u) \},
$$
is a quasidifferential of $\mathcal{I}$ at $u$, where the sets $\underline{d} \mathcal{I}(u)$ and $\overline{d}
\mathcal{I}(u)$ are defined in Theorem~\ref{thm:CodiffIntegralFunctional}.
\end{corollary}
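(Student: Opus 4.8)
The plan is to derive the corollary directly from the three results established earlier, since all the substantive work has already been carried out in Theorem~\ref{thm:CodiffIntegralFunctional}. First I would observe that the hypotheses of the corollary are precisely those under which Theorem~\ref{thm:CodiffIntegralFunctional} guarantees that $\mathcal{I}$ is \emph{continuously} codifferentiable on $\xWsp^{1, p}(\Omega; \xR^m)$ with codifferential $D \mathcal{I}(u) = [\underline{d} \mathcal{I}(u), \overline{d} \mathcal{I}(u)]$ given by the formulas in that theorem. Indeed, in the case $1 < p \le +\infty$ the theorem yields both codifferentiability and Hausdorff continuity with no further assumptions, while in the case $p = 1$ the boundedness of $\Omega$ together with the segment property secures codifferentiability and the structural assumption \eqref{eq:IntegrandCodiffL1} secures the Hausdorff continuity of $\underline{d} \mathcal{I}(\cdot)$ and $\overline{d} \mathcal{I}(\cdot)$.

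Once continuous codifferentiability is in hand, local Lipschitz continuity is immediate from Corollary~\ref{crl:CodiffFuncLocLip}, applied at an arbitrary point $u \in \xWsp^{1, p}(\Omega; \xR^m)$: that corollary states that any continuously codifferentiable function is Lipschitz continuous in a neighbourhood of the point in question, and since $u$ was arbitrary this gives local Lipschitz continuity throughout the space.

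For the quasidifferentiability statement I would invoke Lemma~\ref{lem:ContCodiffQuasidiffUniform}. Fixing any $u$, that lemma asserts that a continuously codifferentiable function is Hadamard quasidifferentiable at $u$ and that, for \emph{any} codifferential $D \mathcal{I}(u)$, the pair obtained through \eqref{eq:QuasidiffOfCodiffFunc} is a quasidifferential. Substituting the explicit codifferential $[\underline{d} \mathcal{I}(u), \overline{d} \mathcal{I}(u)]$ supplied by Theorem~\ref{thm:CodiffIntegralFunctional} into \eqref{eq:QuasidiffOfCodiffFunc} produces exactly the pair $[\underline{\partial} \mathcal{I}(u), \overline{\partial} \mathcal{I}(u)]$ displayed in the statement, which completes the argument.

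In truth I expect no genuine obstacle here: the corollary is a packaging of earlier results rather than a new theorem. The only point requiring care is bookkeeping, namely confirming that the hypotheses of the corollary exactly match those needed for the Hausdorff-continuous version of Theorem~\ref{thm:CodiffIntegralFunctional} to hold at every $u$, and that continuous codifferentiability---being a pointwise notion---does indeed hold at each point of $\xWsp^{1, p}(\Omega; \xR^m)$, so that Corollary~\ref{crl:CodiffFuncLocLip} and Lemma~\ref{lem:ContCodiffQuasidiffUniform} may legitimately be applied there.
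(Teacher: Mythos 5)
Your proposal is correct and follows exactly the route the paper takes: the corollary is stated immediately after the remark that combining Theorem~\ref{thm:CodiffIntegralFunctional}, Corollary~\ref{crl:CodiffFuncLocLip}, and Lemma~\ref{lem:ContCodiffQuasidiffUniform} yields local Lipschitz continuity and Hadamard quasidifferentiability, with the quasidifferential obtained by substituting the explicit codifferential into \eqref{eq:QuasidiffOfCodiffFunc}. Your bookkeeping of which hypotheses are needed in the $p=1$ case versus $1<p\le+\infty$ matches the paper's as well.
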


\begin{remark} \label{rmrk:QuasidiffIntegralFunctional}
Recall that by the definition of codifferential one has $a \le 0$ for any 
$(a, v_1, v_2) \in \underline{d}_{u, \xi} f(x, u, \xi)$. Hence with the use of
Theorem~\ref{thm:CodiffIntegralFunctional} and the corollary above one obtains that 
$x^* \in \underline{\partial} \mathcal{I}(u)$ if and only if there exists a measurable selection
$(0, v_1(\cdot), v_2(\cdot))$ of the multifunction $\underline{d}_{u, \xi} f(\cdot, u(\cdot), \nabla u(\cdot))$
such that
$$
  \langle x^*, h \rangle = 
  \int_{\Omega} \big( \langle v_1(x), h(x) \rangle + \langle v_2(x), \nabla h(x) \rangle \big) \, dx 
  \quad \forall h \in \xWsp^{1, p}(\Omega; \xR^m).
$$
A similar statement holds true for $\overline{\partial} \mathcal{I}(u)$ as well.
\end{remark}

As usual, denote by $\xWsp^{1, p}_0(\Omega; \xR^m)$ the closure of the space $\xC^{\infty}_c(\Omega; \xR^m)$ of infinitely
differentiable functions $\varphi \colon \Omega \to \xR^m$ with compact support in the Sobolev space 
$\xWsp^{1, p}(\Omega; \xR^m)$. To derive optimality conditions for problems with prescribed boundary conditions we
will utilise the following corollary on the quasidifferentiability of the restriction of $\mathcal{I}$ to the space
$\xWsp^{1, p}_0(\Omega; \xR^m)$. This result is almost trivial. Nevertheless, we briefly outline its proof for the
sake of completeness and mathematical rigour.

\begin{corollary} \label{crlr:Quasidiff_ZeroTraceSobolev}
Let $f$ satisfy the codifferentiability conditions of order $p \in [1, + \infty]$, 
$u_0 \in \xWsp^{1, p}(\Omega; \xR^m)$ be fixed, and let either $1 < p \le + \infty$ or the set $\Omega$ be bounded
and have the segment property, and the set-valued maps $\underline{d}_{u, \xi} f(\cdot)$
and $\overline{d}_{u, \xi} f(\cdot)$ have the form \eqref{eq:IntegrandCodiffL1}. Then the functional
$\mathcal{J} \colon \xWsp^{1, p}_0(\Omega; \xR^m) \to \xR$, $\mathcal{J}(u) = \mathcal{I}(u_0 + u)$ is
correctly defined, locally Lipschitz continuous, and Hadamard quasidifferentiable at every 
$u \in \xWsp^{1, p}_0(\Omega; \xR^m)$. Furthermore, the pair 
$\mathscr{D} \mathcal{J}(u) = [\underline{\partial} \mathcal{J}(u), \overline{\partial} \mathcal{J}(u)]$ with
\begin{multline} \label{eq:Quasidiff_ZeroTraceSobolev1}
  \underline{\partial} \mathcal{J}(u) = \Big\{ x^* \in (\xWsp^{1, p}_0(\Omega; \xR^m))^* \Bigm|
  \langle x^*, h \rangle = 
  \int_{\Omega} \big( \langle v_1(x), h(x) \rangle + \langle v_2(x), \nabla h(x) \rangle \big) \, dx \enspace
  \forall h \in \xWsp^{1, p}_0(\Omega; \xR^m), \\
  (0, v_1(\cdot), v_2(\cdot)) \text{ is a measurable selection of the set-valued map }
  \underline{d}_{u, \xi} f(\cdot, u_0(\cdot) + u(\cdot), \nabla u_0(\cdot) + \nabla u(\cdot)) \Big\}
\end{multline}
and
\begin{multline} \label{eq:Quasidiff_ZeroTraceSobolev2}
  \overline{\partial} \mathcal{J}(u) = \Big\{ y^* \in (\xWsp^{1, p}_0(\Omega; \xR^m))^* \Bigm|
  \langle y^*, h \rangle = 
  \int_{\Omega} \big( \langle w_1(x), h(x) \rangle + \langle w_2(x), \nabla h(x) \rangle \big) \, dx \enspace
  \forall h \in \xWsp^{1, p}_0(\Omega; \xR^m), \\
  (0, w_1(\cdot), w_2(\cdot)) \text{ is a measurable selection of the set-valued map }
  \overline{d}_{u, \xi} f(\cdot, u_0(\cdot) + u(\cdot), \nabla u_0(\cdot) + \nabla u(\cdot)) \Big\}
\end{multline}
is a quasidifferential of $\mathcal{J}$ at $u$.
\end{corollary}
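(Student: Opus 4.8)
The plan is to recognise $\mathcal{J}$ as the composition of the continuously codifferentiable functional $\mathcal{I}$ with a continuously Fr\'{e}chet differentiable (in fact, affine) map, and then to invoke the chain rule for codifferentials (part~5 of Proposition~\ref{prp:CodiffCalc}) together with the passage from codifferentiability to Hadamard quasidifferentiability established in Lemma~\ref{lem:ContCodiffQuasidiffUniform}.

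First I would introduce the affine map $F \colon \xWsp^{1, p}_0(\Omega; \xR^m) \to \xWsp^{1, p}(\Omega; \xR^m)$ defined by $F(u) = u_0 + u$. Since $u_0 \in \xWsp^{1, p}(\Omega; \xR^m)$ and $\xWsp^{1, p}_0(\Omega; \xR^m) \subseteq \xWsp^{1, p}(\Omega; \xR^m)$, the map $F$ is correctly defined, hence so is $\mathcal{J} = \mathcal{I} \circ F$. The map $F$ is continuously Fr\'{e}chet differentiable with constant derivative $F'(u) = \iota$ equal to the canonical embedding $\iota \colon \xWsp^{1, p}_0(\Omega; \xR^m) \hookrightarrow \xWsp^{1, p}(\Omega; \xR^m)$ at every point. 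Under the hypotheses of the corollary, Theorem~\ref{thm:CodiffIntegralFunctional} guarantees that $\mathcal{I}$ is continuously codifferentiable at $F(u) = u_0 + u$ with the explicit codifferential $D \mathcal{I}(u_0 + u) = [\underline{d} \mathcal{I}(u_0 + u), \overline{d} \mathcal{I}(u_0 + u)]$ given there.

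Next I would apply part~5 of Proposition~\ref{prp:CodiffCalc} to conclude that $\mathcal{J} = \mathcal{I} \circ F$ is continuously codifferentiable at every $u \in \xWsp^{1, p}_0(\Omega; \xR^m)$ and that one may take $\underline{d} \mathcal{J}(u) = \{ (A, x^* \circ \iota) \mid (A, x^*) \in \underline{d} \mathcal{I}(u_0 + u) \}$ and $\overline{d} \mathcal{J}(u) = \{ (B, y^* \circ \iota) \mid (B, y^*) \in \overline{d} \mathcal{I}(u_0 + u) \}$, where $x^* \circ \iota$ is simply the restriction of the functional $x^*$ to the subspace $\xWsp^{1, p}_0(\Omega; \xR^m)$. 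Local Lipschitz continuity of $\mathcal{J}$ is then immediate from Corollary~\ref{crl:CodiffFuncLocLip}, while Hadamard quasidifferentiability together with the representation $\underline{\partial} \mathcal{J}(u) = \{ x^* \mid (0, x^*) \in \underline{d} \mathcal{J}(u) \}$ and $\overline{\partial} \mathcal{J}(u) = \{ y^* \mid (0, y^*) \in \overline{d} \mathcal{J}(u) \}$ follows from Lemma~\ref{lem:ContCodiffQuasidiffUniform}.

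The last step, and the only point requiring a little care, is to translate these restriction formulas into the explicit selection descriptions \eqref{eq:Quasidiff_ZeroTraceSobolev1} and \eqref{eq:Quasidiff_ZeroTraceSobolev2}. By Remark~\ref{rmrk:QuasidiffIntegralFunctional}, a functional $x^*$ lies in $\underline{\partial} \mathcal{I}(u_0 + u)$ exactly when it is generated by a measurable selection $(a(\cdot), v_1(\cdot), v_2(\cdot))$ of $\underline{d}_{u, \xi} f(\cdot, u_0(\cdot) + u(\cdot), \nabla u_0(\cdot) + \nabla u(\cdot))$ with $a(x) = 0$ for a.e.\ $x \in \Omega$; restricting the associated integral functional from test functions $h \in \xWsp^{1, p}(\Omega; \xR^m)$ to $h \in \xWsp^{1, p}_0(\Omega; \xR^m)$ produces precisely the right-hand side of \eqref{eq:Quasidiff_ZeroTraceSobolev1}, and feeding the same selection back in yields the reverse inclusion. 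The formula \eqref{eq:Quasidiff_ZeroTraceSobolev2} is obtained identically. The nearest thing to an obstacle is verifying that the dual restriction map $x^* \mapsto x^* \circ \iota$ and the Filippov-type selection argument of Remark~\ref{rmrk:QuasidiffIntegralFunctional} commute cleanly, that is, that every selection-generated functional on $\xWsp^{1, p}(\Omega; \xR^m)$ restricts to one of the asserted form on $\xWsp^{1, p}_0(\Omega; \xR^m)$ and, conversely, that every functional of the asserted form lifts to an admissible selection; this is routine precisely because the integral representation in both cases is driven by the \emph{same} measurable selection, so no lifting beyond mere restriction of the test function is ever needed.
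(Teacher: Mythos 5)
Your proof is correct, and it reaches the stated formulas by a route that differs in mechanism from the paper's, though both ultimately amount to ``restrict everything to the subspace $\xWsp^{1,p}_0(\Omega;\xR^m)$.'' You work at the codifferential level: you write $\mathcal{J} = \mathcal{I}\circ F$ with $F(u)=u_0+u$, invoke the chain rule of part~5 of Proposition~\ref{prp:CodiffCalc} (legitimate here, since under the corollary's hypotheses Theorem~\ref{thm:CodiffIntegralFunctional} gives \emph{continuous} codifferentiability of $\mathcal{I}$), and then pass to the quasidifferential via Lemma~\ref{lem:ContCodiffQuasidiffUniform} and Corollary~\ref{crl:CodiffFuncLocLip}; the identification of $\underline{\partial}\mathcal{J}(u)$ with the selection description is then exactly the observation of Remark~\ref{rmrk:QuasidiffIntegralFunctional}, using that $a\le 0$ a.e.\ forces $\int_\Omega a = 0$ to mean $a=0$ a.e. The paper instead stays at the quasidifferential level: it introduces the weak${}^*$-to-weak${}^*$ continuous restriction operator $\mathcal{T}\colon X^*\to X_0^*$, notes $\underline{\partial}\mathcal{J}(u)=\mathcal{T}(\underline{\partial}\mathcal{I}(u_0+u))$ (whence convexity and weak${}^*$ compactness), and verifies the Hadamard directional derivative formula for $\mathcal{J}$ directly from Corollary~\ref{crlr:QuasidiffIntegralFunctional}. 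Your route buys economy by reusing the calculus already established; the paper's route is slightly more robust, since it degrades gracefully to the situation of Remark~\ref{rmrk:Quasidiff_ZeroTraceSobolev} (case $p=1$ without assumption \eqref{eq:IntegrandCodiffL1}, where $\mathcal{I}$ is merely quasidifferentiable and the chain rule of Proposition~\ref{prp:CodiffCalc} is not applicable because continuous codifferentiability fails). For the statement as actually formulated, both arguments are complete.
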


\begin{proof}
The fact that the functional $\mathcal{J}$ is correctly defined and locally Lipschitz continuous follows directly from
its definition and Corollary~\ref{crlr:QuasidiffIntegralFunctional}. Let us prove that it is Hadamard
quasidifferentiable.

Denote $X = \xWsp^{1, p}(\Omega; \xR^m)$ and $X_0 = \xWsp^{1, p}_0(\Omega; \xR^m)$. Introduce the linear operator
$\mathcal{T} \colon X^* \to X_0^*$ that maps $x^* \in X^*$ to its restriction to $X_0$, i.e. 
$\mathcal{T}(x^*) = x^*|_{X_0}$. It is easily seen that $\mathcal{T}$ is a continuous operator from $X^*$ endowed with
the weak${}^*$ topology to $X_0^*$ endowed with the weak${}^*$ topology, since $X_0 \subset X$.

Observe that by definitions (see Corollary~\ref{crlr:QuasidiffIntegralFunctional} and
Remark~\ref{rmrk:QuasidiffIntegralFunctional}) one has 
$\underline{\partial} \mathcal{J}(u) = \mathcal{T}(\underline{\partial} \mathcal{I}(u_0 + u))$ and
$\overline{\partial} \mathcal{J}(u) = \mathcal{T}(\overline{\partial} \mathcal{I}(u_0 + u))$. Therefore, 
$\underline{\partial} \mathcal{J}(u)$ and $\overline{\partial} \mathcal{J}(u)$ are convex and weak${}^*$ compact convex
subsets of $X_0^*$ due to the fact that $\underline{\partial} \mathcal{I}(u_0 + u)$ and 
$\overline{\partial} \mathcal{I}(u_0 + u)$ are convex and weak${}^*$ compact convex subsets of $X^*$.

Fix any $u \in \xWsp^{1, p}_0(\Omega; \xR^m)$. With the use of Corollary~\ref{crlr:QuasidiffIntegralFunctional} and
the fact that for any $\alpha > 0$ and $h \in \xWsp^{1, p}_0(\Omega; \xR^m)$ one has 
$\mathcal{J}(u + \alpha h) = \mathcal{I}(u_0 + u + \alpha h)$ one obtains that
\begin{multline*}
  \lim_{[\alpha, h'] \to [+0, h]} \frac{\mathcal{J}(u + \alpha h') - \mathcal{J}(u)}{\alpha}
  = \lim_{[\alpha, h'] \to [+0, h]} \frac{\mathcal{I}(u_0 + u + \alpha h') - \mathcal{J}(u_0 + u)}{\alpha} \\
  = \mathcal{I}'(u_0 + u; h) 
  = \max_{x^* \in \underline{\partial} \mathcal{I}(u_0 + u)} \langle x^*, h \rangle 
  + \min_{y^* \in \overline{\partial} \mathcal{I}(u_0 + u)} \langle y^*, h \rangle 
  = \max_{x^* \in \underline{\partial} \mathcal{J}(u)} \langle x^*, h \rangle 
  + \min_{y^* \in \overline{\partial} \mathcal{J}(u)} \langle y^*, h \rangle 
\end{multline*}
for all $h \in \xWsp^{1, p}_0(\Omega; \xR^m)$ (here $h' \in \xWsp^{1, p}_0(\Omega; \xR^m)$ as well). Thus, the
functional $\mathcal{J}$ is Hadamard quasidifferentiable, and the pair \eqref{eq:Quasidiff_ZeroTraceSobolev1},
\eqref{eq:Quasidiff_ZeroTraceSobolev2} is its quasidifferential.
\end{proof}

\begin{remark} \label{rmrk:Quasidiff_ZeroTraceSobolev}
By Theorem~\ref{thm:CodiffIntegralFunctional}, the assumption that in the case $p = 1$ the set-valued mappings 
$\underline{d}_{u, \xi} f(\cdot)$ and $\overline{d}_{u, \xi} f(\cdot)$ have the form \eqref{eq:IntegrandCodiffL1} is
needed only to ensure the continuity of the multifunctions $\underline{d} \mathcal{I}(\cdot)$ and 
$\overline{d} \mathcal{I}(\cdot)$, i.e. to ensure that the functional $\mathcal{I}$ is \textit{continuously}
codifferentiable. Therefore, in the case when the function $f$ satisfies the codifferentiability conditions of
order $p = 1$ and $\Omega$ is bounded and has the segment property, but the set-valued mappings 
$\underline{d}_{u, \xi} f(\cdot)$ and $\overline{d}_{u, \xi} f(\cdot)$ do \textit{not} have the form
\eqref{eq:IntegrandCodiffL1}, the functional $\mathcal{J}$ from Corollary~\ref{crlr:Quasidiff_ZeroTraceSobolev} is still
quasidifferentiable and the pair \eqref{eq:Quasidiff_ZeroTraceSobolev1}, \eqref{eq:Quasidiff_ZeroTraceSobolev2} is a
quasidifferential of $\mathcal{J}$ at $u$ by Remark~\ref{rmrk:Codifferential_Quasidifferential} and
Theorem~\ref{thm:CodiffIntegralFunctional}.
\end{remark}

\section{Constrained Nonsmooth Problems of the Calculus of Variations}
\label{sec:CalcVar}

In this section we derive optimality conditions in terms of codifferentials for nonsmooth problems of the calculus of
variations with nonsmooth isoperimetric constraints and nonsmooth constraints at the boundary of the domain. By means of
several simple examples we also demonstrate that in some cases optimality conditions in terms of codifferentials are
better than optimality conditions in terms of various subdifferentials.

\subsection{Unconstrained Problems}

We start with an unconstrained problem of the form
\begin{equation} \label{eq:UnconstrainedCalcVarProbl}
  \min \: \mathcal{I}(u) = \int_{\Omega} f(x, u(x), \nabla u(x)) \, dx, \quad
  u \in u_0 + \xWsp^{1, p}_0(\Omega; \xR^m).
\end{equation}
Here, as in the previous section, $\Omega \subseteq \xR^d$ is an open set, 
$f \colon \Omega \times \xR^m \times \xR^{m \times d} \to \xR$, $f = f(x, u, \xi)$, is a nonsmooth
function, while $u_0 \in \xWsp^{1, p}(\Omega; \xR^m)$ is a fixed function. 

In essence, problem \eqref{eq:UnconstrainedCalcVarProbl} can be viewed as the classical problem of minimising
$\mathcal{I}(u)$ over the set of all those $u \in \xWsp^{1, p}(\Omega; \xR^m)$ for which 
$u|_{\partial \Omega} = \psi$ for some prespecified function $\psi$, where $\partial \Omega$ is the boundary of
$\Omega$ (simply put $\psi = u_0|_{\partial \Omega}$). However, to avoid the usage of trace operators and corresponding
assumptions on the domain $\Omega$, we pose this classical ``boundary value problem'' in the abstract form
\eqref{eq:UnconstrainedCalcVarProbl}.

In the case when the domain $\Omega$ is bounded and has the segment property, optimality conditions for this problem in
terms of codifferentials were first obtained by the author in \cite{Dolgopolik_CalcVar}. Here we rederive this
conditions in the general case to help the reader more readily understand the derivation of optimality conditions for
constrained problems, as well as due to the fact the optimality conditions for problem
\eqref{eq:UnconstrainedCalcVarProbl} are closely related to a natural constraint qualification for isoperimetric
constraints.

Recall that a function $v \in \xL^1_{loc}(\Omega)$ is called a \text{weak divergence} of a vector field
$u \in \xL^1(\Omega; \xR^d)$, if
$$
  \int_{\Omega} v \varphi dx = - \int_{\Omega} \langle u, \nabla \varphi \rangle dx
  \quad \forall \varphi \in \xC^{\infty}_c(\Omega).
$$
In this case we write $v = \diverg u$. Denote by $\xL^{p}(\Omega; \xR^{m \times d}; \diverg)$ the space of all those
functions $u \in \xL^{p}(\Omega; \xR^{m \times d})$ for which 
there exists the weak divergence 
$\diverg u = (\diverg(u_{11}, \ldots, u_{1d}), \ldots, \diverg(u_{m1}, \ldots, u_{md}))$ and
$\diverg u \in \xL^{p}(\Omega; \xR^m)$. Note that in the one-dimensional case (i.e. when $d = 1$) the weak divergence
$\diverg u$ coincides with the weak derivative $u'$, which implies that the space 
$\xL^{p}(\Omega; \xR^{m \times 1}; \diverg)$ coincides with the Sobolev space $\xWsp^{1, p}(\Omega; \xR^m)$.

\begin{theorem} \label{thrm:UnconstrainedCalcVarProbl}
Let $f$ satisfy the codifferentiability conditions of order $p \in [1, + \infty]$, and let either $1 < p \le + \infty$
or the set $\Omega$ be bounded and have the segment property. Let also $u_*$ be a locally optimal solution of problem
\eqref{eq:UnconstrainedCalcVarProbl}. Then for any measurable selection $(0, w_1(\cdot), w_2(\cdot))$ of the
set-valued map $\overline{d}_{u, \xi} f(\cdot, u_*(\cdot), \nabla u_*(\cdot))$ there exists 
$\zeta \in \xL^{p'}(\Omega; \xR^{m \times d}; \diverg)$ satisfying the Euler-Lagrange inclusion
\begin{equation} \label{eq:UnconstrainedOptCond_CalcVar}
  (0, \diverg(\zeta)(x), \zeta(x)) \in \underline{d}_{u, \xi} f(x, u_*(x), \nabla u_*(x)) + (0, w_1(x), w_2(x))
  \quad \text{for a.e. } x \in \Omega.
\end{equation}
\end{theorem}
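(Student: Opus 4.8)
The plan is to reduce the problem to an unconstrained minimisation of the shifted functional $\mathcal{J}(v) = \mathcal{I}(u_0 + v)$ on $\xWsp^{1,p}_0(\Omega; \xR^m)$, apply the elementary first-order necessary condition for quasidifferentiable functions to obtain a set inclusion between sub- and superdifferentials, and then reinterpret that inclusion pointwise through the notion of weak divergence. First I would set $v_* = u_* - u_0 \in \xWsp^{1,p}_0(\Omega;\xR^m)$, so that local optimality of $u_*$ in \eqref{eq:UnconstrainedCalcVarProbl} is exactly local optimality of $v_*$ for $\mathcal{J}$. By Corollary~\ref{crlr:Quasidiff_ZeroTraceSobolev} (and, in the case $p=1$, Remark~\ref{rmrk:Quasidiff_ZeroTraceSobolev}, since only quasidifferentiability and not Hadamard quasidifferentiability is needed here), $\mathcal{J}$ is quasidifferentiable at $v_*$ with quasidifferential given by \eqref{eq:Quasidiff_ZeroTraceSobolev1}, \eqref{eq:Quasidiff_ZeroTraceSobolev2}, where $u_0 + v_* = u_*$. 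Because $v_*$ is a local minimiser, for every direction $h \in \xWsp^{1,p}_0(\Omega;\xR^m)$ the directional derivative is nonnegative, that is,
$$
  \max_{x^* \in \underline{\partial}\mathcal{J}(v_*)} \langle x^*, h\rangle
  + \min_{y^* \in \overline{\partial}\mathcal{J}(v_*)} \langle y^*, h\rangle \ge 0
  \quad \forall h \in \xWsp^{1,p}_0(\Omega;\xR^m).
$$

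Next I would convert this scalar inequality into the set inclusion $-\overline{\partial}\mathcal{J}(v_*) \subseteq \underline{\partial}\mathcal{J}(v_*)$. Fixing any $y^* \in \overline{\partial}\mathcal{J}(v_*)$ and using $\min_{y'}\langle y', h\rangle \le \langle y^*, h\rangle$ yields $\langle -y^*, h\rangle \le \max_{x^* \in \underline{\partial}\mathcal{J}(v_*)}\langle x^*, h\rangle$ for all $h$. Since $\underline{\partial}\mathcal{J}(v_*)$ is convex and weak${}^*$ compact, the weak${}^*$ Hahn--Banach separation theorem (a point lies in a weak${}^*$ closed convex set iff its pairing never exceeds the support function) gives $-y^* \in \underline{\partial}\mathcal{J}(v_*)$.

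Finally I would read off $\zeta$. Given a measurable selection $(b, w_1, w_2)$ of $\overline{d}_{u, \xi} f(\cdot, u_*(\cdot), \nabla u_*(\cdot))$ with $b = 0$ a.e., the functional $y^*$ defined by $\langle y^*, h\rangle = \int_{\Omega}(\langle w_1, h\rangle + \langle w_2, \nabla h\rangle)\,dx$ lies in $\overline{\partial}\mathcal{J}(v_*)$ by \eqref{eq:Quasidiff_ZeroTraceSobolev2}. The inclusion just established then furnishes a measurable selection $(0, \widetilde{v}_1, \widetilde{v}_2)$ of $\underline{d}_{u, \xi} f(\cdot, u_*(\cdot), \nabla u_*(\cdot))$ with $\langle -y^*, h\rangle = \int_{\Omega}(\langle \widetilde{v}_1, h\rangle + \langle \widetilde{v}_2, \nabla h\rangle)\,dx$. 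Adding the two representations gives $\int_{\Omega}(\langle \widetilde{v}_1 + w_1, h\rangle + \langle \widetilde{v}_2 + w_2, \nabla h\rangle)\,dx = 0$ for all $h \in \xWsp^{1,p}_0(\Omega;\xR^m)$, and in particular for all $\varphi \in \xC^{\infty}_c(\Omega;\xR^m)$. Setting $\zeta := \widetilde{v}_2 + w_2$, this identity is, row by row, precisely the statement that $\zeta$ possesses a weak divergence with $\diverg \zeta = \widetilde{v}_1 + w_1$; the growth conditions (Def.~\ref{def:CodiffCond}) together with $p'(p-1) = p$ guarantee $\widetilde{v}_1 + w_1, \widetilde{v}_2 + w_2 \in \xL^{p'}$, so $\zeta \in \xL^{p'}(\Omega;\xR^{m\times d};\diverg)$. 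Rewriting $\widetilde{v}_1 = \diverg\zeta - w_1$ and $\widetilde{v}_2 = \zeta - w_2$ turns the membership $(0, \widetilde{v}_1, \widetilde{v}_2) \in \underline{d}_{u, \xi} f(x, u_*(x), \nabla u_*(x))$ into exactly \eqref{eq:UnconstrainedOptCond_CalcVar}, since $b = 0$.

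I expect the main obstacle to be the passage from the abstract functional identity to the pointwise Euler--Lagrange inclusion: recognising that the vanishing of $\int_{\Omega}(\langle \widetilde{v}_1 + w_1, h\rangle + \langle \widetilde{v}_2 + w_2, \nabla h\rangle)$ against all admissible $h$ is exactly the defining weak-divergence relation for $\zeta = \widetilde{v}_2 + w_2$, and checking the integrability $\zeta, \diverg\zeta \in \xL^{p'}$ uniformly (via the growth conditions and the conjugacy $p'(p-1)=p$, with the boundedness argument in the case $p=+\infty$). The translation between points of $\underline{\partial}\mathcal{J}(v_*)$, $\overline{\partial}\mathcal{J}(v_*)$ and measurable selections of the integrand's codifferential, and the weak${}^*$ separation step establishing the inclusion, while standard, must be carried out with the correct compactness in the infinite-dimensional dual $(\xWsp^{1,p}_0(\Omega;\xR^m))^*$.
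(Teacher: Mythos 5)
Your proposal is correct and follows essentially the same route as the paper: reduce to the shifted functional on $\xWsp^{1,p}_0(\Omega;\xR^m)$, use Corollary~\ref{crlr:Quasidiff_ZeroTraceSobolev} (with Remark~\ref{rmrk:Quasidiff_ZeroTraceSobolev} for $p=1$), derive $\mathcal{J}'\ge 0$, apply weak${}^*$ separation to get $-y^*\in\underline{\partial}\mathcal{J}$, and read off $\zeta$ via the weak-divergence identity and the growth conditions. The only cosmetic difference is that you state the conclusion as the full inclusion $-\overline{\partial}\mathcal{J}(v_*)\subseteq\underline{\partial}\mathcal{J}(v_*)$, whereas the paper verifies $0\in\underline{\partial}\mathcal{J}(0)+y_0^*$ only for the particular $y_0^*$ built from the chosen selection; these are the same argument.
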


\begin{proof}
Define $\mathcal{J}(h) = \mathcal{I}(u_* + h)$ for any $h \in \xWsp^{1, p}_0(\Omega; \xR^m)$. By
Corollary~\ref{crlr:Quasidiff_ZeroTraceSobolev} and Remark~\ref{rmrk:Quasidiff_ZeroTraceSobolev} the functional
$\mathcal{J}$ is quasidifferentiable at $u = 0$, i.e. its directional derivative at this point has the form
\begin{equation} \label{eq:UncontrMinVar_DirectDeriv}
  \mathcal{J}'(0, h) = \max_{x^* \in \underline{\partial} \mathcal{J}(0)} \langle x^*, h \rangle
  + \min_{y^* \in \overline{\partial} \mathcal{J}(0)} \langle y^*, h \rangle,
\end{equation}
where the pair $[\underline{\partial} \mathcal{J}(0), \overline{\partial} \mathcal{J}(0)]$ is from
Corollary~\ref{crlr:Quasidiff_ZeroTraceSobolev}.

Fix any measurable selection $(0, w_1(\cdot), w_2(\cdot))$ of the set-valued mapping 
$x \mapsto \overline{d}_{u, \xi} f(x, u_*(x), \nabla u_*(x))$ and define a linear functional $y_0^*$ as follows:
$$
  \langle y_0^*, h \rangle = 
  \int_{\Omega} \big( \langle w_1(x), h(x) \rangle + \langle w_2(x), \nabla h(x) \rangle \big) \, dx
  \quad \forall h \in \xWsp^{1, p}_0(\Omega; \xR^m).
$$
Observe that $y_0^* \in \overline{\partial} \mathcal{J}(0)$ by Corollary~\ref{crlr:Quasidiff_ZeroTraceSobolev}.

Recall that $u_*$ is a locally optimal solution of problem \eqref{eq:UnconstrainedCalcVarProbl}. Therefore, $h = 0$ is a
point of local minimum of the functional $\mathcal{J}$, which obviously implies that $\mathcal{J}'(0, h) \ge 0$ for
all $h \in \xWsp^{1, p}_0(\Omega; \xR^m)$. Hence by applying \eqref{eq:UncontrMinVar_DirectDeriv} one obtains that
$$
  \max_{x^* \in \underline{\partial} \mathcal{J}(0)} \langle x^*, h \rangle + \langle y_0^*, h \rangle \ge 0 
  \quad \forall h \in \xWsp^{1, p}_0(\Omega; \xR^m).
$$
Consequently, $0 \in \underline{\partial} \mathcal{J}(0) + y_0^*$, since otherwise utilising the separation theorem in
the space $(\xWsp^{1, p}_0(\Omega; \xR^m))^*$ equipped with the weak${}^*$ topology one can find 
$h \in \xWsp^{1, p}_0(\Omega; \xR^m)$ such that
$\max\{ \langle x^*, h \rangle \mid x^* \in \underline{\partial} \mathcal{J}(0) + y_0^* \} < 0$,
which is impossible. Thus, there exists $x_0^* \in \underline{\partial} \mathcal{J}(0)$ such that
$x_0^* + y_0^* = 0$. Hence by Corollary~\ref{crlr:Quasidiff_ZeroTraceSobolev} there exists a measurable selection
$(0, v_1(\cdot), v_2(\cdot))$ of the multifunction 
$\underline{d}_{u, \xi} f(\cdot, u_*(\cdot), \nabla u_*(\cdot))$ such that
$$
  \int_{\Omega} \big( \langle v_1(x) + w_1(x), h(x) \rangle 
  + \langle v_2(x) + w_2(x), \nabla h(x) \rangle  \big) \, dx = 0
  \quad \forall h \in \xWsp^{1, p}_0(\Omega; \xR^m).
$$
Define $\zeta = v_2 + w_2$. Then the equality above implies that there exists the weak divergence of $\zeta$ and
$\diverg \zeta = v_1 + w_1$. From the growth condition on the codifferential mapping $D_{u, \xi} f(\cdot)$ 
(see~the definition of codifferentiability conditions, Def.~\ref{def:CodiffCond}) it obviously follows that
$v_2 + w_2 \in \xL^{p'}(\Omega; \xR^{m \times d})$ and $v_1 + w_1 \in \xL^{p'}(\Omega; \xR^m)$. Thus,
$\zeta \in \xL^{p'}(\Omega; \xR^{m \times d}; \diverg)$, and \eqref{eq:UnconstrainedOptCond_CalcVar} holds true by
the definition of $\zeta$.
\end{proof}

Let us give an example illustrating optimality conditions from the theorem above.

\begin{example} \label{example:UnconstrainedProblem}
Let $d = 2$, $m = 1$, $p = 1$, and $\Omega = (-1, 1) \times (-1, 1)$. Consider the following problem:
\begin{equation} \label{eq:UnconstrainedCalcVar_Example}
  \min_{u \in \xWsp^{1, 1}(\Omega)} \: \mathcal{I}(u) = \int_{\Omega}  
  \big( |u'_{x^{(1)}}(x)| - |u'_{x^{(2)}}(x)| \big) \, dx, 
  \quad u|_{\partial \Omega} = 0.
\end{equation}
In this case $f(x, u, \xi) = |\xi^{(1)}| - |\xi^{(2)}|$, and we define $u_0 = 0$ (see
problem~\eqref{eq:UnconstrainedCalcVarProbl}). We want to know whether the function $u_* = 0$ is an optimal solution of
problem \eqref{eq:UnconstrainedCalcVar_Example}.

Let us apply optimality conditions in term of the Clarke subdifferential first \cite[Thrm.~4.6.1]{Clarke} 
(see also \cite[Sect.~20]{Clarke2013}). Denote $L(u, \xi) = |\xi^{(1)}| - |\xi^{(2)}|$. As is easily seen, the Clarke
subdifferential of this function at the origin has the form:
$$
  \partial_{Cl} L(0, 0) = \co\left\{ \left( \begin{smallmatrix} 0 \\ 1 \\ 1 \end{smallmatrix} \right), 
  \left( \begin{smallmatrix} 0 \\ 1 \\ -1 \end{smallmatrix} \right), 
  \left( \begin{smallmatrix} 0 \\ -1 \\ 1 \end{smallmatrix} \right), 
  \left( \begin{smallmatrix} 0 \\ -1 \\ -1 \end{smallmatrix} \right) \right\}.
$$
Therefore, for the function $\zeta = 0$ one has 
$(\diverg \zeta(x), \zeta(x)) \in \partial_{Cl} L(u_*(x), \nabla u_*(x))$ for all $x \in \Omega$, i.e. optimality
conditions in terms of the Clarke subdifferential \cite[Thrm.~4.6.1]{Clarke} are satisfied at $u_* = 0$. One can
verify that optimality conditions in terms of $K$-subdifferential from \cite{OrlovTsygankova} are satisfied for 
$u_* = 0$ as well.

Let us now check optimality conditions from Theorem~\ref{thrm:UnconstrainedCalcVarProbl}. With the use of
the codifferential calculus \cite{DemRub_book} one gets
$$
  \underline{d}_{u, \xi} f(x, u, \xi) = \co\big\{ (\pm\xi^{(1)} - |\xi^{(1)}|, 0, \pm 1, 0) \big\}, \quad
  \overline{d}_{u, \xi} f(x, u, \xi) = \co\big\{ (\pm\xi^{(2)} + |\xi^{(2)}|, 0, 0, \pm 1) \big\}
$$
(here the first coordinate is $a$, the second is $v_1$, while the third and fourth ones are $v_2$ in the notation of
the previous section). Therefore, as is readily seen, the integrand $f$ satisfies the codifferentiability conditions of
order $p = 1$.

For any $n \in \mathbb{N}$ define
\begin{equation} \label{eq:AlternStepFunc}
  \rho_n(t) = \begin{cases}
    1, & \text{if } t \in \left[-1 + \frac{2k - 2}{2n}, -1 + \frac{2k - 1}{2n} \right), k \in \{ 1, \ldots, 2n \} \\
    -1, & \text{if } t \in \left[-1 + \frac{2k - 1}{2n}, -1 + \frac{2k}{2n} \right), k \in \{ 1, \ldots, 2n \}.
  \end{cases}
\end{equation}
Clearly, the mapping $(0, w_1(\cdot), w_2(\cdot))$ with $w_1(x) = 0$ and $w_2 = (0, \rho_n(x^{(2)}))$ for all 
$x = (x^{(1)}, x^{(2)}) \in \Omega$ is a measurable selection of the multifunction 
$\overline{d}_{u, \xi} f(\cdot, u_*(\cdot), \nabla u_*(\cdot))$ for all $n \in \mathbb{N}$. To verify whether the
optimality conditions from Theorem~\ref{thrm:UnconstrainedCalcVarProbl} hold true, suppose that 
there exists $\zeta \in \xL^{\infty}(\Omega; \xR^{1 \times 2}; \diverg)$ such that
$$
  (0, \diverg(\zeta)(x), \zeta(x)) \in \underline{d}_{u, \xi} f(x, u_*(x), \nabla u_*(x)) + (0, w_1(x), w_2(x))
  = \co\big\{ (0, 0, \pm 1, \rho_n(x^{(2)})) \big\}
$$
for a.e. $x \in \Omega$. Hence $\diverg(\zeta)(x) = 0$, $|\zeta_1(x)| \le 1$, and $\zeta_2(x) = \rho_n(x^{(2)})$ for
a.e. $x \in \Omega$. Consequently, by the definition of weak divergence one has
\begin{equation} \label{eq:WeakDiverg_UnconstExample}
  \int_{\Omega} \big( \zeta_1(x) \varphi'_{x^{(1)}}(x) + \rho_n(x^{(2)}) \varphi'_{x^{(2)}}(x) \big) dx = 0
  \quad \forall \varphi \in \xC^{\infty}_c(\Omega).
\end{equation}
Since both $\rho_n$ and $\zeta_1$ belong to $\xL^{\infty}(\Omega)$, the equality above holds true for all 
$\varphi \in \xWsp^{1, 1}_0(\Omega)$. Define $\psi_n(t) = 2n \int_{-1}^t \rho_n(\tau) \, d \tau$ for all 
$t \in (-1, 1)$, and for any $n \in \mathbb{N}$ put $\varphi_n(x) = (- (x^{(1)})^2 + 1)\psi_n(x^{(2)})$. Observe that 
$\varphi_n \in \xWsp^{1, 1}_0(\Omega)$ due to the fact that $\varphi_n(x) = 0$ for all $x \in \partial \Omega$. Hence
with the use of \eqref{eq:WeakDiverg_UnconstExample} one gets that
\begin{align*}
  0 = \int_{\Omega} \langle \zeta(x), \nabla \varphi_n(x) \rangle \, dx
  &= \int_{\Omega} \big( - 2 \zeta_1(x) x^{(1)} \psi_n(x^{(2)}) + 2n (- (x^{(1)})^2 + 1) \big) \, dx \\
  &= - \int_{\Omega} 2 \zeta_1(x) x^{(1)} \psi_n(x^{(2)}) \, dx + \frac{16 n}{3} \ge - 4 + \frac{16 n}{3} > 0
  \quad \forall n \ge 1,
\end{align*}
which is impossible (the penultimate inequality follows from the fact that $|\zeta_1(x)| \le 1$ and 
$\psi_n(x^{(2)}) \in [0, 1]$ for any $x \in \Omega$; see \eqref{eq:AlternStepFunc}). Thus, the optimality conditions
from Theorem~\ref{thrm:UnconstrainedCalcVarProbl} are not satisfied at $u_* = 0$, unlike optimality conditions in terms
of the Clarke subdifferential. For the sake of completeness, let us finally note that, in actuality, the functional 
$\mathcal{I}$ is unbounded below on $\xWsp^{1, 1}_0(\Omega)$, which can be easily verified directly or by noting
that $u_* = 0$ is not an optimal solution of problem \eqref{eq:UnconstrainedCalcVar_Example} and $\mathcal{I}$ is
positively homogeneous of degree one.
\end{example}

\subsection{Problems with Constraints at the Boundary}

Next we turn to problems with additional constraints at the boundary. For the sake of simplicity we study only 
the one dimensional case (i.e. $d = 1$). Our aim is to obtain optimality conditions for the problem
\begin{equation} \label{eq:BoundaryConstrCalcVarProbl}
\begin{split}
  &\min \: 
  \mathcal{I}(u) = \int_{\alpha}^{\beta} f(x, u(x), u'(x)) \, dx + g_0(u(\alpha), u(\beta)),
  \quad u \in \xWsp^{1, p}((\alpha, \beta); \xR^m) \\
  &\text{subject to} \quad
  g_i(u(\alpha), u(\beta)) \le 0, \: i \in I, \quad g_j(u(\alpha), u(\beta)) = 0, \: j \in J.
\end{split}
\end{equation}
Here $\alpha, \beta \in \xR$, $\alpha < \beta$ (i.e. $\Omega = (\alpha, \beta)$),
$f \colon (\alpha, \beta) \times \xR^m \times \xR^m \to \xR$, and
$g_i \colon \xR^m \times \xR^m \to \xR$, $i \in I \cup J \cup \{ 0 \}$ are given nonsmooth
functions, $I = \{ 1, \ldots, \ell_1 \}$ and $J = \{ \ell_1 + 1, \ldots, \ell_2 \}$ for some 
$\ell_1, \ell_2 \in \mathbb{N} \cup \{ 0 \}$. Observe also that the set $\Omega = (\alpha, \beta)$ is obviously bounded
and has the segment property.

For any $u \in \xWsp^{1, p}((\alpha, \beta); \xR^m)$ denote $I(u) = \{ i \in I \mid g_i(u(\alpha), u(\beta)) = 0 \}$.
For any subset $C$ of a real vector space $E$ denote by
$$
  \cone C = \Big\{ \sum_{i = 1}^n \alpha_i x_i \Bigm| 
  x_i \in C, \enspace \alpha_i \ge 0, \enspace i \in \{ 1, \ldots, n \}, \enspace n \in \mathbb{N} \Big\}
$$
the \textit{conic hull} of $C$ (i.e. the smallest convex cone containing the set $C$). 

To derive optimality conditions for problem \eqref{eq:BoundaryConstrCalcVarProbl} we will use general optimality
conditions for nonsmooth mathematical programming problems in infinite dimensional spaces in terms of
quasidifferentials \cite{Dolgopolik_MetricReg,Dolgopolik_QuasidiffProg}. To this end, we will suppose that the equality
constraints are \textit{polyhedrally codifferentiable}, that is, they are codifferentiable and the sets 
$\underline{d} g_j(u(\alpha), u(\beta))$ and  $\overline{d} g_j(u(\alpha), u(\beta))$ are \textit{polytopes} (i.e.
convex hulls of a finite number of points). This assumption is needed to ensure that certain cones generated by these
sets are closed. It should be noted that this assumption can be replaced by a more restrictive constraint qualification
(see \cite{Dolgopolik_MetricReg,Dolgopolik_QuasidiffProg} for more details). For the sake of shortness, we do not
consider this alternative assumption and leave it to the interested reader. 

Let us also point out that most of codifferentiable functions appearing in applications are, in fact, polyhedrally
codifferentiable (see numerous examples in \cite{DemRub_book}). Thus, the assumption that the equality constraints are
polyhedrally codifferentiable is not very restrictive.

\begin{theorem} \label{thrm:BoundaryConstrainedProbl}
Let $f$ satisfy the codifferentiability conditions of order $p \in [1, + \infty]$, the set-valued maps
$\underline{d}_{u, \xi} f(\cdot)$ and $\overline{d}_{u, \xi} f(\cdot)$ have the form \eqref{eq:IntegrandCodiffL1} in
the case $p = 1$, and $u_*$ be a locally optimal solution of problem \eqref{eq:BoundaryConstrCalcVarProbl}. Suppose also
that the functions $g_i$, $i \in I \cup J \cup \{ 0 \}$, are continuously codifferentiable at the point 
$(u_*(\alpha), u_*(\beta))$, and the sets $\underline{d} g_j(u_*(\alpha), u_*(\beta))$ 
and $\overline{d} g_j(u_*(\alpha), u_*(\beta))$, $j \in J$, are polytopes.
Let finally vectors $(0, s_{1i}, s_{2i}) \in \overline{d} g_i(u_*(\alpha), u_*(\beta))$, $i \in I$,
$(0, s_{1j}, s_{2j}) \in \overline{d} g_j(u_*(\alpha), u_*(\beta))$, and
$(0, r_{1j}, r_{2j}) \in \underline{d} g_j(u_*(\alpha), u_*(\beta))$, $j \in J$, be such that the following constraint
qualification holds true:
\begin{gather} \label{eq:CQBoundaryConstraints1}
  C_j \cap \cone\big\{ - C_k \bigm| k \in J \setminus \{ j \} \big\} = \emptyset \quad \forall j \in J \\
  \co\big\{ \underline{\partial} g_i(u_*(\alpha), u_*(\beta)) + (s_{1i}, s_{2i}) \bigm| i \in I(u_*) \big\} 
  \cap \cone\big\{ - C_j \bigm| j \in J \} = \emptyset, \label{eq:CQBoundaryConstraints2}
\end{gather}
where $C_j = \{ \underline{\partial} g_j(u_*(\alpha), u_*(\beta)) + (s_{1j}, s_{2j}) \} 
\cup \{ - (r_{1j}, r_{2j}) - \overline{\partial} g_j(u_*(\alpha), u_*(\beta)) \}$, and the sets
$\underline{\partial} g_j(u_*(\alpha), u_*(\beta))$ and $\overline{\partial} g_j(u_*(\alpha), u_*(\beta))$
are defined as in Lemma~\ref{lem:ContCodiffQuasidiffUniform}.

Then for all $(0, s_{10}, s_{20}) \in \overline{d} g_0(u_*(\alpha), u_*(\beta))$ and for any measurable
selection $(0, w_1(\cdot), w_2(\cdot))$ of the multifunction 
$\overline{d}_{u, \xi} f(\cdot, u_*(\cdot), u'_*(\cdot))$ there exist an absolutely continuous function 
$\zeta \in \xWsp^{1, p'}((\alpha, \beta); \xR^m)$, $\lambda_i \ge 0$, 
$i \in I$, and $\underline{\mu}_j, \overline{\mu}_j \ge 0$, $j \in J$, such that 
$\lambda_i g_i(u_*(\alpha), u_*(\beta)) = 0$ for all $i \in I$, the Euler-Lagrange inclusion
\begin{equation} \label{eq:EulerLagrangeBoundary}
  (0, \zeta'(x), \zeta(x)) \in \underline{d}_{u, \xi} f(x, u_*(x), u'_*(x)) + (0, w_1(x), w_2(x))
\end{equation}
is satisfied for a.e. $x \in (\alpha, \beta)$, and the following transversality condition holds true:
\begin{multline} \label{eq:Transversality}
  (0, \zeta(\alpha), - \zeta(\beta)) \in \underline{d} g_0(u_*(\alpha), u_*(\beta)) + (0, s_{10}, s_{20})
  + \sum_{i = 1}^{\ell_1} \lambda_i \big( \underline{d} g_i(u_*(\alpha), u_*(\beta)) + (0, s_{1i}, s_{2i}) \big) \\
  + \sum_{j = \ell_1 + 1}^{\ell_2} \underline{\mu}_j 
  \big( \underline{d} g_j(u_*(\alpha), u_*(\beta)) + (0, s_{1j}, s_{2j}) \big)
  - \sum_{j = \ell_1 + 1}^{\ell_2} \overline{\mu}_j 
  \big( (0, r_{1j}, r_{2j}) + \overline{d} g_j(u_*(\alpha), u_*(\beta)) \big).
\end{multline}
\end{theorem}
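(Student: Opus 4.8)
The plan is to recast problem \eqref{eq:BoundaryConstrCalcVarProbl} as an abstract nonsmooth mathematical programming problem in the Banach space $X = \xWsp^{1, p}((\alpha, \beta); \xR^m)$ and then apply the general optimality conditions in terms of quasidifferentials from \cite{Dolgopolik_QuasidiffProg,Dolgopolik_MetricReg}. Since $d = 1$, every $u \in X$ has an absolutely continuous representative and the evaluation operator $T \colon X \to \xR^m \times \xR^m$, $T u = (u(\alpha), u(\beta))$, is a bounded linear (hence continuously Fr\'{e}chet differentiable, with $T' = T$) and surjective operator; surjectivity yields that the adjoint $T^*$, which sends $(s_1, s_2)$ to the functional $h \mapsto \langle s_1, h(\alpha) \rangle + \langle s_2, h(\beta) \rangle$, is injective. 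First I would observe that the objective $\mathcal{I}(u) = \mathcal{I}_0(u) + g_0(T u)$, where $\mathcal{I}_0$ is the integral functional, is Hadamard quasidifferentiable: $\mathcal{I}_0$ is so by Corollary~\ref{crlr:QuasidiffIntegralFunctional}, each composition $g_i \circ T$ is continuously codifferentiable by Proposition~\ref{prp:CodiffCalc}(5) and hence Hadamard quasidifferentiable by Lemma~\ref{lem:ContCodiffQuasidiffUniform}, and the class is closed under sums. The same reasoning applies to every constraint function $G_i = g_i \circ T$, and the chain rule of Proposition~\ref{prp:CodiffCalc}(5) together with \eqref{eq:QuasidiffOfCodiffFunc} shows that the subdifferential and superdifferential of $G_i$ at $u_*$ are the $T^*$-images of those of $g_i$ at $(u_*(\alpha), u_*(\beta))$.

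Next I would apply the necessary optimality conditions for quasidifferentiable programming from \cite{Dolgopolik_QuasidiffProg,Dolgopolik_MetricReg}. Because all constraints factor through $T$ and $T^*$ is injective, the constraint qualification \eqref{eq:CQBoundaryConstraints1}--\eqref{eq:CQBoundaryConstraints2}, stated in the finite dimensional boundary space $\xR^m \times \xR^m$, is equivalent to the corresponding qualification required by those general results in $X^*$; moreover, the assumed polyhedrality of $\underline{d} g_j(u_*(\alpha), u_*(\beta))$ and $\overline{d} g_j(u_*(\alpha), u_*(\beta))$ guarantees that the cones $\cone\{ -C_k \}$ are closed, which is exactly the hypothesis under which those conditions hold with Lagrange multipliers. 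Applying the general theorem then yields, for the fixed superdifferential selections $(0, s_{10}, s_{20})$ and $(b, w_1, w_2)$ with $b = 0$, multipliers $\lambda_i \ge 0$ with $\lambda_i g_i(u_*(\alpha), u_*(\beta)) = 0$ and $\underline{\mu}_j, \overline{\mu}_j \ge 0$, together with an element $x_0^* \in \underline{\partial} \mathcal{I}_0(u_*)$ and the superdifferential functional $y_0^* \in \overline{\partial} \mathcal{I}_0(u_*)$ induced by $(0, w_1, w_2)$, such that in $X^*$ one has $x_0^* + y_0^* + T^* q = 0$, where $q \in \xR^m \times \xR^m$ lies in the finite dimensional sum appearing on the right-hand side of \eqref{eq:Transversality} (restricted to its last two coordinates).

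Finally I would unfold this identity. By Remark~\ref{rmrk:QuasidiffIntegralFunctional} the functionals $x_0^*$ and $y_0^*$ are represented by measurable selections $(0, v_1, v_2)$ and $(0, w_1, w_2)$ of $\underline{d}_{u, \xi} f$ and $\overline{d}_{u, \xi} f$ along $(u_*, u_*')$, so that $x_0^* + y_0^* + T^* q = 0$ means $\int_\alpha^\beta ( \langle v_1 + w_1, h \rangle + \langle v_2 + w_2, h' \rangle ) \, dx + \langle q_1, h(\alpha) \rangle + \langle q_2, h(\beta) \rangle = 0$ for all $h \in X$. Testing first with $h \in \xWsp^{1, p}_0((\alpha, \beta); \xR^m)$ annihilates the boundary terms and, exactly as in the proof of Theorem~\ref{thrm:UnconstrainedCalcVarProbl}, shows that $\zeta := v_2 + w_2$ possesses a weak derivative with $\zeta' = v_1 + w_1$; since $v_1 + w_1, v_2 + w_2 \in \xL^{p'}$ by the growth condition of Definition~\ref{def:CodiffCond}, the function $\zeta$ belongs to $\xWsp^{1, p'}((\alpha, \beta); \xR^m)$ and is absolutely continuous, and $(0, \zeta', \zeta) = (0, v_1 + w_1, v_2 + w_2) \in \underline{d}_{u, \xi} f(x, u_*(x), u_*'(x)) + (0, w_1(x), w_2(x))$, which is the Euler--Lagrange inclusion \eqref{eq:EulerLagrangeBoundary}. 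Testing then with arbitrary $h \in X$ and integrating $\int_\alpha^\beta \langle \zeta, h' \rangle \, dx$ by parts cancels the integral against $\zeta' = v_1 + w_1$ and leaves $\langle q_1 - \zeta(\alpha), h(\alpha) \rangle + \langle q_2 + \zeta(\beta), h(\beta) \rangle = 0$ for all $h$; surjectivity of $T$ forces $q = (\zeta(\alpha), -\zeta(\beta))$, and substituting this into the membership of $q$ in the finite dimensional sum (lifting it to the first coordinate $0$) yields the transversality condition \eqref{eq:Transversality}. I expect the main obstacle to be the careful verification that the finite dimensional constraint qualification, the polyhedrality, and the Hadamard quasidifferentiability together match the precise hypotheses of the abstract theorem of \cite{Dolgopolik_QuasidiffProg,Dolgopolik_MetricReg}, together with the bookkeeping that separates the single dual identity into the pointwise Euler--Lagrange inclusion and the transversality condition via the du Bois-Reymond type regularity argument.
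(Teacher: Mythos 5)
Your proof is correct, but it takes a technically different route from the paper's. The paper first reparametrises the problem through the continuous bijection $\mathcal{T}(\eta, h)(x) = \eta + \int_{\alpha}^{x} h(\omega)\,d\omega$ and works in the product space $\xR^m \times \xL^p((\alpha,\beta);\xR^m)$, computing explicit hypodifferentials of the transformed functionals there (formulas \eqref{eq:HypodiffIntegralTransformed} and \eqref{eq:HypodiffOfBoundaryConstraint}) and then \emph{constructing} $\zeta$ by the explicit formula $\zeta(x) = -\int_x^{\beta}(v_1+w_1)\,d\tau - (\text{boundary constants})$; the transversality and Euler--Lagrange conditions drop out by splitting the resulting dual identity according to the $\eta$- and $h$-components. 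You instead stay in $\xWsp^{1,p}((\alpha,\beta);\xR^m)$, factor all boundary terms through the surjective evaluation operator $T u = (u(\alpha), u(\beta))$, and recover $\zeta$ and the transversality condition a posteriori by a du Bois-Reymond argument (testing against $\xWsp^{1,p}_0$ first, then integrating by parts against general $h$ and using surjectivity of $T$ to identify $q = (\zeta(\alpha), -\zeta(\beta))$). The two approaches are isomorphic in substance: your injective adjoint $T^*$ plays exactly the role of the paper's injective operator $\Theta$ in transferring the finite-dimensional constraint qualification \eqref{eq:CQBoundaryConstraints1}--\eqref{eq:CQBoundaryConstraints2} to the function space, and both invoke the same abstract quasidifferentiable-programming theorem with polyhedrality guaranteeing closedness of the relevant cones. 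Your version buys a cleaner setup (no change of space, no explicit computation of the transformed hypodifferentials) at the price of the final integration-by-parts bookkeeping, which is harmless here since $\zeta \in \xWsp^{1,p'}$ and $h \in \xWsp^{1,p}$ make $\langle \zeta, h'\rangle$ integrable by H\"older's inequality; note also that your conclusion places $(0,\zeta(\alpha),-\zeta(\beta))$ in the sum of the zero-level slices of the codifferentials, which is a subset of the right-hand side of \eqref{eq:Transversality} and hence suffices.
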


\begin{proof}
Let us transform problem \eqref{eq:BoundaryConstrCalcVarProbl}. To this end, recall that 
$u \in \xWsp^{1, p}((\alpha, \beta); \xR^m))$ if and only if there exists $h \in \xL^{p}((\alpha, \beta); \xR^m)$
such that $u(x) = u(\alpha) + \int_{\alpha}^x h(\tau) d \tau$ for a.e. $x \in (\alpha, \beta)$ (see, e.g. \cite{Leoni}).
Therefore, the linear operator 
$\mathcal{T} \colon \xR^m \times \xL^{p}((\alpha, \beta); \xR^m) \to \xWsp^{1, p}((\alpha, \beta); \xR^m))$
defined as $\mathcal{T}(\eta, h)(x) = \eta + \int_{\alpha}^x h(\omega) \, d \omega$ is a continuous one-to-one
correspondence. Consequently, the pair $(u_*(\alpha), u'_*)$ is a point of local minimum of the problem
\begin{equation} \label{eq:TransformedBdConstrProblem}
  \min_{(\eta, h) \in \xR^m \times \xL^{p}((\alpha, \beta); \xR^m))} \: \mathcal{J}_0(\eta, h)
  \quad \text{s.t.} \quad \mathcal{J}_i(\eta, h) \le 0, \: i \in I, \quad
  \mathcal{J}_j(\eta, h) = 0, \: j \in J.
\end{equation}
where $\mathcal{J}_0(\eta, h) = \mathcal{I}(\mathcal{T}(\eta, h))$ and 
$\mathcal{J}_i(\eta, h) = g_i(\eta, \mathcal{T}(\eta, h)(\beta))$, $i \in I \cup J$.

By our assumption the functions $g_i$, $i \in I \cup \{ 0 \}$ are continuously codifferentiable at 
$(u_*(\alpha), u_*(\beta))$, while the functional $\int_{\alpha}^{\beta} f(x, u(x), u'(x)) \, dx$ is continuously
codifferentiable by Theorem~\ref{thm:CodiffIntegralFunctional}. Consequently, by
\cite[Thrm.~4.5]{Dolgopolik_AbstrConvApprox} the functions $\mathcal{J}_i$, $i \in I \cup J \cup \{ 0 \}$ are
continuously codifferentiable at the point $(u_*(\alpha), u'_*)$, the set
\begin{multline} \label{eq:HypodiffIntegralTransformed}
  \Big\{ (A, x^*) \in \xR \times \big( \xR^m \times \xL^{p}((\alpha, \beta); \xR^m) \big)^* \Bigm|
  A = \int_{\alpha}^{\beta} a(x) \, dx + a_0, \quad
  \forall (\eta, h) \in \xR^m \times \xL^{p}((\alpha, \beta); \xR^m)
  \\ 
  \langle x^*, (\eta, h) \rangle 
  = \Big\langle \int_{\alpha}^{\beta} v_1(x) \, dx + r_1 + r_2, \eta \Big\rangle 
  + \int_{\alpha}^{\beta} \Big\langle \int_x^{\beta} v_1(\tau) d \tau + v_2(x) + r_2, h(x) \Big\rangle \, dx, 
  \\
  (a_0, r_1, r_2) \in \underline{d} g_0(u_*(\alpha), u_*(\beta)), \enspace
  (a(\cdot), v_1(\cdot), v_2(\cdot)) \text{ is a measurable selection of }
  \underline{d}_{u, \xi} f(\cdot, u_*(\cdot), u'_*(\cdot)) \Big\}
\end{multline}
is a hypodifferential of $\mathcal{J}_0$ at $(u_*(\alpha), u'_*)$, while the set
\begin{multline} \label{eq:HypodiffOfBoundaryConstraint}
  \Big\{ (a, x^*) \in \xR \times \big( \xR^m \times \xL^{p}((\alpha, \beta); \xR^m) \big)^* \Bigm|
  \langle x^*, (\eta, h) \rangle = \langle r_1 + r_2, \eta \rangle + \int_{\alpha}^{\beta} \langle r_2, h(x) \rangle dx
  \\ \forall (\eta, h) \in \xR^m \times \xL^{p}((\alpha, \beta); \xR^m), \quad
  (a, r_1, r_2) \in \underline{d} g_i(u_*(\alpha), u_*(\beta)) \Big\}
\end{multline}
is a hypodifferential of $\mathcal{J}_i$ at $(u_*(\alpha), u'_*)$, $i \in I \cup J$. The hyperdifferentials 
$\overline{d} \mathcal{J}_i(u_*(\alpha), u'_*)$ are defined in the same way. Thus, by
Lemma~\ref{lem:ContCodiffQuasidiffUniform} the functions $\mathcal{J}_i$, $i \in I \cup J \cup \{ 0 \}$, are Hadamard
quasidifferentiable at $(u_*(\alpha), u'_*)$.

With the use of optimality conditions for nonsmooth mathematical programming problems in terms of quasidifferentials
\cite[Crlr.~4, Prp.~1, and Lemma~2]{Dolgopolik_QuasidiffProg} one obtains that if for some 
$x_j^* \in \underline{\partial} \mathcal{J}_j(u_*(\alpha), u'_*)$, $j \in J$, and
$y_i^* \in \overline{\partial} \mathcal{J}_i(u_*(\alpha), u'_*)$, $i \in I \cup J$, one has
\begin{equation} \label{eq:CQBoundaryConstraints_Rewritten}
  D_j \cap \cone\{ - D_k \mid k \in J \setminus \{ j \} \} = \emptyset \enspace \forall j \in J, \enspace
  \co\big\{ \underline{\partial} \mathcal{J}_i(u_*(\alpha), u'_*) + y_i^* \bigm| i \in I(u_*) \big\} \cap 
  \cone\{ - D_j \mid j \in J \} = \emptyset
\end{equation}
(here $D_j = \{ \underline{\partial} \mathcal{J}_j(u_*(\alpha), u'_*) + y_j^* \} \cup 
\{ - x_j^* - \overline{\partial} \mathcal{J}_j(u_*(\alpha), u'_*) \}$ for any $j \in J$), 
then for any $y_0^* \in \overline{\partial} \mathcal{J}_0(u_*(\alpha), u'_*)$ there exist 
$\lambda_i \ge 0$, $i \in I$, and $\underline{\mu}_j, \overline{\mu}_j \ge 0$, $j \in J$, such that
$\lambda_i \mathcal{J}_i(u_*(\alpha), u'_*) = 0$ for any $i \in I$ and
\begin{equation} \label{eq:BoundaryConstr_AbstractOptCond}
\begin{split}
  0 \in \underline{\partial} \mathcal{J}_0(u_*(\alpha), u'_*) + y_0^* 
  &+ \sum_{i \in I} \lambda_i \big( \underline{\partial} \mathcal{J}_j(u_*(\alpha), u'_*) + y_i^* \big) \\
  &+ \sum_{j \in J} \underline{\mu}_j \big( \underline{\partial} \mathcal{J}_j(u_*(\alpha), u'_*) + y_j^* \big)
  - \sum_{j \in J} \overline{\mu}_j \big( x_j^* + \overline{\partial} \mathcal{J}_j(u_*(\alpha), u'_*) \big).
\end{split}
\end{equation}
Let us rewrite these optimality conditions in terms of the original problem \eqref{eq:BoundaryConstrCalcVarProbl}.

For all $(s_1, s_2) \in \xR^m \times \xR^m$ define linear functional $\Theta(s_1, s_2)$
as follows: 
\begin{equation} \label{eq:LinFuncCorrespondBoundaryTerms}
  \langle \Theta(s_1,s_2), (\eta, h) \rangle 
  = \langle s_{1i} + s_{2i}, \eta \rangle + \int_{\alpha}^{\beta} \langle s_{2i}, h(x) \rangle dx
  \quad \forall (\eta, h) \in \xR^m \times \xL^{p}((\alpha, \beta); \xR^m).
\end{equation}
Fix any $(0, s_{1i}, s_{2i}) \in \overline{d} g_i(u_*(\alpha), u_*(\beta))$, $i \in I \cup J$, and
$(0, r_{1j}, r_{2j}) \in \underline{d} g_j(u_*(\alpha), u_*(\beta))$, $j \in J$ satisfying 
\eqref{eq:CQBoundaryConstraints1} and \eqref{eq:CQBoundaryConstraints2}, and put
$y_i^* = \Theta(s_{1i}, s_{2i})$, $i \in I \cup J$, and $x_j^* = \Theta(r_{1j}, r_{2j})$, $j \in J$.
Then $y_i^* \in \overline{\partial} \mathcal{J}_i(u_*(\alpha), u_*')$ for all $i \in I \cup J$ and
$x_j^* \in \underline{\partial} \mathcal{J}_j(u_*(\alpha), u_*')$ for all $j \in J$
according to \eqref{eq:HypodiffOfBoundaryConstraint} and Lemma~\ref{lem:ContCodiffQuasidiffUniform}. Let us check that
these functionals $y_i^*$ and $x_j^*$ satisfy constraint qualification \eqref{eq:CQBoundaryConstraints_Rewritten}.

Indeed, by virtue of \eqref{eq:HypodiffOfBoundaryConstraint} and Lemma~\ref{lem:ContCodiffQuasidiffUniform} one has
$\Theta(\underline{\partial} g_j(u_*(\alpha), u_*(\beta)) = \underline{\partial} \mathcal{J}_j(u_*(\alpha), u_*')$ and
the same equality holds true for the superdifferentials. Hence taking into account the fact that $\Theta$ is a linear
operator (see~\eqref{eq:LinFuncCorrespondBoundaryTerms}) one obtains that $\Theta(C_j) = D_j$, $j \in J$, and
$\Theta( \cone\{ C_k \mid k \in J \setminus \{ j \} \} ) = \cone\{ D_k \mid k \in J \setminus \{ j \} \}$ 
for all $j \in J$. One can readily verify that $\Theta$ is an injective mapping
(see~\eqref{eq:LinFuncCorrespondBoundaryTerms}). Therefore \eqref{eq:CQBoundaryConstraints1} implies the first condition
in \eqref{eq:CQBoundaryConstraints_Rewritten}. Similarly, \eqref{eq:CQBoundaryConstraints2} implies the second condition
in \eqref{eq:CQBoundaryConstraints_Rewritten}.

Thus, constraint qualification \eqref{eq:CQBoundaryConstraints_Rewritten} is satisfied. Consequently, with the use of
\eqref{eq:BoundaryConstr_AbstractOptCond}, \eqref{eq:HypodiffIntegralTransformed},
\eqref{eq:HypodiffOfBoundaryConstraint}, and Lemma~\ref{lem:ContCodiffQuasidiffUniform} one gets that for all 
$(0, s_{10}, s_{20}) \in \overline{d} g_0(u_*(\alpha), u_*(\beta))$ and for any measurable selection 
$(0, w_1(\cdot), w_2(\cdot))$ of the multifunction $\overline{d}_{u, \xi} f(\cdot, u_*(\cdot), u'_*(\cdot))$  there
exist $\lambda_i \ge 0$, $i \in I$, $\underline{\mu}_j, \overline{\mu}_j \ge 0$, $j \in J$,
vectors $(0, r_{1i}, r_{2i}) \in \underline{d} g_i(u_*(\alpha), u_*(\beta))$, $i \in I \cup \{ 0 \}$, 
$(0, \xi_{1j}, \xi_{2j}) \in \underline{d} g_j(u_*(\alpha), u_*(\beta))$, $j \in J$, and
$(0, y_{1j}, y_{2j}) \in \overline{d} g_j(u_*(\alpha), u_*(\beta))$, $j \in J$, and a measurable
selection $(0, v_1(\cdot), v_2(\cdot))$ of the multifunction $\underline{d}_{u, \xi} f(\cdot, u_*(\cdot), u'_*(\cdot))$
such that for all $i \in I$ one has $\lambda_i g_i(u_*(\alpha), u_*(\beta)) = 0$, for any $\eta \in \mathbb{R}^m$ one
has
\begin{multline} \label{eq:TransversalityRewritten}
  \bigg\langle \int_{\alpha}^{\beta} (v_1(x) + w_1(x)) \, dx + r_{10} + r_{20} + s_{10} + s_{20}
  + \sum_{i \in I} \lambda_i \big( r_{1i} + r_{2i} + s_{1i} + s_{2i} \big) \\
  + \sum_{j \in J} \underline{\mu}_j \big( \xi_{1j} + \xi_{2j} + s_{1j} + s_{2j} \big) 
  - \sum_{j \in J} \overline{\mu}_j \big( r_{1j} + r_{2j} + y_{1j} + y_{2j} \big), \eta \bigg \rangle = 0,
\end{multline}
and for any $h \in \xL^{p}((\alpha, \beta); \xR^m)$ one has
\begin{multline} \label{eq:EulerLagrangeBoundaryRewritten}
  \int_{\alpha}^{\beta} \bigg\langle \int_{x}^{\beta} (v_1(\tau) + w_1(\tau)) \, d \tau + v_2(x) + w_2(x)
  + r_{20} + s_{20} \\
  + \sum_{i \in I} \lambda_i \big( r_{2i} + s_{2i} \big)
  + \sum_{j \in J} \underline{\mu}_j \big( \xi_{2j} + s_{2j} \big)
  - \sum_{j \in J} \overline{\mu}_j \big( r_{2j} + y_{2j} \big), h(x) \bigg\rangle \, dx = 0.
\end{multline}
Denote 
$$
  \zeta(x) = - \int_{x}^{\beta} (v_1(\tau) + w_1(\tau)) \, d \tau
  - r_{20} - s_{20}
  - \sum_{i \in I} \lambda_i \big( r_{2i} + s_{2i} \big)
  - \sum_{j \in J} \underline{\mu}_j \big( \xi_{2j} + s_{2j} \big)
  + \sum_{j \in J} \overline{\mu}_j \big( r_{2j} + y_{2j} \big)
$$
for any $x \in [\alpha, \beta]$. Then $\zeta$ is an absolutely continuous function such that
$$
  (0, \zeta'(x), \zeta(x)) = (0, v_1(x) + w_1(x), v_2(x) + w_2(x)) \quad \text{for a.e. } x \in (\alpha, \beta)
$$
due to \eqref{eq:EulerLagrangeBoundaryRewritten}, $\zeta \in \xWsp^{1, p'}((\alpha, \beta); \xR^m)$ due to the
growth condition on the codifferential mapping $D_{u, \xi} f(\cdot)$ (see Def.~\ref{def:CodiffCond}),
and
\begin{align*}
  (0, \zeta(\alpha), -\zeta(\beta)) &= (0, r_{10} + s_{10}, r_{20} + s_{20})
  + \sum_{i \in I} \lambda_i \big( 0, r_{1i} + s_{1i}, r_{2i} + s_{2i} \big) \\
  &+ \sum_{j \in J} \underline{\mu}_j \big( 0, \xi_{1j} + s_{1j}, \xi_{2j} + s_{2j} \big)
  - \sum_{j \in J} \overline{\mu}_j \big( 0, r_{1j} + y_{1j}, r_{2j} + y_{2j} \big)
\end{align*}
due to \eqref{eq:TransversalityRewritten}. It remains to note that the first equality above is equivalent to
\eqref{eq:EulerLagrangeBoundary}, while the second one is equivalent to the transversality condition
\eqref{eq:Transversality}.
\end{proof}

\begin{remark} \label{rmrk:BoundaryConstraints}
{(i)~In the case when there are no equality constraints, the constraint qualification \eqref{eq:CQBoundaryConstraints1},
\eqref{eq:CQBoundaryConstraints2} from the previous theorem takes an especially simple form. Namely, it is sufficient to
suppose that for some $(0, s_{1i}, s_{2i}) \in \overline{d} g_i(u_*(\alpha), u_*(\beta))$, $i \in I(u_*)$, one has
$0 \notin \co\{ \underline{d} g_i(u_*(\alpha), u_*(\beta)) + (0, s_{1i}, s_{2i}) \mid i \in I(u_*) \}$. In the case when
there are no inequality constraints and there is only one equality constraint, the constraint qualification
\eqref{eq:CQBoundaryConstraints1}, \eqref{eq:CQBoundaryConstraints2} also takes a very simple form. One has to suppose
that $0 \notin \underline{d} g_1(u_*(\alpha), u_*(\beta)) + (0, s_1, s_2)$ and 
$0 \notin (0, r_1, r_2) + \overline{d} g_1(u_*(\alpha), u_*(\beta))$ 
for some $(0, s_1, s_2) \in \overline{d} g_1(u_*(\alpha), u_*(\beta))$ and
$(0, r_1, r_2) \in \underline{d} g_1(u_*(\alpha), u_*(\beta))$.
}

\noindent{(ii)~It should be noted that there are \textit{two} Lagrange multipliers $\underline{\mu}_j$ and
$\overline{\mu}_j$ corresponding to each equality constraint $g_j (u(\alpha), u_j(\beta)) = 0$, which is a specific
feature of optimality conditions for nonsmooth optimisation problems in terms of quasidifferentials. See
\cite{Dolgopolik_MetricReg,Dolgopolik_QuasidiffProg} for more details.
}
\end{remark}

Let us also present an example illustrating optimality conditions for problem \eqref{eq:BoundaryConstrCalcVarProbl}.

\begin{example} \label{example:BoundaryConstrainedProblem}
Let $d = 1$, $m = 2$, $p = 2$, and $\Omega = (0, 1)$. Consider the following problem:
\begin{equation} \label{eq:BoundaryConstr_Example}
\begin{split}
  &\min \: 
  \mathcal{I}(u_1, u_2) 
  = \int_0^1 \left( \frac{1}{2} \big( (u_1'(x))^2 + (u_2'(x))^2 \big) + |u_1(x)| + |u_2(x)| \right) \, dx 
  - u_1(1) + u_2(1) \\
  &\text{subject to} \quad |u_1(1)| - |u_2(1)| = 0, \quad u \in \xWsp^{1, 2}((0, 1); \xR^m).
\end{split}
\end{equation}
In this case, $f(x, u, \xi) = |u_1| + |u_2| + 0.5 (\xi_1)^2 + 0.5 (\xi_2)^2$, $g_0(u(0), u(1)) = - u_1(1) + u_2(1)$, 
$I = \emptyset$, $J = \{ 1 \}$, and $g_1(u(0), u(1)) = |u_1(1)| - |u_2(1)|$. Let us check whether the function 
$u_* = (0, 0)$ is an optimal solution of problem \eqref{eq:BoundaryConstr_Example}.

First we apply optimality conditions for problem \eqref{eq:BoundaryConstr_Example} in terms of the Clarke
subdifferential \cite[Thrm.~4.4.1]{Clarke}. Denote $L(u, \xi) = |u_1| + |u_2| + 0.5 (\xi_1)^2 + 0.5 (\xi_2)^2$.
The Hamiltonian for problem \eqref{eq:BoundaryConstr_Example} is defined as
$$
  H(u, p) = \sup_{\xi \in \mathbb{R}^2} \big( \langle p, \xi \rangle - L(u, \xi) \big)
  = - |u_1| - |u_2| + \frac{1}{2} \big( p_1^2 + p_2^2 \big).
$$
The Clarke subdifferentials of the Hamiltonian and the function $\ell(u(1)) = - u_1(1) + u_2(1)$ at the origin 
have the form:
$$
  \partial_{Cl} H(0, 0) = \co\left\{ \left( \begin{smallmatrix} 1 \\ 1 \\ 0 \\ 0 \end{smallmatrix} \right),
  \left( \begin{smallmatrix} 1 \\ -1 \\ 0 \\ 0 \end{smallmatrix} \right),
  \left( \begin{smallmatrix} -1 \\ 1 \\ 0 \\ 0 \end{smallmatrix} \right),
  \left( \begin{smallmatrix} -1 \\ -1 \\ 0 \\ 0 \end{smallmatrix} \right) \right\}, \quad
  \partial_{Cl} \ell(0) = \left\{ \left( \begin{smallmatrix} -1 \\ 1 \end{smallmatrix} \right) \right\}.
$$
Furthermore, one can verify that the Clarke normal cone $N_S(0)$ to 
the set $S = \{ (y, z) \in \xR^2 \mid |y| - |z| = 0 \}$ at the origin (see \cite[Sect.~2.4]{Clarke}) is equal 
to $\xR^2$. Therefore, for the function $p_*(x) \equiv 0$ one has
$$
  \left( \begin{smallmatrix} - p_*'(x) \\ u_*'(x) \end{smallmatrix} \right) \in \partial_{Cl} H(u_*(x), p_*(x))
  \quad \forall x \in [0, 1], \quad
  p_*(0) = 0, \quad - p_*(1) \in \partial_{Cl} \ell(u_*(1)) + N_S(u_*(1)),
$$
i.e. optimality conditions in terms of the Clarke subdifferential \cite[Thrm.~4.4.1]{Clarke} are satisfied at $u_*$.

Next we check optimality conditions in terms of the limiting proximal subdifferential from \cite{IoffeRockafellar}.
Define $\ell(u(0), u(1)) = - u_1(1) + u_2(1)$, if $|u_1(1)| - |u_2(1)| = 0$, and $\ell(u(0), u(1)) = + \infty$,
otherwise. Then problem \eqref{eq:BoundaryConstr_Example} can be rewritten as the following generalised problem of
Bolza:
\begin{equation} \label{eq:GeneralisedBolzaProblem}
  \min \: \ell(u(0), u(1)) + \int_0^1 L(u(x), u'(x)) \, dx, \quad u \in W^{1, 1}((0, 1); \xR^2).
\end{equation}
One can readily verify that the limiting proximal subdifferentials, which we denote by $\partial_p^{\infty}$, of the
functions $L$ and $\ell$ at the origin have the form:
$$
  \partial_p^{\infty} L(0, 0) = \co\left\{ \left( \begin{smallmatrix} 1 \\ 1 \\ 0 \\ 0 \end{smallmatrix} \right),
  \left( \begin{smallmatrix} 1 \\ -1 \\ 0 \\ 0 \end{smallmatrix} \right),
  \left( \begin{smallmatrix} -1 \\ 1 \\ 0 \\ 0 \end{smallmatrix} \right),
  \left( \begin{smallmatrix} -1 \\ -1 \\ 0 \\ 0 \end{smallmatrix} \right) \right\}, \quad
  \partial_p^{\infty} \ell(0, 0) = \left( \begin{smallmatrix} 0 \\ 0 \\ -1 \\ 1 \end{smallmatrix} \right)
  + \left\{ \left( \begin{smallmatrix} 0 \\ 0 \\ t \\ - t \end{smallmatrix} \right),
  \left( \begin{smallmatrix} 0 \\ 0 \\ t \\ t \end{smallmatrix} \right) \bigg| t \in \xR \right\}.
$$
Therefore, for the function $p_*(x) \equiv 0$ one has
$$
  p_*'(x) \in \co\big\{ w \in \xR^2 \bigm| (w, p_*(x)) \in \partial_p^{\infty} L(u_*(x), u_*'(x)) \big\} = 
  \co\left\{ \left( \begin{smallmatrix} 1 \\ 1 \end{smallmatrix} \right),
  \left( \begin{smallmatrix} 1 \\ -1 \end{smallmatrix} \right),
  \left( \begin{smallmatrix} -1 \\ 1 \end{smallmatrix} \right),
  \left( \begin{smallmatrix} -1 \\ -1 \end{smallmatrix} \right) \right\}
  \quad \forall x \in [0, 1]
$$
and $(p_*(0), -p_*(1)) \in \partial_p^{\infty} \ell(u_*(0), u_*(1))$. Furthermore, one also has
$$
  L(u_*(x), y) = \frac{1}{2} \big( (y_1)^2 + (y_2)^2 \big) \ge 0 
  = L(u_*(x), u_*'(x)) + \langle p_*(x), y - u_*'(x) \rangle \quad \forall y = (y_1, y_2) \in \xR^2.
$$
Thus, optimality conditions in terms of the limiting proximal subdifferential \cite[Thrm.~1]{IoffeRockafellar} are
satisfied at $u_*$ as well. In addition, one can check that the limiting proximal subdifferentials 
$\partial_p^{\infty} L(0, 0)$ and $\partial_p^{\infty} l(0, 0)$ coincide with the corresponding limiting Fr\'{e}chet
subdifferentials (which, in turn, coincide with the Mordukhovich basic subdifferentials by
\cite[Thrm.~1.89]{Mordukhovich_I}), which implies that the optimality conditions in terms of the limiting Fr\'{e}chet
subdifferential \cite[Thrm.~3.4]{Jourani2009} are satisfied at $u_*$ as well.

Let us finally check optimality conditions in terms of codifferentials from Theorem~\ref{thrm:BoundaryConstrainedProbl}.
Applying the codifferential calculus \cite{DemRub_book} one obtains that
$$
  \underline{d}_{u, \xi} f(x, u, \xi) = \co\Big\{ 
  \Big( (-1)^i u_1 - |u_1| + (-1)^j u_2 - |u_2|, (-1)^i, (-1)^j, \xi_1, \xi_2 \Big) 
  \in \xR \times \xR^2 \times \xR^2 \Bigm| i, j \in \{ 1, 2 \}\Big\}
$$
and $\overline{d}_{u, \xi} f(x, u, \xi) = \{ 0 \}$. One also gets that
$$
  \underline{d} g_0(y, z) 
  = \left\{ \left( \begin{smallmatrix} 0 \\0 \\ 0 \\ -1 \\ 1 \end{smallmatrix} \right) \right\}, 
  \quad \overline{d} g_0(y, z) = \{ 0 \}, \quad
  \underline{d} g_1(y, z) 
  = \co\left\{ \left( \begin{smallmatrix} \pm z_1 - |z_1| \\ 0 \\ 0 \\ \pm 1 \\ 0 \end{smallmatrix} \right) \right\}, 
  \quad \overline{d} g_1(y, z) 
  = \co\left\{ \left( \begin{smallmatrix} \pm z_2 + |z_2| \\ 0 \\ 0 \\ 0 \\ \pm 1 \end{smallmatrix} \right) \right\}, 
$$
for all $y = (y_1, y_2) \in \xR^2$ and $z = (z_1, z_2) \in \xR^2$. Thus, the integrand $f$ satisfies the
codifferentiability conditions of order $p = 2$, the functions $g_0$ and $g_1$ are continuously codifferentiable, and
the sets $\underline{d} g_1(u_*(0), u_*(1))$ and $\overline{d} g_1(u_*(0), u_*(1))$ are polytopes. Furthermore, observe
that for vectors $\eta_1 = (0, 0, 0, 1, 0) \in \underline{d} g_1(u_*(0), u_*(1))$ and 
$\eta_2 = (0, 0, 0, 0, 1) \in \overline{d} g_1(u_*(0), u_*(1))$ one has 
$0 \notin \underline{d} g_1(u_*(0), u_*(1)) + \eta_2$
and $0 \notin \eta_1 + \overline{d} g_1(u_*(0), u_*(1))$, i.e. the constraint qualification from
Theorem~\ref{thrm:BoundaryConstrainedProbl} is satisfied at $u_*$ (see~Remark~\ref{rmrk:BoundaryConstraints}). 

Suppose that optimality conditions from Theorem~\ref{thrm:BoundaryConstrainedProbl} are satisfied at $u_*$. Then there
exist an absolutely continuous function $\zeta \in \xWsp^{1, 2}((0, 1); \xR^2)$ and
$\underline{\mu}_1, \overline{\mu}_1 \ge 0$ such that
$$
  (0, \zeta'(x), \zeta(x)) \in \underline{d}_{u, \xi} f(x, u_*(x), u_*'(x)) = 
  \co\Big\{ \Big( 0, (-1)^i, (-1)^j, 0, 0 \Big) \in \xR \times \xR^2 \times \xR^2 \Bigm| i, j \in \{ 1, 2 \}\Big\}
$$
for a.e. $x \in (0, 1)$, and the transversality condition
$$
  (0, \zeta(0), - \zeta(1)) \in \nabla g_0(u_*(0), u_*(1)) 
  + \underline{\mu}_1 \big( \underline{d} g_1(u_*(0), u_*(1)) + \eta_2 \big)
  - \overline{\mu}_1 \big( \eta_1 + \overline{d} g_1(u_*(0), u_*(1)) \big).
$$
holds true. Therefore $\zeta(x) \equiv 0$ and the transversality condition takes the form
$$
  0 \in \begin{pmatrix} -1 \\ 1 \end{pmatrix} + 
  \underline{\mu}_1 \co\left\{ \begin{pmatrix} 1 \\ 1 \end{pmatrix}, \begin{pmatrix} - 1 \\ 1 \end{pmatrix} \right\}
  - \overline{\mu}_1 \co\left\{ \begin{pmatrix} 1 \\ 1 \end{pmatrix}, \begin{pmatrix} 1 \\ -1 \end{pmatrix} \right\}
$$
or, equivalently,
$$
  -1 - \underline{\mu}_1 - \overline{\mu}_1 \le 0 \le -1 + \underline{\mu}_1 - \overline{\mu}_1, \quad
  1 + \underline{\mu}_1 - \overline{\mu}_1 \le 0 \le 1 + \underline{\mu}_1 + \overline{\mu}_1.
$$
The third inequality implies that $1 + \underline{\mu}_1 \le \overline{\mu}_1$, while the second one yields
$1 + \overline{\mu}_1 \le \underline{\mu}_1$. Consequently, 
$2 + \underline{\mu}_1 \le 1 + \overline{\mu}_1 \le \underline{\mu}_1$, which is impossible. Hence optimality
conditions from Theorem~\ref{thrm:BoundaryConstrainedProbl} are not satisfied at $u_*$, and one can conclude that this
point is not an optimal solution of problem \eqref{eq:BoundaryConstr_Example}. Thus, optimality conditions in terms of
codifferentials detect the non-optimality of $u_*$, while optimality conditions in terms of Clarke, limiting proximal,
and limiting Fr\'{e}chet subdifferentials fail to do so.
\end{example}

\subsection{Problems with Isoperimetric Constraints}

Let us now consider problems with isoperimetic inequality constraints. With the use of optimality conditions for general
quasidifferentiable programming problems in Banach spaces \cite{Dolgopolik_MetricReg,Dolgopolik_QuasidiffProg} one can
derive optimality conditions for nonsmooth problems with both isoperimetric equality and inequality constraints.
However, this approach requires the use of constraint qualifications (similar to the ones used in
Theorem~\ref{thrm:BoundaryConstrainedProbl}), whose reformulation in the case of isoperimetric constraints leads to very
cumbersome assumptions, which we do not present here for the sake of shortness.

Consider isoperimetric problem of the form:
\begin{equation} \label{eq:IsoperimetricInequalProbl}
\begin{split}
  &\min \: \mathcal{I}_0(u) = \int_{\Omega} f_0(x, u(x), \nabla u(x)) \, dx \\ 
  &\text{subject to} \quad
  \mathcal{I}_i(u) = \int_{\Omega} f_i(x, u(x), \nabla u(x)) \, dx \le 0, \quad
  u \in u_0 + \xWsp^{1, p}_0(\Omega; \xR^m).
\end{split}
\end{equation}
Here $\Omega \subset \xR^d$ is an open set, 
$f_i \colon \Omega \times \xR^m \times \xR^{m \times d} \to \xR$, $f_i = f_i(x, u, \xi)$, are
nonsmooth functions, $i \in I \cup \{ 0 \}$, $I = \{ 1, \ldots, \ell \}$, and $u_0 \in \xWsp^{1, p}(\Omega; \xR^m)$ is
a fixed function. Denote $I(u) = \{ i \in I \mid \mathcal{I}_i(u) = 0 \}$.

Our aim is to derive optimality conditions for problem \eqref{eq:IsoperimetricInequalProbl} with the use of general
optimality conditions in terms of quasidifferentials for inequality constrained nonsmooth optimisation problem
\cite[Crlr.~5]{Dolgopolik_QuasidiffProg}. It should be noted that these optimality conditions were largely inspired by
B.N. Pschenichny work \cite{Pschenichny} and are derived with the use of the standard trick, which goes back to
Pschenichny, of reducing an inequality constrained optimisation problem to the problem of minimising the nonsmooth
max-envelope of the objective function and constraints.

\begin{theorem} \label{thrm:IsoperimetricInequalProbl}
Let $f_i$, $i \in I \cup \{ 0 \}$, satisfy the codifferentiability conditions of order $p \in [1, + \infty]$, and let
either $1 < p \le + \infty$ or the set $\Omega$ be bounded and have the segment property. Suppose also that $u_*$ is a
locally optimal solution of problem \eqref{eq:IsoperimetricInequalProbl}. 
Let finally $(0, w_{1i}(\cdot), w_{2i}(\cdot))$ be measurable
selections of the multifunctions $\overline{d}_{u, \xi} f_i(\cdot, u_*(\cdot), \nabla u_*(\cdot))$, $i \in I$, such that
there does not exist $\zeta \in \xL^{p'}(\Omega; \xR^{m \times d}; \diverg)$ satisfying the following inclusion 
for a.e. $x \in \Omega$:
\begin{equation} \label{eq:IsoperimIneq_CQ}
  (0, \diverg(\zeta)(x), \zeta(x)) \in 
  \co\big\{ \underline{d}_{u, \xi} f_i(x, u_*(x), \nabla u_*(x)) + (0, w_{1i}(x), w_{2i}(x)) 
  \bigm| i \in I(u_*) \big\}.
\end{equation}
Then for any measurable selection $(0, w_{10}(\cdot), w_{20}(\cdot))$ of the set-valued map 
$\overline{d}_{u, \xi} f_0(\cdot, u_*(\cdot), \nabla u_*(\cdot))$ one can find $\lambda_i \ge 0$, $i \in I$, and 
$\zeta \in \xL^{p'}(\Omega; \xR^{m \times d}; \diverg)$ such that $\lambda_i \mathcal{I}_i(u_*) = 0$ for any 
$i \in I$, and for a.e. $x \in \Omega$ one has
\begin{equation} \label{eq:IsoperimIneq_OptCond}
\begin{split}
  (0, \diverg(\zeta)(x), \zeta(x)) &\in 
  \underline{d}_{u, \xi} f_0(x, u_*(x), \nabla u_*(x)) + (0, w_{10}(x), w_{20}(x)) \\
  &+ \sum_{i = 1}^{\ell} \lambda_i 
  \Big( \underline{d}_{u, \xi} f_i(x, u_*(x), \nabla u_*(x)) + (0, w_{1i}(x), w_{2i}(x)) \Big).
\end{split}
\end{equation}
\end{theorem}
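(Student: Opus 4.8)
The plan is to mirror the proof of Theorem~\ref{thrm:BoundaryConstrainedProbl}: reduce the problem to a quasidifferentiable programming problem on $\xWsp^{1,p}_0(\Omega;\xR^m)$, invoke the abstract optimality conditions for inequality-constrained quasidifferentiable problems from \cite{Dolgopolik_QuasidiffProg,Dolgopolik_MetricReg}, and then translate the resulting abstract inclusion back into the pointwise Euler--Lagrange inclusion \eqref{eq:IsoperimIneq_OptCond} via the definition of the weak divergence. First I would set $h_* = u_* - u_0 \in \xWsp^{1,p}_0(\Omega;\xR^m)$ and introduce $\mathcal{J}_i(h) = \mathcal{I}_i(u_0 + h)$, so that $h_*$ is a local minimiser of $\mathcal{J}_0$ subject to $\mathcal{J}_i(h) \le \theta_i$, $i \in I$. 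By Corollary~\ref{crlr:Quasidiff_ZeroTraceSobolev} and Remark~\ref{rmrk:Quasidiff_ZeroTraceSobolev} each $\mathcal{J}_i$ is quasidifferentiable at $h_*$, with $\underline{\partial}\mathcal{J}_i(h_*)$ and $\overline{\partial}\mathcal{J}_i(h_*)$ given by measurable selections of $\underline{d}_{u,\xi} f_i(\cdot, u_*, \nabla u_*)$ and $\overline{d}_{u,\xi} f_i(\cdot, u_*, \nabla u_*)$ carrying zero first coordinate. I would fix, for each active $i \in I(u_*)$, the superdifferential element $y_i^*$ represented by $(0, w_{1i}(\cdot), w_{2i}(\cdot))$, and for the objective the element $y_0^*$ represented by $(0, w_{10}(\cdot), w_{20}(\cdot))$.

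The key step is to show that \eqref{eq:IsoperimIneq_CQ} implies the Mangasarian--Fromovitz-type constraint qualification $0 \notin \co\{\underline{\partial}\mathcal{J}_i(h_*) + y_i^* \mid i \in I(u_*)\}$ needed to apply the abstract result (this is the inequality-only analogue of the simplified condition in Remark~\ref{rmrk:BoundaryConstraints}(i)). I would argue by contraposition: if $0$ lies in this convex hull, then there are constants $\mu_i \ge 0$ with $\sum_i \mu_i = 1$ and selections $(0, v_{1i}, v_{2i})$ of $\underline{d}_{u,\xi} f_i(\cdot, u_*, \nabla u_*)$ whose weighted sum is the zero functional on $\xWsp^{1,p}_0(\Omega;\xR^m)$. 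Setting $\zeta = \sum_i \mu_i (v_{2i} + w_{2i})$ and using the definition of the weak divergence, one obtains $\diverg \zeta = \sum_i \mu_i (v_{1i} + w_{1i})$, whence $(0, \diverg\zeta(x), \zeta(x)) = \sum_i \mu_i (0, v_{1i} + w_{1i}, v_{2i} + w_{2i})(x) \in \co\{\underline{d}_{u,\xi} f_i(x, u_*, \nabla u_*) + (b_i(x), w_{1i}(x), w_{2i}(x)) \mid i \in I(u_*)\}$ for a.e.\ $x$ (recall $b_i \equiv 0$ and that the selections have zero first coordinate), contradicting \eqref{eq:IsoperimIneq_CQ}.

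Finally I would apply the optimality conditions \cite{Dolgopolik_QuasidiffProg,Dolgopolik_MetricReg}: for the chosen $y_0^*$ there exist $\lambda_i \ge 0$ with $\lambda_i(\mathcal{J}_i(h_*) - \theta_i) = 0$ and subdifferential elements $x_0^* \in \underline{\partial}\mathcal{J}_0(h_*)$, $x_i^* \in \underline{\partial}\mathcal{J}_i(h_*)$ such that $x_0^* + y_0^* + \sum_{i} \lambda_i (x_i^* + y_i^*) = 0$. Writing each $x_0^*$, $x_i^*$ through its selection $(0, v_{10}, v_{20})$, $(0, v_{1i}, v_{2i})$ and setting $\zeta = (v_{20} + w_{20}) + \sum_i \lambda_i (v_{2i} + w_{2i})$, the vanishing of this functional gives $\diverg \zeta = (v_{10} + w_{10}) + \sum_i \lambda_i (v_{1i} + w_{1i})$, with $\zeta \in \xL^{p'}(\Omega; \xR^{m\times d}; \diverg)$ by the growth condition on the codifferential maps. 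The identity $(0, \diverg\zeta, \zeta) = (0, v_{10} + w_{10}, v_{20} + w_{20}) + \sum_i \lambda_i (0, v_{1i} + w_{1i}, v_{2i} + w_{2i})$ is exactly \eqref{eq:IsoperimIneq_OptCond}, while $\lambda_i(\mathcal{J}_i(h_*) - \theta_i) = \lambda_i(\mathcal{I}_i(u_*) - \theta_i) = 0$ yields complementary slackness.

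The main obstacle I anticipate is the translation of the constraint qualification between its abstract dual-space form and the pointwise form \eqref{eq:IsoperimIneq_CQ}. The subtle point is that the abstract convex hull uses \emph{constant} multipliers $\mu_i$, whereas \eqref{eq:IsoperimIneq_CQ} tacitly allows $x$-dependent ones; this is harmless because only the implication ``abstract CQ fails $\Rightarrow$ \eqref{eq:IsoperimIneq_CQ} fails'' is required, and a constant-weight combination is a special case of a pointwise selection. Everything else reduces to routine bookkeeping with measurable selections, the growth conditions of Definition~\ref{def:CodiffCond}, and the definition of weak divergence.
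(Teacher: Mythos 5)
Your proposal is correct and follows essentially the same route as the paper's proof: reduction to a quasidifferentiable program on $\xWsp^{1,p}_0(\Omega;\xR^m)$ via Corollary~\ref{crlr:Quasidiff_ZeroTraceSobolev}, verification by contradiction that \eqref{eq:IsoperimIneq_CQ} implies the abstract constraint qualification $0 \notin \co\{\underline{\partial}\mathcal{J}_i + y_i^*\}$, and translation of the resulting inclusion back to \eqref{eq:IsoperimIneq_OptCond} through the definition of weak divergence and the growth conditions. The only (immaterial) difference is that the paper centres the change of variables at $u_*$ so that $h=0$ is the minimiser, whereas you centre it at $u_0$; your closing remark about constant versus $x$-dependent convex weights is exactly the point the paper's argument relies on.
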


\begin{proof}
For any $h \in \xWsp^{1, p}_0(\Omega; \xR^m)$ define $\mathcal{J}_0(h) = \mathcal{I}_0(u_* + h)$, and
$\mathcal{J}_i(h) = \mathcal{I}_i(u_* + h)$, $i \in I$. By Corollary~\ref{crlr:Quasidiff_ZeroTraceSobolev}
and Remark~\ref{rmrk:Quasidiff_ZeroTraceSobolev} the functions $\mathcal{J}_i$ are correctly defined and
quasidifferentiable at $h = 0$. Moreover, the point $h = 0$ is a locally optimal solution of the problem
$$
  \min_{h \in \xWsp^{1, p}_0(\Omega; \xR^m)} \mathcal{J}_0(h) \quad \text{subject to} \quad
  \mathcal{J}_i(h) \le 0, \quad i \in I,
$$
since $u_*$ is a locally optimal solution of problem \eqref{eq:IsoperimetricInequalProbl}. Hence by applying optimality
conditions for quasidifferentiable programming problems with inequality constraints
\cite[Crlr.~5]{Dolgopolik_QuasidiffProg} one obtains that if $y_i^* \in \overline{\partial} \mathcal{J}_i(0)$, 
$i \in I$, are such that 
\begin{equation} \label{eq:IsoperimIneq_CQ_Abstract}
  0 \notin \co\{ \underline{\partial} \mathcal{J}_i(0) + y_i^* \mid i \in I(u_*) \},
\end{equation}
then for any $y_0^* \in \overline{\partial} \mathcal{J}_0(0)$ one can find $\lambda_i \ge 0$, $i \in I$, such that 
$\lambda_i \mathcal{J}_i(0) = 0$ for any $i \in I$ and
\begin{equation} \label{eq:IsoperimIneq_OptCond_Abstract}
  0 \in \underline{\partial} \mathcal{J}_0(0) + y_0^* 
  + \sum_{i \in I} \lambda_i \big( \underline{\partial} \mathcal{J}_i(0) + y_i^* \big).
\end{equation}
Let us reformulate these optimality conditions in term of problem \eqref{eq:IsoperimetricInequalProbl}.

Fix any $(0, w_{1i}(\cdot), w_{2i}(\cdot))$, $i \in I$, satisfying the assumptions of the theorem. Define
\begin{equation} \label{eq:FunctionalViaSelect_Isoperim}
  \langle y_i^*, h \rangle 
  = \int_{\Omega} \big( \langle w_{1i}(x), h(x) \rangle + \langle w_{2i}(x), \nabla h(x) \rangle \big) \, dx
  \quad \forall h \in \xWsp^{1, p}_0(\Omega; \xR^m).
\end{equation}
Then by Corollary~\ref{crlr:Quasidiff_ZeroTraceSobolev} one has $y_i^* \in \overline{\partial} \mathcal{J}_i(0)$, 
$i \in I$. Let us check that constraint qualification \eqref{eq:IsoperimIneq_CQ_Abstract} holds true. Indeed, arguing by
reductio ad absurdum suppose that condition \eqref{eq:IsoperimIneq_CQ_Abstract} is not satisfied. Then for any $i \in I$
there exist $x_i^* \in \underline{\partial} \mathcal{J}_i(0)$ and $\alpha_i \ge 0$ such that
$$
  \sum_{i = 1}^{\ell} \alpha_i (x_i^* + y_i^*) = 0, \quad \sum_{i = 1}^{\ell} \alpha_i = 1.
$$
Hence with the use of Corollary~\ref{crlr:Quasidiff_ZeroTraceSobolev} one obtains that for any $i \in I$ there exists a
measurable selection $(0, v_{1i}(\cdot), v_{2i}(\cdot))$ of the multifunction
$\underline{d}_{u, \xi} f_i(\cdot, u_*(\cdot), \nabla u_*(\cdot))$ such that
\begin{equation} \label{eq:IsoperimCQ_Rewritten}
  \int_{\Omega} \big( \langle \omega_1(x), h(x) \rangle + \langle \omega_2(x), \nabla h(x) \rangle \big) \, dx = 0
  \quad \forall h \in \xWsp^{1, p}_0(\Omega; \xR^m),
\end{equation}
where
$$
  \omega_1(x) = \sum_{i = 1}^m \alpha_i( v_{1i}(x) + w_{1i}(x) ), \quad
  \omega_2(x) = \sum_{i = 1}^m \alpha_i( v_{12}(x) + w_{12}(x) ).
$$
Equality \eqref{eq:IsoperimCQ_Rewritten} implies that there exists the weak divergence of the function 
$\zeta = \omega_2$, and $\diverg \zeta = \omega_1$. By the growth condition on the codifferential mappings 
$D_{u, \xi} f_i(\cdot)$ (see~Def.~\ref{def:CodiffCond}) one has $\omega_2 \in \xL^{p'}(\Omega; \xR^{m \times d})$ and 
$\omega_1 \in \xL^{p'}(\Omega; \xR^m)$. Thus, there exists $\zeta \in \xL^{p'}(\Omega; \xR^{m \times d}; \diverg)$ such
that
$$
  (0, \diverg(\zeta)(x), \zeta(x)) 
  = \sum_{i = 1}^{\ell} \alpha_i \big( (a_i(x), v_{1i}(x), v_{2i}(x)) + (0, w_{1i}(x), w_{2i}(x)) \big)
$$
for a.e. $x \in \Omega$, which contradicts \eqref{eq:IsoperimIneq_CQ}. Therefore constraint qualification
\eqref{eq:IsoperimIneq_CQ_Abstract} holds true.

Choose any measurable selection $(0, w_{10}(\cdot), w_{20}(\cdot))$ of the set-valued mapping 
$\overline{d}_{u, \xi} f_0(\cdot, u_*(\cdot), \nabla u_*(\cdot))$. Define linear functional $y_0^*$ in the same way
as in \eqref{eq:FunctionalViaSelect_Isoperim}. Then by Corollary~\ref{crlr:Quasidiff_ZeroTraceSobolev} one has 
$y_0 \in \overline{\partial} \mathcal{J}_0(0)$. Consequently, there exist $\lambda_i \ge 0$, $i \in I$, such that 
$\lambda_i \mathcal{J}_i(0) = \lambda_i \mathcal{I}_i(u_*) = 0$ for any $i \in I$ and
\eqref{eq:IsoperimIneq_OptCond_Abstract} holds true. Now, arguing in the same way as in the proof of
Theorem~\ref{thrm:UnconstrainedCalcVarProbl} one can readily verify that optimality condition
\eqref{eq:IsoperimIneq_OptCond_Abstract} is equivalent to \eqref{eq:IsoperimIneq_OptCond}.
\end{proof}

Let us give an example illustrating optimality conditions for isoperimetric problems from the theorem above.

\begin{example} \label{example:IsoperimetricProblem}
Let $d = m = p = 1$ and $\Omega = (0, 1)$. Consider the following problem:
\begin{equation} \label{eq:IsoperimetricProb_Example}
\begin{split}
  &\min \: \mathcal{I}_0(u) = \int_0^1 \max\big\{ - |u(x)|, - |u'(x)| \big\} \, dx \\
  &\text{subject to } \mathcal{I}_1(u) = \int_0^1 u(x) \, dx \le 0, \quad
  u(0) = u(1) = 0, \quad u \in \xWsp^{1, 1}(0, 1).
\end{split}
\end{equation}
In this case $f_0(x, u, \xi) = \max\{ -|u|, -|\xi| \}$, $I = \{ 1 \}$ and $f_1(x, u, \xi) = u$. Let us
check whether optimality conditions are satisfied at $u_* \equiv 0$. It is easily seen that this function is not a
locally optimal solution of problem \eqref{eq:IsoperimetricProb_Example}, since 
for the function $u_{\alpha}(x) = \alpha x(x - 1)$ one obviously has $\mathcal{I}_0(u_{\alpha}) < 0$ and
$\mathcal{I}_1(u_{\alpha}) = - \alpha / 6 < 0$ for any $\alpha > 0$. In actuality, $u_*$ is a point of unconstrained
global maximum of $\mathcal{I}_0(u)$.

To the best of the author's knowledge, optimality conditions for nonsmooth variational problems with isoperimetric
constraints have been obtained earlier only in \cite[Thrm.~3.5.1]{Bellaassali}. Let us verify whether these optimality
conditions hold true at $u_*$. The limiting Fr\'{e}chet subdifferential of the function $u \mapsto f_0(x, u, \xi)$ with
respect to $\xi$ (see~\cite{JouraniThibault}) at the point $(x, 0, 0)$, which we denote by 
$\partial_{F, u}^{\infty} f_0(x, \cdot)(0, 0)$, is equal to $[-1, 1]$. Therefore, for the function $p(\cdot) \equiv 0$
and for all $x \in [0, 1]$ one has
$$
  p'(x) \in \partial_{F, u}^{\infty} f_0(x, \cdot)(u_*(x), u_*'(x)), \quad
  p(x) u_*'(x) - f_0(x, u_*(x), u_*'(x)) = 0 
  = \max_{v \in \xR} \big( p(x) v - f_0(x, u_*(x), v) \big),
$$
that is, the optimality conditions from \cite[Thrm.~3.5.1]{Bellaassali} are satisfied for $p(\cdot) \equiv 0$, 
$\lambda = 1$, and $\gamma = 0$.

To apply other optimality condition to problem \eqref{eq:IsoperimetricProb_Example}, one needs to transform this
problem to an equivalent one without isoperimetric constraints. Such transformation can be done in many different ways.
Following \cite[Example~4.5.4]{Clarke} we can reformulate problem \eqref{eq:IsoperimetricProb_Example} as the following
Mayer problem with nonholonomic inequality constraints:
\begin{equation} \label{eq:IsoperimetricExample_Clarke}
\begin{split}
  &\min f(x(1)) = x_3(1) \quad \text{subject to} \quad  x(0) = 0, \quad x_1(1) = 0, \quad x_2(1) \le 0,\\
  &\varphi_1(x(t), \dot{x}(t)) = x_1(t) - \dot{x}_2(t) \le 0, \quad
  \varphi_2(x(t), \dot{x}(t)) = \max\{ -|x_1(t)|, -|\dot{x}_1(t)| \} - \dot{x}_3(t) \le 0, \quad t \in [0, 1].
\end{split}
\end{equation}
Let us verify optimality conditions for this problem \cite[Corollary~4.5.1]{Clarke} at the point $x_* \equiv 0$,
which corresponds to the point $u_* \equiv 0$ in problem \eqref{eq:IsoperimetricProb_Example}. Indeed, the Clarke
subdifferentials of the functions $\varphi_i$ at the origin have the form:
$$
  \partial_{Cl} \varphi_1(0) = \{ (1, 0, 0, 0, -1, 0) \}, \quad
  \partial_{Cl} \varphi_2(0) = \co\Big\{ \big( \pm 1, 0, 0, 0, 0, -1 \big), \big( 0, 0, 0, \pm 1, 0, -1 \big) \Big\}.
$$
As is readily seen, the constraint qualifications from \cite[Corollary~4.5.1]{Clarke} is satisfied at $x_*$. Note also
that the Clarke normal cone to the set $S = \{ (x_1, x_2, x_3) \in \xR^3 \mid x_1 = 0, x_2 \le 0 \}$ at the origin has
the form $N_S(0) = \{ (t, s, 0) \in \xR^3 \mid t \in \xR, s \ge 0 \}$. Therefore, for $p(t) \equiv (0, 0, -1)$,
$\lambda_1(t) \equiv 0$, $\lambda_2(t) \equiv 1$, and $\lambda_0 = 1$ one has
$$
  (\dot{p}(t), p(t)) \in \lambda_1(t) \varphi_1(x_*(t), \dot{x}_*(t)) + 
  \lambda_2(t) \varphi_2(x_*(t), \dot{x}_*(t)) \quad \forall t \in [0, 1]
$$
and $-p(1) \in \lambda_0 \partial_{Cl} f(x_*(1)) + N_S(x_*(1))$. Thus, optimality conditions for problem
\eqref{eq:IsoperimetricExample_Clarke} in terms of the Clarke subdifferential \cite[Corollary~4.5.1]{Clarke} are
satisfied at the point $x_*(\cdot) = 0$.

Problem \eqref{eq:IsoperimetricProb_Example} can also be rewritten as the following nonsmooth optimal control problem:
\begin{align*}
  &\min \: \mathcal{J}(x, u) = \int_0^1 \max\big\{ - |x_1(t)|, - |u(t)| \big\} \, dt \quad \text{subject to} \\
  &\dot{x}_1(t) = u(t), \quad \dot{x}_2(t) = x_1(t), \quad u(t) \in \xR, 
  \quad t \in [0, 1],\quad x_1(0) = x_1(1) = 0, \quad x_2(0) = 0, \quad x_2(1) \le 0.
\end{align*}
One can verify that the pair $(x_*, u_*) \equiv (0, 0)$ satisfies various existing optimality conditions for this
problem in terms of subdifferentials and normal cones
\cite{Clarke,Mordukhovich_II,Jourani2009,Loewen,Vinter,Clarke2005,ClarkeDePinho2009,Clarke2013,Ioffe2016,Polovinkin2015,
Polovinkin2018,Polovinkin2019}. We leave the laborious task of verifying these conditions to the interested reader.
Instead, let us check here whether optimality conditions from Theorem \ref{thrm:IsoperimetricInequalProbl} are satisfied
at $u_*$. 

The function $f_0$ can be rewritten as 
$$
  f_0(x, u, \xi) = \max\{ |u|, |\xi| \} - |u| - |\xi|
  = \max\{ u, -u, \xi, -\xi \} + \min\{ u, - u \} + \min\{ \xi, - \xi \}. 
$$
Hence with the use of the codifferential calculus \cite{DemRub_book} one gets that
$$
  \underline{d}_{u, \xi} f_0(x, u, \xi) = \co\left\{ 
  \left( \begin{smallmatrix} \pm u - g(u, x) \\ \pm1 \\ 0 \end{smallmatrix} \right),
  \left( \begin{smallmatrix} \pm \xi - g(u, x) \\ 0 \\ \pm1 \end{smallmatrix} \right) \right\},
  \overline{d}_{u, \xi} f_0(x, u, \xi) = \co\left\{ 
  \left( \begin{smallmatrix} (-1)^i u + (-1)^j \xi + |u| + |\xi| \\ (-1)^i \\ (-1)^j \end{smallmatrix} \right)
  \Bigm| i, j \in \{ 1, 2 \} \right\},
$$
where $g(u, \xi) = \max\{ |u|, |\xi| \}$, while $\underline{d}_{u, \xi} f_1(x, u, \xi) = \{ (0, 1, 0) \}$ and
$\overline{d}_{u, \xi} f_1(x, u, \xi) = \{ 0 \}$. Therefore, as one can readily see, both functions $f_0$ and $f_1$
satisfy the codifferentiability conditions of order $p = 1$. The set $\Omega = (0, 1)$ is obviously bounded and has the
segment-property. Moreover, if for some $\zeta \in \xL^{\infty}((0, 1); \xR, \diverg) = \xWsp^{1, \infty}(0, 1)$ one has
$(0, \zeta'(x), \zeta(x)) \in \underline{d}_{u, \xi} f_1(x, u_*'(x), u_*(x)) = \{ (0, 1, 0) \}$ for a.e. 
$x \in (0, 1)$, then $\zeta(x) = 0$, while $\zeta'(x) = 1$ for a.e. $x \in (0, 1)$, which is impossible. Thus,
constraint qualification \eqref{eq:IsoperimIneq_CQ} holds true at $u_*$.

Suppose that optimality conditions from Theorem \ref{thrm:IsoperimetricInequalProbl} are satisfied at $u_*$. Then for
any measurable selection $(0, w_{10}(\cdot), w_{20}(\cdot))$ of the 
set-valued map $\overline{d}_{u, \xi} f_0(\cdot, u_*(\cdot), u_*'(\cdot))$ there exist $\lambda_1 \ge 0$ and 
$\zeta \in \xWsp^{1, \infty}(0, 1)$ such that for a.e. $x \in (0, 1)$ one has
$$
  (0, \zeta'(x), \zeta(x)) \in 
  \underline{d}_{u, \xi} f_0(x, u_*(x), u_*'(x)) + (0, w_{10}(x), w_{20}(x))
  +  \lambda_1 \underline{d}_{u, \xi} f_1(x, u_*(x), u_*'(x)).
$$
Define $z_0(x) = (0, w_{10}(x), w_{20}(x)) = (0, 1, 1) \in \overline{d} f_0(x, 0, 0)$, if $x \in [0, 0.5]$ and
$z_0(x) = (0, 1, -1)  \in \overline{d} f_0(x, 0, 0)$, if $x \in (0.5, 1]$. Then there exist $\lambda_1 \ge 0$ and 
$\zeta \in \xWsp^{1, \infty}(0, 1)$ such that
\begin{align} \label{eq:OptCond_IsoperimetricExample_1}
  \left( \begin{smallmatrix} 0 \\ \zeta'(x) \\ \zeta(x) \end{smallmatrix} \right) &\in
  \co\left\{ \left( \begin{smallmatrix} 0 \\ 2 + \lambda_1 \\ 1 \end{smallmatrix} \right),
  \left( \begin{smallmatrix} 0 \\ \lambda_1 \\ 1 \end{smallmatrix} \right),
  \left( \begin{smallmatrix} 0 \\ 1 + \lambda_1 \\ 0 \end{smallmatrix} \right),
  \left( \begin{smallmatrix} 0 \\ 1 + \lambda_1 \\ 2 \end{smallmatrix} \right) \right\}
  \quad \text{for a.e. } x \in [0, 0.5], \\
  \left( \begin{smallmatrix} 0 \\ \zeta'(x) \\ \zeta(x) \end{smallmatrix} \right) &\in
  \co\left\{ \left( \begin{smallmatrix} 0 \\ 2 + \lambda_1 \\ -1 \end{smallmatrix} \right),
  \left( \begin{smallmatrix} 0 \\ \lambda_1 \\ -1 \end{smallmatrix} \right),
  \left( \begin{smallmatrix} 0 \\ 1 + \lambda_1 \\ 0 \end{smallmatrix} \right),
  \left( \begin{smallmatrix} 0 \\ 1 + \lambda_1 \\ -2 \end{smallmatrix} \right) \right\}
  \quad \text{for a.e. } x \in (0.5, 1]. \label{eq:OptCond_IsoperimetricExample_2}
\end{align}
Consequently, $\zeta'(x) \ge 0$ for a.e. $x \in (0, 1)$, $\zeta(x) \ge 0$ for a.e. $x \in (0, 0.5)$, and
$\zeta(x) \le 0$ for a.e. $x \in (0.5, 1)$. Redefining, if necessary, the function $\zeta$ on a set of measure zero one
can suppose that $\zeta$ is Lipschitz continuous (see, e.g.~\cite[Thrm.~7.17]{Leoni}). Therefore, from the inequality 
$\zeta'(\cdot) \ge 0$ it follows that the function $\zeta$ is non-decreasing. Hence with the use of the
inequalities $\zeta(x) \ge 0$ for a.e. $x \in (0, 0.5)$ and $\zeta(x) \le 0$ for a.e. $x \in (0.5, 1)$ one obtains that 
$\zeta(x) \equiv 0$ and $\zeta'(x) \equiv 0$, which contradicts the fact that the zero vector does not belong to 
the right-hand sides of \eqref{eq:OptCond_IsoperimetricExample_1} and \eqref{eq:OptCond_IsoperimetricExample_2}.
Thus, optimality conditions from Theorem \ref{thrm:IsoperimetricInequalProbl} are not satisfied at $u_*$, and once again
optimality conditions in terms of codifferentials were able to detect the non-optimality of the point $u_*$, when
subdifferential-based optimality conditions failed to do so.
\end{example}

\section{Conclusions}

In this paper we presented a general theory of first order necessary optimality conditions for nonsmooth
multidimensional problems of the calculus of variations on arbitrary (not necessarily bounded) domains.  This theory is
based on the concepts of codifferentiability and quasidifferentiability of nonsmooth functions developed in the finite
dimensional case by Demyanov, Rubinov, and Polyakova (see~\cite{DemyanovDixon,DemRub_book,Quasidifferentiability_book}).
We proved that a nonsmooth integral functional defined on the Sobolev space is continuously codifferentiable and
computed its codifferential and  quasidifferential under the assumption that the integrand satisfies the
codifferentiability conditions introduced in this paper. These conditions, in essence, mean that the integrand is
continuously codifferentiable and satisfies, along with its codifferential, some natural growth conditions. In
comparison with our previous paper \cite{Dolgopolik_CalcVar}, in this work we proved the codifferentiability of the
integral functional without the assumption that the domain of integration is bounded and has the segment property
(provided $p > 1$), demonstrated that the obscure and hard to verify assumption on \textit{uniform codifferentiability
with respect to the Sobolev space} is completely redundant (thus, giving a positive answer to the second question raised
in \cite[Remark~4.22]{Dolgopolik_CalcVar}), and proved the \textit{continuous} codifferentiability of the integral
functional for all $1 \le p \le + \infty$ (in \cite{Dolgopolik_CalcVar} the continuity of the codifferential mapping
was proved only in the case $p = + \infty$).

The explicit expressions for a codifferential and a quasidifferential of the integral functional obtained in this
article allowed us to apply general necessary optimality conditions for constrained nonsmooth optimisation problems in
Banach spaces in terms of quasidifferentials \cite{Dolgopolik_QuasidiffProg,Dolgopolik_MetricReg} to easily obtain
necessary optimality conditions for constrained nonsmooth problems of the calculus of variations, including problems
with additional constraints at the boundary and problems with isoperimetric constraints. As is demonstrated by a series
of simple examples, our optimality conditions are sometimes better than the existing ones in terms of various
subdifferentials, since they are able to detect the non-optimality of a given point, when subdifferential-based
optimality conditions fail to disqualify this point as non-optimal.

\bibliographystyle{abbrv}  
\bibliography{NonsmoothCalcVar_bibl}

\end{document}